\theoremstyle{definition}
\newtheorem{theorem}{Theorem}[section]
\newtheorem{proposition}[theorem]{Proposition}
\newtheorem{lemma}[theorem]{Lemma}
\newtheorem{corollary}[theorem]{Corollary}
\newtheorem{definition}[theorem]{Definition}
\newtheorem{example}{Example}
\newtheorem{remark}[theorem]{Remark}
\newtheorem*{notation}{Notation}
\newtheorem{question}{Question}
\newtheorem{problem}{Problem}
\def\floor#1{\left\lfloor{#1}\right\rfloor}
\def\Aut{\mathrm{Aut}}
\def\PSL{\mathrm{PSL}}
\def\RR{\mathbb{R}}
\def\CC{\mathbb{C}}
\def\HH{\mathbb{H}}
\def\NN{\mathbb{N}}
\def\ZZ{\mathbb{Z}}
\def\Gcal{\mathcal{G}}
\def\Ocal{\mathcal{O}}
\def\KK{\mathbb{K}}
\def\th{\vartheta}
\def\SL{\mathrm{SL}}
\def\gl{\mathfrak{gl}}
\def\GL{\mathrm{GL}}
\def\PGL{\mathrm{PGL}}
\def\PSU{\mathrm{PSU}}
\def\U{\mathrm{U}}
\def\SU{\mathrm{SU}}
\def\SO{\mathrm{SO}}
\def\psl{\mathfrak{sl}}
\def\u{\mathfrak{u}}
\def\tr{\mathrm{Tr}}
\def\Sym{\mathrm{Sym}}
\def\Ycal{\mathcal{Y}}
\def\Rep{\mathrm{Rep}}
\def\Higgs{\mathrm{Higgs}}
\def\Hom{\mathrm{Hom}}
\def\rk{\mathrm{rk}}
\def\End{\mathrm{End}}
\def\ord{\mathrm{ord}}
\def\Res{\mathrm{Res}}
\def\IM{\mathrm{Im}}
\def\id{\mathrm{Id}}
\def\bm#1{\text{\boldmath${#1}$}}
\def\ol#1{\overline{#1}}
\def\os#1#2{\overset{#1}{#2}}
\def\ul#1{\underline{#1}}
\def\ti#1{\tilde{#1}}
\def\wti#1{\widetilde{#1}}
\def\e{\varepsilon}
\def\co{{c}}
\def\Co{\bm{\co}}
\def\bdr#1{{#1}}
\def\bco{\bdr{\co}}
\def\bCo{\bdr{\Co}}
\def\coa{\mathfrak{c}}
\def\Coa{\bm{\coa}}
\def\bcoa{\bdr{\coa}}
\def\bCoa{\bdr{\Coa}}
\def\RRR#1{\os{\frown}{#1}}
\def\RRep{\RRR{\Rep}}
\def\Rhol{\RRR{\hol}}
\def\RRHiggs{\RRR{\Higgs}}
\def\Bun{\mathrm{Bun}}
\def\Ecal{\mathcal{E}}
\def\Vcal{\mathcal{V}}
\def\ev{\mathrm{ev}}
\def\Eig{\mathrm{Eig}}
\def\lra{\longrightarrow}
\def\rar{\rightarrow}
\def\Ad{\mathrm{Ad}}
\def\hol{\mathrm{hol}}
\def\Pic{\mathrm{Pic}}
\def\even{^{ev}}
\def\odd{^{odd}}
\def\dev{\mathrm{dev}}
\def\gfrak{\mathfrak{g}}
\def\PP{\mathbb{P}}
\def\pa{\partial}
\def\Ad{\mathrm{Ad}}
\def\hra{\hookrightarrow}
\def\arr#1{\stackrel{#1}{\lra}}
\def\Gr{\mathrm{Gr}}
\def\Det{\mathcal{D}}
\def\Eu{\mathrm{eu}}
\def\Y{\mathcal{Y}}
\def\0{\bm{0}}
\def\Iso{\mathrm{Iso}}
\def\rot{\mathrm{rot}}
\def\dis{\displaystyle}
\def\Flat{\mathrm{Fl}}
\def\Qcal{\mathcal{Q}}
\def\Xcal{\mathcal{X}}
\def\Rcal{\mathcal{R}}
\def\CL#1{\mathrm{Cl}({#1})}
\def\a{\alpha}
\def\b{\beta}
\def\NO#1{\|{#1}\|_1}
\begin{document}

\title[Representations of punctured surface groups in $\PSL_2(\RR)$]{Topology of representation spaces of surface groups in $\PSL_2(\RR)$ with assigned boundary monodromy and nonzero Euler number}

\begin{abstract}
In this paper we complete the topological description
of the space of representations of the fundamental group of a punctured surface
in $\SL_2(\RR)$ with prescribed behavior at the punctures and nonzero Euler number,
following the strategy employed by Hitchin in the unpunctured case and exploiting
Hitchin-Simpson correspondence between flat bundles
and Higgs bundles in the parabolic case. This extends previous results by
Boden-Yokogawa and Nasatyr-Steer. A relevant portion of the paper is intended
to give an overview of the subject.
%
%
\end{abstract}

\author{Gabriele Mondello}
\email{mondello@mat.uniroma1.it}
\address{``Sapienza'' Universit\`a di Roma, Dipartimento di Matematica ``Guido Castelnuovo'' - piazzale Aldo Moro 5 - 00185 Roma - Italy}

\maketitle

\tableofcontents

\setlength{\parindent}{0pt}
\setlength{\parskip}{0.2cm}

\section{Introduction}

Representations $\rho:\pi_1(M)\rar G$
of the fundamental group of of a manifold $M$ inside a Lie group $G$
naturally arise as monodromies of $(G,G/H)$-geometric structures \`a la Ehresmann
\cite{ehresmann:infinitesimales} \cite{ehresmann:locales}
on $M$ (see also \cite{goldman:geometric-structures} and \cite{goldman:locally-homogeneous-manifolds}). 

From a differential-geometric point of view, the datum of such a representation
is equivalent to that of a flat principal $G$-bundle on $M$,
or of a vector bundle of rank $N$ endowed with a flat connection and with monodromy in $G$,
in case $G\subset\GL_N$ is a linear group:
flatness is somehow the counterpart of the homegeneity of $G/H$.

Conversely, a way of ``understanding'' such a representation
$\rho$ is to {\it{geometrize}} it, namely to find a geometric structure on $M$
with monodromy $\rho$.
%
%
%
%
%
%
%
%
%
%


\subsection{Closed surfaces}

Let $S$ be a compact connected oriented surface of genus $g(S)\geq 2$.

\subsubsection{Hyperbolic structures}
A remarkable example of geometric structure on $S$ is given by hyperbolic structures, that is
hyperbolic metrics on $S$ up to isotopy.
Indeed, a hyperbolic metric is locally isometric to the upper half-plane $\HH^2$,
and so it induces a $(\PSL_2(\RR),\HH^2)$-structure on $S$ with monodromy representation
$\rho:\pi_1(S)\rar\PSL_2(\RR)\cong\mathrm{Isom}_+(\HH^2)$.
A result credited to Fricke-Klein \cite{fricke-klein:automorphen} (see also Vogt \cite{vogt:invariants})
states that hyperbolic structures
on $S$ are in bijective correspondence with a connected component
of the space $\Rep(S,G)$ of conjugacy classes of representations $\pi_1(S)\rar G$ with $G=\PSL_2(\RR)$.

\subsubsection{Euler number of a representation in $\PSL_2(\RR)$}
The connected components of the whole space $\Rep(S,\PSL_2(\RR))$
can be classified according to a topological
invariant of the $\RR\PP^1$-bundle over $S$ associated to each such representation $\rho$: the Euler number $\Eu(\rho)\in\ZZ$. 
The bound $|\Eu(\rho)|\leq -\chi(S)$ was proven
by Milnor \cite{milnor:inequality} and Wood \cite{wood:milnor}; then Goldman \cite{goldman:components} showed
that each admissible value corresponds exactly to a connected component of the representation space
and that monodromies of hyperbolic structures correspond
to the component with $\Eu=-\chi(S)$.
It is easy to see that $\rho:\pi_1(S)\rar\PSL_2(\RR)$
can be lifted to $\SL_2(\RR)$ if and only if $\Eu(\rho)$ is even.

\begin{remark}
More refined invariants of a representation are given by {\it{bounded}} characteristic classes.
The bounded Euler class for topological $S^1$-bundles was investigated by
Matsumoto \cite{matsumoto:foliated} and 
the analogous Toledo invariant for $G/H$ of Hermitian type
by Toledo \cite{toledo:complex}.
Bounded Euler and Toledo classes were used by
Burger-Iozzi-Wienhard \cite{burger-iozzi-wienhard:maximal} to characterize maximal representations.
\end{remark}

\subsubsection{Local study of the representation spaces}
Traces of a local study of $\Rep(S,G)$ are already in Weil \cite{weil:discrete-1} \cite{weil:discrete-2}.
A more general treatment of the tangent space at a point $[\rho]$ and the determination
of the smooth locus of $\Rep(S,G)$ can be found in
Goldman \cite{goldman:symplectic}, Lubotzky-Magid \cite{lubotzky-magid:representations}
and in the lectures notes \cite{goldman:representations-notes} by Goldman and \cite{labourie:book} by Labourie.
A deeper analysis of the singularities of such moduli space can be found in Goldman-Millson \cite{goldman-millson:deformation}.

\subsubsection{Symplectic structure on the representation space}
When $G$ is reductive,
a natural symplectic structure on the smooth locus of $\Rep(S,G)$ is defined by Atiyah-Bott \cite{atiyah-bott:yang-mills} by
using the equivalence between representations of the fundamental group of $S$ in $G$
and flat $G$-bundles on $S$. 

In the case of the Fricke-Klein component of $\Rep(S,\PSL_2(\RR))$,
such a symplectic structure was seen by Goldman \cite{goldman:symplectic} to agree with
the Hermitian pairing defined by Weil \cite{weil:modules} using Petersson's work
\cite{petersson:pairing} on automorphic forms. 
Ahlfors \cite{ahlfors:kaehler} showed that such a Weil-Petersson pairing
defines a K\"ahler form,
which is rather ubiquitous when dealing with deformations of hyperbolic structures
(see for instance \cite{wolpert:symplectic}, \cite{sozen-bonahon:weil-petersson},
\cite{bonsante-mondello-schlenker:cyclic2}).

\subsubsection{Flat unitary bundles and holomorphic bundles}
Consider the case $G=\U_N$ and fix a complex structure $I$ on $S$.
Representations of $\pi_1(S)$ in the unitary group $\U_N$ were object of a classical theorem
by Narasimhan-Seshadri \cite{narasimhan-seshadri:unitary}, in which 
a real-analytic correspondence is
established between irreducible representations $\pi_1(S)\rar\U_N$ and
stable holomorphic vector bundles of rank $N$ and degree $0$ on the Riemann surface $(S,I)$.
One direction is easy, since every flat complex bundle
is $I$-holomorphic; for the other direction, the authors show that stable bundles of degree $0$ admit a flat Hermitian metric: their argument works by continuity method;
an analytic proof of this statement was found later
by Donaldson \cite{donaldson:narasimhan-seshadri}
proving the convergence of the hermitian Yang-Mills flow,
as suggested in the fundamental work of Atiyah-Bott \cite{atiyah-bott:yang-mills}.

\subsubsection{Flat bundles and Higgs bundles}
The celebrated paper \cite{hitchin:self-duality} by Hitchin treated the case of representations in $G=\SL_2$
and established a real-analytic correspondence between irreducible $\rho:\pi_1(S)\rar \SL_2(\CC)$ and
stable Higgs bundles $(E,\Phi)$, namely holomorphic vector bundles $E\rar (S,I)$ of rank $N$
and trivial determinant endowed with a holomorphic $\End_0(E)$-valued $(1,0)$-form $\Phi$ on $(S,I)$
and subject to a suitable stability condition.
Compared to Narasimhan-Seshadri's, the correspondence is less intuitive, since the holomorphic structure on $E$ does not
agree with the underlying holomorphic structure on the flat complex vector bundle $V\rar S$ 
determined by the representation $\rho$ (coming from the fact that locally constant functions are holomorphic) but
it is twisted: the exact amount of such twisting is determined by the aid of the harmonic metric
on $V$, whose existence was shown by Donaldson \cite{donaldson:twisted}.
For $G=\GL_N$ or $G=\SL_N$,
the existence of the harmonic metric was proven (on any compact manifold) by Corlette \cite{corlette:harmonic} 
(and later Labourie \cite{labourie:harmonic})
and the correspondence (in any dimension) was proven by Simpson \cite{simpson:yang-mills},
who also clarified the general picture by showing \cite{simpson:higgs1} \cite{simpson:higgs2} that the fundamental objects to consider are 
local systems (classified by a ``Betti'' moduli space), vector bundles with a flat connection (classified by a ``de Rham'' moduli space)
and holomorphic Higgs bundles (classified by a ``Dolbeault'' moduli space) and by constructing their moduli spaces.

\subsubsection{Correspondence for $\SL_2(\RR)$}
Back to the rank $2$ case, among the many results contained in \cite{hitchin:self-duality}, Hitchin could determine
which Higgs bundles correspond to monodromies of hyperbolic metrics, thus parametrizing
Teichm\"uller space by holomorphic quadratic differentials on $(S,I)$ and making connection 
with Wolf's result \cite{wolf:teichmuller} (namely, the Higgs field in Hitchin's
work identifies to the Hopf differential of the harmonic map in Wolf's parametrization).
Moreover, the space of isomorphism classes of Higgs bundles $(E,\Phi)$ carries a natural $S^1$-action $u\cdot(E,\Phi)=(E,u\Phi)$,
which is also rather ubiquitous when dealing with harmonic maps with a two-dimensional domain
(for instance \cite{bonsante-mondello-schlenker:cyclic1}); in rank $2$, the locus fixed by the 
$(-1)$-involution $[(E,\Phi)]\leftrightarrow [(E,-\Phi)]$
is identified to the locus of unitary (if $\Phi=0$) or real (if $\Phi\neq 0$) representations.
This allows Hitchin to fully determine the topology
of the connected components of $\Rep(S,\PSL_2(\RR))$ with non-zero Euler number
as that of a complex vector bundle over a symmetric product of copies of $S$.
The real component with Euler number zero seems slightly subtler to treat,
since it contains classes of reducible representations (or, equivalently, of
strictly semi-stable Higgs bundles)
for which the correspondence does not hold.

\subsection{Surfaces with punctures}

Let $S$ be a compact connected oriented surface and let $P=\{p_1,\dots,p_n\}\subset S$
be a subset of $n$ distinct marked points. Denote by $\dot{S}$ the punctured surface
$S\setminus P$ and assume $\chi(\dot{S})<0$.

\subsubsection{Absolute and relative representation space}
The space $\Rep(\dot{S},G)$ of conjugacy classes of representations
$\rho:\pi_1(\dot{S})\rar G$ can be partitioned according to
the boundary behavior of $\rho$.
More explicitly, fix an $n$-uple $\bCo=(\bco_1,\dots,\bco_n)$ of conjugacy classes 
in $G$ and define $\Rep(\dot{S},G,\bCo)$ as the space
of conjugacy classes of representations
$\rho:\pi_1(\dot{S})\rar G$ that send a loop positively winding about the puncture $p_i$
to an element of 
$\bco_i\subset G$.

\subsubsection{Spherical and hyperbolic structures}
Similarly to the case of a closed surface,
isotopy classes of metrics of constant curvature $K$ are the easiest examples of
geometric structures on $\dot{S}$;
a standard requirement is to ask that
the completion of such metrics has either conical singularities or
geodesic boundary of finite length (or cusps, if $K<0$) at the punctures.
Monodromies of spherical structures ($K=1$) naturally 
take values in $\mathrm{PSU}_2\cong\SO_3(\RR)$ 
but can be lifted to $\SU_2$: if $S$ has genus $0$,
such liftability imposes restrictions on the angles of the conical points of a spherical metric
\cite{mondello-panov:spherical}.
Monodromies of hyperbolic structures ($K=-1$)
determine conjugacy classes of representations $\rho:\pi_1(\dot{S})\rar\PSL_2(\RR)$,
which are also liftable to $\SL_2(\RR)$.

\subsubsection{Euler number of a representation in $\PSL_2(\RR)$}
The Euler number of a representation $\rho:\pi_1(\dot{S})\rar\PSL_2(\RR)$
and a generalized Milnor-Wood inequality
$|\Eu(\rho)|\leq -\chi(\dot{S})$ are treated by Burger-Iozzi-Wienhard in \cite{burger-iozzi-wienhard:maximal}, who also show that
all values in the interval $[\chi(\dot{S}),-\chi(\dot{S})]$ are attained
and that representations $\rho$ with $\Eu(\rho)=-\chi(\dot{S})$
correspond to monodromies of complete hyperbolic metrics on $\dot{S}$.
Since $\Eu:\Rep(\dot{S},\PSL_2(\RR))\rar\RR$ is continuous and
its restriction to the locus 
$\Rep(\dot{S},\PSL_2(\RR),\bCo)$ is locally constant,
it is an invariant of the connected components of $\Rep(\dot{S},\PSL_2(\RR),\bCo)$.

\subsubsection{Local structure and Poisson structure}
Similarly to the closed case,
representations $\pi_1(\dot{S})\rar G$
(possibly with prescribed boundary values) correspond to flat $G$-bundles
with the same boundary monodromy; the deformation theory is also analogous.
If $G$ is reductive, a natural Poisson structure \cite{GHJW:poisson} can be defined on the smooth locus of $\Rep(\dot{S},G)$,
which restricts to a symplectic structure on the smooth locus of the spaces $\Rep(\dot{S},G,\bCo)$
(see for instance \cite{mondello:poisson} \cite{mondello:weil-petersson} for 
its link with Weil-Petersson structure when $G=\PSL_2(\RR)$ and
for explicit formulae in the case of surfaces with conical points or with boundary geodesics).

\subsubsection{Flat unitary bundles and holomorphic parabolic bundles}
Unitary representations $\pi_1(\dot{S})\rar\U_N$ determine a complex vector bundle
$\dot{V}\rar\dot{S}$ of rank $N$
endowed with a flat connection and a parallel Hermitian metric.
Such a vector bundle admits a canonical extension $V\rar S$
(Deligne \cite{deligne:equations}),
in such a way that the connection may have at worst simple poles at $P$
with 
eigenvalues of the residues in $[0,1)$
and the natural parallel Hermitian metric $H$ vanishes at $P$ of order in $[0,1)$.
Mehta-Seshadri \cite{mehta-seshadri:parabolic}
introduced the important notion of a parabolic structure on $V$ at $P$,
namely a filtration of the fibers of $V$ over $P$ by order of growth
with respect to $H$,
and established the analogue of Narasimhan-Seshadri's result:
for every complex structure $I$ on $S$, there is
a correspondence between irreducible flat unitary bundles on $\dot{S}$
of rank $N$ with prescribed monodromy at the punctures and
stable holomorphic bundles of rank $N$ and (parabolic) degree $0$ on $(S,I)$
with parabolic structure at $P$ of prescribed type.
As in the closed case, going from a flat bundle to a holomorphic parabolic bundle is easy;
conversely, the existence of a flat Hermitian metric on a stable holomorphic
bundle of degree $0$ with prescribed polynomial growth at the parabolic points $p_i$
was achieved in \cite{mehta-seshadri:parabolic} by continuity method,
and then proved by Biquard \cite{biquard:paraboliques} by analytic techniques.

\subsubsection{Flat bundles and parabolic Higgs bundles}
In a fundamental article \cite{simpson:harmonic}
Simpson established the correspondence between
representations $\rho:\pi_1(\dot{S})\rar \GL_N(\CC)$ with Zariski-dense image
and parabolic $I$-holomorphic vector bundles $E_\bullet$ of rank $N$ and degree $0$
endowed with a Higgs field $\Phi\in H^0(S,K_S\otimes \End(E_\bullet))$
subject to a suitable stability condition, the weights of $E_\bullet$ and the
residues of $\Phi$ at $P$ being determined by the values of $\rho$ on
peripheral loops. The real-analytic nature of Simpson's correspondence
was proven by Konno \cite{konno:construction}
and Biquard-Boalch \cite{biquard-boalch:wild}.
The case of a general algebraic reductive group $G$ was recently
treated by Biquard, Garcia-Prada and Mundet i Riera \cite{biquard:higher}.

\subsubsection{Topological study of moduli spaces of Higgs bundles}
Following Hitchin's ideas,
Boden-Yokogawa \cite{boden-yokogawa:parabolic}
analyzed some aspects of the case of $G=\SL_2(\CC)$
and in particular the Betti numbers of the moduli space using
Morse theory. Their result was then extended by
Logares \cite{logares:U(21)} to the case of $\mathrm{U}(2,1)$-Higgs bundles and by Garc{\'i}a-Prada, Gothen and Mu{\~n}oz
\cite{garcia-prada-gothen-munoz:rank3} to the $\SL_3(\CC)$ and $\GL_3(\CC)$ cases; on the other hand,
Garc{\'i}a-Prada, Logares and Mu{\~n}oz \cite{garcia-prada-logares-munoz:U(pq)} established the a Milnor-Wood inequality 
and determined the connected components of the moduli space of
$\mathrm{U}(p,q)$-Higgs bundles.

A different approach via orbifold structures was taken by
Nasatyr-Steer \cite{nasatyr-steer:orbifold}, who implemented Hitchin's ideas in rank $2$ and
also determined the topology of the relative $\SL_2(\RR)$-representation space
in the case of positive Euler number and elliptic boundary monodromy of finite order.

\subsubsection{Content of the paper}
%
%
After giving an overview of the subject and of the fundamental results of the theory mentioned
above, we focus on the topology of the real locus
of the moduli space of parabolic $\SL_2$-Higgs bundles,
and in particular on what happens as the parabolic structure
degenerates, or equivalently on the topology of
$\SL_2(\RR)$-representation spaces as some of the boundary monodromies cease to be strictly elliptic.\\

Fix a complex structure $I$ on $S$. Given conjugacy classes $\bcoa_i\subset\psl_2(\CC)$,
numbers $w_1(p_i)\in [0,\frac{1}{2}]$ and a line bundle $\Det$ on $S$,
we consider the moduli space $\Higgs^s(S,\bm{w},2,\Det,\bCoa)$ of
stable parabolic Higgs bundles $(E_\bullet,\Phi)$ on $S$ of rank $2$ 
with parabolic weights $w_1(p_1),\,1-w_1(p_i)$ (briefly, of type $\bm{w}$),
$\det(E_\bullet)\cong\Det$
and residue $\Res_{p_i}(\Phi)$ in $\bcoa_i$.

In order to avoid to introduce too much notation at this point,
we prefer not to give here complete statements of the main results
contained in this paper
but rather to list them in an informal way:
\begin{itemize}
\item[(1)]
stable parabolic Higgs bundles in
$\Higgs^s(S,\bm{w},2,\Det,\bCoa)$
that correspond to representations $\rho:\pi_1(\dot{S})\rar \SL_2(\RR)$
with $\Eu(\rho)>0$ are characterized in Lemma \ref{lemma:real} and
Theorem \ref{thm:correspondence};
\item[(2)]
a classification of the connected and the irreducible components
of the locus of $\Higgs^s(S,\bm{w},2,\Det,\bCoa)$ mentioned in (1)
and of its closure
and the determination of their topology is obtained in
Proposition \ref{prop:real} and Proposition \ref{prop:topology};
in particular, these results combine in Theorem \ref{thm:correspondence}
to show that 
the closure (for the classical topology)
of each connected component of $\Rep(\dot{S},\SL_2(\RR),\bCo)$
with $\Eu>0$ 
is homeomorphic 
either to a complex vector space or to an $H^1(S;\ZZ/2\ZZ)$-cover of a
complex vector bundle over a symmetric product of $S\setminus P_{hyp}$
with $P_{hyp}=\{p_i\in P\,|\,\text{$c_i$ hyperbolic}\}$;
\item[(3)]
again by Proposition \ref{prop:topology} and Theorem \ref{thm:correspondence},
the closure of each connected component of $\Rep(\dot{S},\PSL_2(\RR),\bCo)$
with $\Eu>0$ is homeomorphic to a complex vector bundle over
a symmetric product of $S\setminus P_{hyp}$;
\item[(4)]
the connected components
of $\Rep(\dot{S},\PSL_2(\RR),\bCo)$ 	
are classified by their Euler number by Corollary
\ref{cor:euler-components};
\item[(5)]
the connected components of $\Rep(\dot{S},\PSL_2(\RR),\bCo)$
that can host monodromies of hyperbolic structures are determined
in Proposition \ref{prop:uniformization} and their topology
is deduced in Corollary \ref{cor:uniformization}.
\end{itemize}

The precise topological description of such representation
spaces in the punctured case can be compared
to the one in the closed case by looking at
Theorem \ref{thm:rep-closed} and Theorem \ref{thm:rep-punctured}.

\begin{remark}
In the special case of all elliptic boundary monodromies,
the above results (1-4) are already in \cite{boden-yokogawa:parabolic} and,
for elliptic boundary monodromies of finite order, the same topological description
was obtained in \cite{nasatyr-steer:orbifold}.
The case of monodromy representations of hyperbolic structures
with cusps (and so maximal Euler number) was analyzed by
Biswas, Ar{\'e}s-Gastesi and Govindarajan \cite{biswas:parabolic}.
Similarly to what happens with closed surfaces,
the work of Wolf \cite{wolf:infinite-energy} on harmonic maps that ``open the node''
from nodal Riemann surfaces to smooth hyperbolic surfaces 
relates to the case of an $\SL_2(\RR)$-Higgs bundle with
imaginary residues at the punctures.
\end{remark}

As an example of a by-product of our analysis, we describe the topology of components
of the representation space that can contain monodromies of hyperbolic structure
(see also Corollary \ref{cor:uniformization}).

\begin{corollary}[Topology of uniformization irreducible components]\label{cor:intro}
Let $\rho$ be the monodromy representation of a hyperbolic metric on $\dot{S}$
of area $2\pi e>0$
whose completion has conical singularities of angles $\th_1,\dots,\th_k>0$ at $p_1,\dots,p_k$,
cusps at $p_{k+1},\dots,p_r$
and geodesic boundaries of lengths $\ell_{r+1},\dots,\ell_n> 0$.
Let $\bco_i$ be the conjugacy class of the monodromy $\rho$
about the $i$-th end of $\dot{S}$ and let 
$s_0=\#\{i\in\{1,\dots,k\}\,|\,\th_i\in 2\pi\NN_+\}$.\\
Then $[\rho]$ belongs to the irreducible
component of $\Rep(\dot{S},\PSL_2(\RR),\bCo)$
with Euler number $e=-\chi(\dot{S})-\sum_{i=1}^k \frac{\th_i}{2\pi}>0$, and such irreducible component is
real-analytically diffeomorphic to a holomorphic vector bundle of
rank $3g-3+n-m$ over $\mathrm{Sym}^{m-s_0}(S\setminus\{p_{k+1},\dots,p_n\})$,
where $m=\sum_{i=1}^k\floor{\frac{\th_i}{2\pi}}$.
Moreover, its closure $\Rep(\dot{S},\PSL_2(\RR),\ol{\bCo})$
is homeomorphic to a holomorphic vector bundle of
rank $3g-3+n-m$ over $\mathrm{Sym}^{m-s_0}(S\setminus\{p_{r+1},\dots,p_n\})$
\end{corollary}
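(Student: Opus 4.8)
The plan is to deduce this corollary as a direct application of the main structural results (Proposition \ref{prop:topology} and Theorem \ref{thm:correspondence}), whose job is to identify connected/irreducible components of the $\PSL_2(\RR)$-representation space with explicit holomorphic vector bundles over symmetric products. The whole point of the corollary is to translate the geometric data of a hyperbolic cone-metric (angles, cusps, boundary lengths) into the combinatorial/numerical data (weights $\bm{w}$, residue classes $\bCoa$, Euler number $e$, the integers $m$ and $s_0$) that feed into those general theorems. So the real content is a bookkeeping translation plus a careful reading of which punctures sit in $P_{hyp}$ and how the symmetric-product dimension collapses.

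\textbf{Step 1: Geometry-to-monodromy dictionary at each puncture.} First I would record, end by end, the conjugacy class $\bco_i$ of the peripheral monodromy. A conical singularity of angle $\th_i$ contributes an \emph{elliptic} (or possibly central/identity, when $\th_i\in 2\pi\NN_+$) monodromy whose rotation angle is $\th_i$ modulo $2\pi$; a cusp contributes a \emph{parabolic} monodromy; a geodesic boundary of length $\ell_i>0$ contributes a \emph{hyperbolic} monodromy. Hence $P_{hyp}=\{p_{r+1},\dots,p_n\}$ exactly matches the geodesic-boundary ends, which is why the symmetric product in the \emph{closure} statement is taken over $S\setminus\{p_{r+1},\dots,p_n\}$, while the open irreducible component, being the locus of strictly-stable Higgs bundles where parabolic ends also cannot host points of the divisor, uses $S\setminus\{p_{k+1},\dots,p_n\}$. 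The parabolic weights $w_1(p_i)$ and residues $\bcoa_i$ are then the ones forced by Simpson's correspondence (the non-abelian Hodge dictionary recalled earlier in the excerpt), with the elliptic angles $\th_i$ determining the weights via $w_1(p_i)=\{\th_i/4\pi\}$-type formulas and the Euler number matching $e=-\chi(\dot S)-\sum_{i=1}^k \th_i/2\pi$.

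\textbf{Step 2: Identify the Euler number and the target symmetric product.} I would verify that the stated Euler number $e=-\chi(\dot S)-\sum \th_i/2\pi$ is precisely the value computed from the parabolic degree formula, so that $[\rho]$ lands in the asserted component; this is where the Milnor--Wood/parabolic-degree accounting from Burger--Iozzi--Wienhard and the Higgs-theoretic Euler number computation get reconciled. Then the integer $m=\sum_{i=1}^k\floor{\th_i/2\pi}$ appears as the \emph{effective degree} controlling the symmetric product: geometrically it counts how many full rotations the cone angles carry, which is the degree of the relevant line bundle (roughly $\Det\otimes K_S(\text{divisor})$) whose sections parametrize the nilpotent/Higgs-field directions. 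The subtraction of $s_0$ reflects that at angles which are exact positive integer multiples of $2\pi$ the monodromy becomes central (identity in $\PSL_2$), so those $s_0$ points are forced to carry a fixed part of the divisor rather than a free moving point, dropping the dimension of the symmetric product from $m$ to $m-s_0$. Finally the rank $3g-3+n-m$ of the vector bundle is read off as the dimension of the complementary space of holomorphic quadratic-differential-type data, consistent with $\dim \Higgs = 6g-6+2n$ and the symmetric-product base contributing the remaining dimensions.

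\textbf{Step 3: Assemble via the main theorems and handle the open-vs-closure distinction.} With the data matched, Proposition \ref{prop:topology} gives the real-analytic diffeomorphism type of the irreducible component as a holomorphic vector bundle over the symmetric product, and Theorem \ref{thm:correspondence} transports this from the Higgs side to the $\PSL_2(\RR)$-representation side; the $H^1(S;\ZZ/2\ZZ)$-cover subtlety from the $\SL_2(\RR)$ formulation disappears because we work with $\PSL_2(\RR)$, so no cover is needed here. The main obstacle, and the step requiring genuine care rather than quotation, is \emph{matching the two symmetric products consistently} across the boundary between the open component and its closure: in passing to the closure one allows the moving points of the divisor to collide with the parabolic (cusp) punctures $p_{k+1},\dots,p_r$ but \emph{not} with the hyperbolic ends $p_{r+1},\dots,p_n$, which changes the base from $\mathrm{Sym}^{m-s_0}(S\setminus\{p_{k+1},\dots,p_n\})$ to $\mathrm{Sym}^{m-s_0}(S\setminus\{p_{r+1},\dots,p_n\})$ while keeping the bundle rank $3g-3+n-m$ fixed. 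Verifying that the limiting Higgs bundles at these degenerations remain stable (or are handled by the closure description in Proposition \ref{prop:topology}) and that the gluing of the open diffeomorphism to the closure homeomorphism is continuous is the delicate point; everything else is the numerical translation carried out in Steps 1--2.
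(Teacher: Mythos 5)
Your proposal is correct and follows essentially the same route as the paper: the paper's proof of the corresponding statement (Corollary \ref{cor:uniformization}) is exactly the assembly of Proposition \ref{prop:uniformization} (placing $[\rho]$ in the component of Euler number $e$ via the area/Toledo identification), Theorem \ref{thm:correspondence} and Proposition \ref{prop:topology}, together with the dictionary identifying cusps with positive unipotents and boundary geodesics with hyperbolic classes. Your bookkeeping of $m$, $s_0$, the rank, and the open-versus-closure distinction (cusp punctures removed from the symmetric-product base for the open component but allowed in the closure, hyperbolic ends always removed) agrees with the paper's computation.
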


\subsection{Structure of the paper}

In Section \ref{sec:representations} we review the definition of
representation space of the fundamental group of a punctured surface $\dot{S}$
and the basic smoothness results.
Then we recall the (Riemann-Hilbert) correspondence between representations of
the fundamental group of a surface in an algebraic  group $G$
and flat principal $G$-bundles.
In particular, we review the case of a linear group $G\subset\GL_N$
and the flat vector bundle of rank $N$ attached to a representation.
Then we analyze the case of a $\PSL_2(\RR)$-representation $\rho$ and
we discuss the Euler number of $\rho$, some well-known fundamental results
on the topology of the $\PSL_2(\RR)$-representation space and we compare our results with them.
Finally, we briefly mention monodromy representations
coming from hyperbolic structures possibly with cusps, conical singularities
and geodesic boundaries.

In Section \ref{sec:parabolic} we first recall the notion of parabolic bundle following Simpson
and we show examples 
over the disk $\Delta$, originating (\`a la Mehta-Seshadri) from representations
$\pi_1(\dot{\Delta})\rar \U_1$ and $\pi_1(\dot{\Delta})\rar\SU_2$.
Similarly, we introduce the definition of Higgs bundles and show examples
coming from $\pi_1(\dot{\Delta})\rar \GL_N(\CC)$ with $N=1,2$.
Then we recall the notion of slope stability and of moduli space of stable parabolic
Higgs bundles.
Finally, we specialize to the case of $G=\SL_2$
and we study the topology of the locus fixed by
the involution $[(E_\bullet,\Phi)]\leftrightarrow [(E_\bullet,-\Phi)]$
and that corresponds to representations in $\SL_2(\RR)$ with $\Eu>0$.

In the final Section \ref{sec:correspondence} we recall first
the main correspondence results in the theory of representations
of fundamental groups of surfaces and holomorphic (parabolic) (Higgs) bundles.
Then we illustrate how the correspondence works for $\SL_2(\RR)$ and
for $\PSL_2(\RR)$ and we describe the topology of the components
of the representation space. We finally conclude with two corollaries
about uniformization components.

%
%
%

\subsection{Acknowledgements}

The motivation for this work originated in conversations
about monodromies of hyperbolic structures with conical points with Roberto Frigerio, whom I wish to thank. 

I am grateful to Nicolas Tholozan (reporting
a conversation with Olivier Biquard) for observing that the case of compact components with non-degenerate parabolic
structure as in Corollary \ref{cor:supermaximal}
corresponds to the class of ``super-maximal'' representations studied by Deroin-Tholozan in \cite{deroin-tholozan:super-maximal}
and to Bertrand Deroin for explaining me that the analysis carried out in this paper leads to Corollary \ref{cor:geometric}.
I also thank Paul Seidel for pointing out a former discrepancy (now corrected) in the statement of Corollary \ref{cor:intro},
Johannes Huebschmann for precise remarks on the
locally semialgebraic nature of real representation spaces
and on their tangent spaces,
Peter Gothen and an anonymous referee for drawing my attention to some relevant works in the field,
and both referees for useful comments.

The author's research was partially supported by FIRB 2010 national grant ``Low-dimensional geometry
and topology'' (code: \texttt{RBFR10GHHH\_003}) and by GNSAGA INdAM group.


\subsection{Notation}
Let $S$ be compact, connected, oriented surface of genus $g(S)$ and
$P=\{p_1,\dots,p_n\}$ be a subset of distinct points of $S$.
Denote by
$\dot{S}$ the punctured surface $S\setminus P$ and assume $\chi(\dot{S})<0$.
%

Let $\pi$ be the fundamental group $\pi_1(\dot{S},b)$, where
$b\in \dot{S}$ is a base point, and fix
a universal cover $(\wti{\dot{S}},\ti{b})\rar (\dot{S},b)$
on which $\pi$ then acts by deck transformations.
%

Fix a standard set 
$\{\a_1,\b_1,\dots,\a_{g(S)},\b_{g(S)},\gamma_1,\dots,\gamma_n\}$ of generators of $\pi$, that satisfy the unique relation $[\a_1,\b_1]\cdots[\a_{g(S)},\b_{g(S)}]\gamma_1\cdots\gamma_n=\id$. In particular, $\gamma_i$ is freely homotopic
to a small loop that simply winds about the puncture $p_i$ counterclockwise.
We will also write $\pa_i$ for the conjugacy class of $\gamma_i$ in $\pi$
(or, equivalently, for the free homotopy class of $\gamma_i$ on $\dot{S}$).

We will denote by $G$ a
reductive real or complex algebraic group with finite center $Z=Z(G)$ and
by $\gfrak$ its Lie algebra.
If $\bco_i$ is a conjugacy class or a union of conjugacy classes
in $G$, then we denote by $\ol{\bco}_i$ its closure.
Similarly, if $\bcoa_i$ is a conjugacy class or a union of conjugacy classes in $\gfrak$,
we denote by $\ol{\bcoa}_i$ its closure.
We use the symbol $\bCo$ for an $n$-uple $(\bco_1,\dots,\bco_n)$
and $\bCoa$ for an $n$-uple $(\bcoa_1,\dots,\bcoa_n)$, and similarly
for their closures $\ol{\bCo}$ and $\ol{\bCoa}$.

If $\bm{r}=(r_1,\dots,r_n)$ is a string of $n$ non-negative real numbers, then
$\NO{\bm{r}}$ will denote their sum $r_1+r_2+\dots+r_n$.

\subsubsection{Convention}
We identify $\HH\subset\CC\PP^1$ via $z\mapsto [1:z]$,
so that a matrix in $\PSL_2(\RR)$ acts on $\HH$ as
\[
\left(
\begin{array}{cc}
a & b\\
c & d
\end{array}\right)\cdot z=
\frac{c+dz}{a+bz}
\]
Consider the transformations $R_\theta,T\in\PSL_2(\RR)$ defined as
\[
R_\theta=\left(
\begin{array}{cc}
\cos(\theta/2) & -\sin(\theta/2)\\
\sin(\theta/2) & \cos(\theta/2)
\end{array}\right), 
\quad
T=\left(
\begin{array}{cc}
1 & 0\\
1 & 1
\end{array}\right)
\]
Isometries of $\HH$ conjugate to $R_\theta$
(resp. to $T$, or to $T^{-1}$) in $\PSL_2(\RR)$
are called {\it{rotations of angle $\theta$}}
(resp. {\it{positive unipotents}}, or {\it{negative unipotents}}).

\section{Representations of the fundamental group and flat bundles}\label{sec:representations}

\subsection{Representation spaces}

The set $\Hom(\pi,G)$ of homomorphisms $\pi\rar G$ is denoted by
$\Rep_b(\dot{S},G)$ and it can be identified to
the locus
\[
\left\{
(A_1,B_1,\dots,A_{g(S)},B_{g(S)},C_1,\dots,C_n)\in G^{2{g(S)}+n}\,|\,
[A_1,B_1]\cdots[A_{g(S)},B_{g(S)}]C_1\cdots C_n=\id
\right\}
\]
inside $G^{2{g(S)}+n}$.
%
The algebraic structure on $\Rep_b(\dot{S},G)$ induced as a hypersurface in $G^{2{g(S)}+n}$
is independent of the choice of the generators. 


The group $G$ acts on $\Rep_b(\dot{S},G)$ by conjugation,
that is sending a homomorphism $\rho\in\Rep_b(\dot{S},G)$ to $\Ad_g\circ \rho$.
%
Given any other base-point $b'\in\dot{S}$,
the isomorphism $\Rep_b(\dot{S},G)\cong\Rep_{b'}(\dot{S},G)$ depends on the choice of a path
between $b$ and $b'$, but it becomes canonical after factoring out the action of $G$.

\begin{definition}
The {\it{representation space}} $\Rep(\dot{S},G)$ is the Hausdorffization of the topological quotient
$\RRep(\dot{S},G):=\Rep_b(\dot{S},G)/G$.
\end{definition}

If $G$ is compact, then all $G$-orbits in $\Rep_b(\dot{S},G)$
are closed and so the map $\RRep(\dot{S},G)\rar\Rep(\dot{S},G)$
is a homeomorphism.

\begin{remark}
If $G$ is a complex group, then $\Rep(\dot{S},G)$ agrees with the
GIT quotient $\Rep_b(\dot{S},G)/\!/G$.
If $G$ is real and $G_\CC$ is its complexification, then
$\Rep(\dot{S},G)$ is a locally semialgebraic subset
(i.e. locally defined by real algebraic equalities and inequalities)
inside the real locus $\Rep(\dot{S},G_\CC)(\RR)$,
see for instance \cite{huebschmann:singularities}.
\end{remark}

%
%

\subsubsection{Closed case}

For closed surfaces ($n=0$ and so $\dot{S}=S$), the following result was proven
by Rapinchuk, Benyash-Krivetz and Chernousov \cite{RBC:representation}.

\begin{theorem}[Irreducibility for representations in $\SL_N(\CC)$ and $\PSL_N(\CC)$]
The algebraic varieties $\Rep_b(S,G)$
are irreducible for $G=\SL_N(\CC),\, \PSL_N(\CC)$.
\end{theorem}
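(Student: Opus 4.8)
The plan is to realize $\Rep_b(S,G)$ as a single fiber of the product-of-commutators map and to reduce the statement to the irreducibility of such fibers. For a closed surface of genus $g=g(S)$, set $\mu\colon G^{2g}\rar G$, $\mu(A_1,B_1,\dots,A_g,B_g)=[A_1,B_1]\cdots[A_g,B_g]$, so that $\Rep_b(S,G)=\mu^{-1}(\id)$. For $G=\SL_N(\CC)$ the group is semisimple and simply connected, and it is classical (Shoda--Thompson) that already the single commutator map $\kappa\colon G^2\rar G$, $(A,B)\mapsto[A,B]$, is surjective. For $G=\PSL_N(\CC)$ one uses the isogeny $q\colon\SL_N\rar\PSL_N$ to reduce to the simply connected case, working over each fixed value of the lifting obstruction $o(\rho)\in H^2(S;Z(\SL_N))\cong Z(\SL_N)$, which identifies the relevant locus with a twisted fiber $\mu^{-1}(z)$, $z\in Z(\SL_N)$. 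Thus it suffices to prove that $\mu^{-1}(z)\subset\SL_N(\CC)^{2g}$ is irreducible for every central $z$.

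First I would isolate the good locus. Writing $\gfrak=\Lie(G)$ with the adjoint $\pi$-action induced by $\rho$, the differential of $\mu$ at $\rho$ has cokernel naturally identified with $H^2(\pi,\gfrak_\rho)\cong H^0(\pi,\gfrak_\rho)^\ast$ by Poincar\'e duality, so $\mu$ is a submersion exactly at representations whose centralizer is finite (the \emph{good}, or irreducible, representations). At such points $\mu^{-1}(z)$ is smooth of the expected dimension $(2g-1)\dim G$, and the Euler characteristic of the surface gives $\dim Z^1(\pi,\gfrak_\rho)=(2g-1)\dim G+\dim H^0(\pi,\gfrak_\rho)$, confirming that the good locus is precisely the smooth part of top dimension.

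The heart of the argument is the irreducibility of the good locus of $\mu^{-1}(z)$, which I would establish by an incidence/fibration argument, inducting on the genus. For $g=1$ and $z=\id$ the fiber is the commuting variety $\{(A,B)\,:\,AB=BA\}$, irreducible by Richardson's theorem; the twisted analogues and the passage from $g-1$ to $g$ are handled by slicing off the last commutator, using the assignment $(A_1,\dots,B_g)\mapsto c:=[A_g,B_g]$ to exhibit $\mu^{-1}(z)$ as fibered, over $c\in G$, by $\kappa^{-1}(c)\times\mu_{g-1}^{-1}(zc^{-1})$. One then propagates irreducibility from the fact that $\kappa$ and $\mu_{g-1}$ are dominant with irreducible generic fibers over the irreducible base $G$.

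The main obstacle is precisely that the fiber we care about sits over the special point $z$ (indeed over $\id$ for $\SL_N$), where reducible representations concentrate and where the fiber dimension could a priori jump, so the incidence argument alone does not transport irreducibility of the generic fiber to $\mu^{-1}(z)$. I would resolve this by a separate dimension count showing that the non-good locus — representations factoring through a proper parabolic, stratified by the conjugacy class of their centralizer — has dimension strictly smaller than $(2g-1)\dim G$, via parabolic induction on the Levi factors together with the bound $g\geq 2$. Density of the good locus then forces $\mu^{-1}(z)=\overline{\mu^{-1}(z)^{good}}$, and combined with smoothness and connectedness of the good locus this yields irreducibility; translating back through $q$ gives the statement for $\SL_N(\CC)$ and for each obstruction component of $\PSL_N(\CC)$.
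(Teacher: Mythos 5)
The paper does not prove this statement: it is quoted from Rapinchuk--Benyash-Krivetz--Chernousov \cite{RBC:representation}, so there is no in-paper argument to compare with, and your proof has to stand on its own. The general framework you set up is sound and standard: realizing $\Rep_b(S,G)$ as the fiber $\mu^{-1}(\id)$ of the product-of-commutators map, identifying the submersion locus with $\{\gfrak^\rho=0\}$ via $H^2\cong (H^0)^\ast$, and bounding the dimension of the complement. The genuine gap sits exactly where you put the heart of the argument. The fibration $(A_1,\dots,B_g)\mapsto [A_g,B_g]$ is not equidimensional: the fiber over $\id$ involves the commuting variety, of dimension $\dim G+\rk(G)$, which strictly exceeds the generic fiber dimension $\dim G$. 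Hence irreducibility of the generic fiber over the irreducible base $G$ does not by itself yield irreducibility of the total space; you would need a dimension estimate for the preimage of the non-regular values of $\kappa$ at \emph{every} stage of the genus induction, whereas you only address the outermost special fiber. Worse, the key input --- irreducibility of the generic fiber $\kappa^{-1}(c)$ of the commutator map --- is essentially the genus-one, one-boundary case of the theorem you are trying to prove: Richardson covers only $\kappa^{-1}(\id)$, you give no argument for $c\neq\id$, and for the twisted fibers $\{[A,B]=z\}$ with $z$ a nontrivial central element (which you need for $\PSL_N$) the fiber is a finite union of orbits whose connectedness is genuinely delicate. As it stands, the hardest steps are asserted rather than proved.

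Two further remarks. The dimension bound for the non-regular locus is also only asserted; it is believable and standard, but note that ``positive-dimensional centralizer'' is strictly weaker than reducibility of the linear representation (a non-semisimple triangular representation can have $\gfrak^\rho=0$ in $\psl_N$), so the stratification by parabolics and Levi factors must be set up with some care, and the inequality really uses $g\geq 2$. On the other hand, you are right to conclude only the irreducibility of each obstruction component of $\Rep_b(S,\PSL_N(\CC))$: since all $N$ classes in $H^2(S;\ZZ/N\ZZ)$ are realized and separate classical (hence Zariski) connected components, the variety $\Rep_b(S,\PSL_N(\CC))$ cannot literally be irreducible for $N\geq 2$, and the statement as printed should be read componentwise; your formulation is the correct one.
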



\begin{problem}[Topology of representation spaces]
Assuming $n=0$,
determine the topology of $\Rep_b(S,G)$ and $\Rep(S,G)$:
in particular, enumerate the connected components (for the classical topology) of
$\Rep(S,G)$.
\end{problem}

We will see below that the above problem was almost completely solved by Hitchin
\cite{hitchin:self-duality} for $G=\SL_2(\RR),\,\PSL_2(\RR)$.

\subsubsection{Punctured case}

Assume now $n>0$. Since $\pi_1(\dot{S})$ is free on $2g(S)+n-1$ generators, $\Rep_b(\dot{S},G)$ is
isomorphic to $G^{2{g(S)}+n-1}$ and 
$G$ acts diagonally by conjugation
on each factor of $G^{2{g(S)}+n-1}$.

The situation becomes more interesting if we consider the relative situation.

\begin{definition}
Let $\bCo=(\bco_1,\dots,\bco_n)$, where $\bco_i$ is a conjugacy class in $G$
(or possibly a finite union of conjugacy classes).
The {\it{relative homomorphism space}}
$\Rep_b(\dot{S},G,\bCo)$ is the locus in 
$\Rep_b(\dot{S},G)$ of homomorphisms $\rho:\pi\rar G$
such that $\rho(\pa_i)\in\bco_i$.
\end{definition}
 
Equivalently, $\Rep_b(\dot{S},G,\bCo)$ is the preimage of
$\bCo\subset G^n$ under the evaluation map $\ev:\Rep_b(\dot{S},G)\rar G^n$
that sends $\rho$ to $\big(\rho(\gamma_1),\dots,\rho(\gamma_n)\big)$.

If $G$ is a complex group, then conjugacy classes and their closures are algebraic subvarieties and so $\Rep_b(\dot{S},G,\bCo)$ is an algebraic subvariety of $\Rep_b(\dot{S},G)$. If $G$ is real, then conjugacy classes and their closures are in general
semialgebraic subsets and so
$\Rep_b(\dot{S},G,\bCo)$ is semialgebraic inside $\Rep_b(\dot{S},G)$.

\begin{example}
In the special case of $G=\PSL_2(\RR)$ conjugacy classes consisting
of hyperbolic or elliptic elements are algebraic and closed; on the other hand,
the class $\bco\subset\PSL_2(\RR)$ consisting of positive 
unipotent elements (or negative unipotent elements) is semi-algebraic and not closed:
indeed, its classical closure $\ol{\bco}$ consists of $\bco\cup\{\id\}$ and its Zariski closure
is $\bco\cup\{\id\}\cup(-\bco)$.
\end{example}

%
%

The definition of $\Rep_b(\dot{S},G,\bCo)$ as well as 
the induced (semi-)algebraic structure 
are independent of the
choice of the loops $\gamma_1,\dots,\gamma_n$.


\begin{definition}
The {\it{relative representation space}} $\Rep(\dot{S},G,\bCo)$ is the Hausdorffization of the topological quotient
$\RRep(\dot{S},G,\bCo):=\Rep_b(\dot{S},G,\bCo)/G$.
\end{definition}

As in the absolute case,
if $G$ is a complex group, then $\Rep(\dot{S},G,\bCo)$ agrees with the GIT quotient $\Rep_b(\dot{S},G,\bCo)/\!/G$
and it is an algebraic subvariety of $\Rep(\dot{S},G)$. 
If $G$ is real, then
$\Rep(\dot{S},G,\bCo)$ is a locally semialgebraic subset
of $\Rep(\dot{S},G)$.

%
%
%

\begin{problem}[Topology of relative representation spaces]
Assuming $n>0$,
determine the topology of 
$\Rep_b(\dot{S},G,\bCo)$ and $\Rep(\dot{S},G,\bCo)$: in particular,
enumerate
the connected components (for the classical topology) of $\Rep(\dot{S},G,\bCo)$.
\end{problem}

Same remarks and questions hold for representations with boundary values in $\ol{\bCo}$.


\begin{remark}\label{rmk:center}
In light of the short exact sequence $0\rar Z\rar G\rar G/Z\rar 0$,
we can view $\Rep_b(\dot{S},G)$ as an $H^1(\dot{S},Z)$-bundle over 
$\Rep_b(\dot{S},G/Z)$.
It is easy to see that, if $\tilde{c}_i\subset G$ is the preimage of the conjugacy class
$c_i\subset G/Z$ under the projection $G\rar G/Z$, then
$\Rep_b(\dot{S},G,\bm{\wti{\bCo}})$ is an $H^1(S,Z)$-bundle
over $\Rep_b(\dot{S},G/Z,\bCo)$.
\end{remark}

\subsection{Flat $G$-bundles}

Let $\Gcal$ be the sheaf of smooth functions
with values in $G$ and 
$\ul{G}$ the subsheaf of locally constant functions.

It is well-known that there is a bijective
correspondence between
$G$-local systems
on $\dot{S}$
and
principal $G$-bundles $\xi\rar\dot{S}$
endowed with a flat connection $\nabla\in \Omega^1(\xi,\gfrak)^G$.
Indeed,
for every flat $G$-bundle $(\xi,\nabla)$,
the sheaf $\ul{\xi}$ of parallel sections of $\xi$ is a local system;
vice versa, given a $G$-local system $\ul{\xi}$,
the $G$-bundle whose sheaf of smooth sections is $\xi=\Gcal\times_{\ul{G}}\ul{\xi}$ 
is endowed with a flat connection induced by
the exterior differential $d:\Ocal\rar\Omega^1$.
Such a construction also establishes
a correspondence between {\it{framed flat principal $G$-bundles}}
$(\xi,\nabla,\tau)$ consisting of a flat $G$-bundle $(\xi,\nabla)$ on $\dot{S}$
with a trivialization $\tau:\xi_b\arr{\sim} G$ at the base point $b\in\dot{S}$
and {\it{framed $G$-local systems}} $(\ul{\xi},\tau')$ consisting of
a $G$-local system $\ul{\xi}$ with a trivialization
$\tau':\ul{\xi}_b\arr{\sim} G$ at $b$.
The isomorphisms $\tau$ and $\tau'$ are called {\it{framings}}.

\begin{notation}
We will denote by $\Flat(\dot{S},G)$ the set isomorphism classes of flat
principal $G$-bundles
on $\dot{S}$ and by
$\Flat_b(\dot{S},G)$ the set of isomorphism classes of $b$-framed flat principal
$G$-bundles on $\dot{S}$.
\end{notation}

We wish to recall the well-known correspondence between
flat $G$-bundles and $G$-representations of $\pi$ and to adapt it
to the framed case.

Consider the trivial $G$-bundle $\ti{\xi}:=\wti{\dot{S}}\times G\rar\wti{\dot{S}}$
with the framing $\ti{\tau}: \ti{\xi}_{\ti{b}}=\{\ti{b}\}\times G\arr{\sim}G$
given by the projection onto the second factor.
The flat connection $\wti{\nabla}$ on $\ti{\xi}$ is simply given by the de Rham
differential.

Given a representation $\rho\in\Rep_b(\dot{S},G)$,
the fundamental group $\pi$ acts on $\ti{\xi}$, and more precisely via deck transformations
on the factor $\wti{\dot{S}}$ and via $m_L\circ \rho$ on the factor $G$, where
$m_L$ is the action of $G$ on $G$ by left multiplication.
As a consequence, $\wti{\nabla}$ descends to a flat connection $\nabla$ on
the $G$-bundle $\xi_\rho:=\ti{\xi}/\pi$ on $\dot{S}$.
Moreover, $\ti{\tau}$ induces a framing $\tau:\xi_b\arr{\sim}G$ through
the isomorphism $\ti{\xi}_{\ti{b}}\arr{\sim}\xi_b$. This construction determines
an application
\[
\Xi_b:\Rep_b(\dot{S},G)\lra \Flat_b(\dot{S},G).
\]
Vice versa, given a framed flat $G$-bundle $(\xi,\nabla,\tau)$,
the holonomy representation based at $b$ descends
to a homomorphism $\pi\rar \Aut(\xi_b)\cong G$ by the flatness of $\nabla$,
and so via $\tau$ to a homomorphism $\rho:=\hol_b(\xi):\pi\rar G$.
This construction determines an application
\[
\hol_b:\Flat_b(\dot{S},G)\lra
\Rep_b(\dot{S},G).
\]
It is easy to check that $\Xi_b$ and $\hol_b$ are set-theoreticallly inverse of each other.

Any two trivializations $\tau_1,\tau_2: \xi_b\rar G$ at $b$ are related by 
a unique element $g\in G$, namely $\tau_2=m_L(g)\circ\tau_1$.
Hence, factoring out the action of $G$ one obtains the applications
$\Xi: \RRep(\pi,G)\rar \Flat(\dot{S},G)$ and
$\Rhol: \Flat(\dot{S},G)\rar \RRep(\pi,G)$
which
are set-theoretically inverse of each other.

\begin{remark}
Viewing $\Flat_b(\dot{S},G)$ as a quotient of the space of flat connections
on $G\times \dot{S}\rar \dot{S}$, it is possible to endow $\Flat_b(\dot{S},G)$
with the structure of (real or complex) analytic variety that makes
$\hol_b$ and $\Xi_b$ analytic isomorphisms. Since we do not want to go
deeper in this direction, we can alternatively just put on $\Flat_b(\dot{S},G)$
the analytic structure induced by $\hol_b$ and $\Xi_b$.
\end{remark}

The correspondence in the relative case is dealt with analogously.

\begin{notation}
Denote by $\Flat_b(\dot{S},G,\bCo)$
the set of isomorphism classes of $b$-framed flat $G$-bundles on $\dot{S}$
with holonomy along the path $\pa_i$ in $\co_i$,
and by $\Flat(\dot{S},G,\bCo)$ the
set of isomorphisms classes of $G$-bundles on $\dot{S}$ with holonomy
along the $i$-th end in $\co_i$.
Analogous notation for flat bundles with boundary monodromy in $\ol{\bCo}$.
\end{notation}

The same construction as above works in the relative case and
we summarize the discussion in the following statement.

\begin{lemma}[Equivalence between representations of $\pi_1$ and flat $G$-bundles]
The applications
\[
\xymatrix@R=0in{
\Rep_b(\dot{S},G,\bCo)
\ar@/^1pc/[rr]^{\hol_b} && \ar@/^1pc/[ll]^{\Xi_b} 
\Flat_b(\dot{S},G,\bCo)
}
\]
are set-theoretically inverse of each other.
By factoring the $G$-action by conjugation on the representation
and by left multiplication on the $b$-framing, we recover
the correspondence
\[
\xymatrix@R=0in{
\RRep(\dot{S},G,\bCo)\ar@/^1pc/[rr]^{\Rhol} && \ar@/^1pc/[ll]^{\Xi} 
\Flat(\dot{S},G,\bCo).
}
\]
\end{lemma}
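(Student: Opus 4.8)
The plan is to establish the four set-theoretic assertions in order: first that $\hol_b$ and $\Xi_b$ are mutually inverse bijections between the framed relative spaces, then that they descend to the mutually inverse pair $\Rhol$ and $\Xi$ after quotienting by $G$. The essential point is that the entire construction is a \emph{relative} refinement of the absolute equivalence already recalled in the paragraphs preceding the statement: the maps $\hol_b$ and $\Xi_b$ between $\Rep_b(\dot{S},G)$ and $\Flat_b(\dot{S},G)$ have been shown to be set-theoretically inverse, so all that remains is to check that each carries the boundary condition $\rho(\pa_i)\in\bco_i$ to the corresponding holonomy condition along the $i$-th end, and vice versa. Thus I would first observe that it suffices to prove the compatibility of these two maps with the evaluation of (holonomy at / representation of) the peripheral classes $\pa_i$.

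The key geometric input is that for the bundle $\xi_\rho=\ti{\xi}/\pi$ produced by $\Xi_b$, the holonomy of the flat connection $\nabla$ around a loop freely homotopic to $\gamma_i$ is conjugate (through the framing $\tau$) precisely to $\rho(\gamma_i)=\rho(\pa_i)$. First I would recall that holonomy depends only on the free homotopy class of a loop up to conjugation, so the holonomy along the $i$-th end is a well-defined conjugacy class in $\Aut(\xi_b)\cong G$. Since $\ti{\nabla}$ on the trivial bundle over $\wti{\dot S}$ is the de Rham differential, parallel transport is trivial upstairs, and the deck action of $\gamma_i$ composed with $m_L\circ\rho$ on the $G$-factor shows that the descended holonomy of $\gamma_i$ equals $\rho(\gamma_i)$ read through $\ti\tau$. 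Hence $\rho\in\Rep_b(\dot{S},G,\bCo)$ if and only if $\Xi_b(\rho)$ has holonomy along each end in $\bco_i$, which is exactly the defining condition for membership in $\Flat_b(\dot{S},G,\bCo)$. The same identity, read in the reverse direction, gives that $\hol_b$ sends $\Flat_b(\dot{S},G,\bCo)$ into $\Rep_b(\dot{S},G,\bCo)$. Since $\hol_b$ and $\Xi_b$ are already inverse on the ambient framed spaces, their restrictions to these subspaces are inverse as well.

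For the second diagram I would invoke the discussion immediately above the statement: two framings $\tau_1,\tau_2$ differ by a unique $g\in G$ via $\tau_2=m_L(g)\circ\tau_1$, and changing the framing by $g$ corresponds on the representation side to replacing $\rho$ by $\Ad_g\circ\rho$. The crucial observation is that conjugation by $g$ preserves conjugacy classes, so it carries $\rho(\pa_i)\in\bco_i$ to $\Ad_g(\rho(\pa_i))\in\bco_i$; the relative boundary condition is therefore $G$-invariant, and likewise the holonomy-along-the-end condition on the flat-bundle side is invariant under the simultaneous change of framing. Consequently the $G$-action by conjugation on $\Rep_b$ and by left multiplication on the framing descend to the relative quotients $\RRep(\dot{S},G,\bCo)$ and $\Flat(\dot{S},G,\bCo)$, and $\Xi_b,\hol_b$ pass to the quotient to give $\Xi,\Rhol$, which remain mutually inverse.

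The main obstacle is really only bookkeeping rather than a genuine difficulty: I must be careful that the boundary datum is phrased as membership in a conjugacy class (or union of classes), since only then is it preserved both by the change-of-framing action and by the intrinsic ambiguity of holonomy along a free homotopy class; a pointwise condition on $\rho(\gamma_i)$ would not descend to the unframed quotient. I would therefore emphasize that it is exactly the conjugation-invariance of $\bco_i$ that makes the relative condition well-defined after quotienting, and that the free-homotopy class $\pa_i$ rather than a specific loop $\gamma_i$ is the correct object to track on the flat-bundle side. With this point made explicit, the remainder follows formally from the absolute equivalence.
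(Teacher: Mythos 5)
Your proposal is correct and follows the same route as the paper, which simply observes that the absolute correspondence $\Xi_b$/$\hol_b$ restricts to the relative subspaces and descends to the $G$-quotients; your additional care about conjugation-invariance of $\bco_i$ and the free-homotopy class $\pa_i$ only makes explicit what the paper leaves implicit.
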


We call the composition $\Flat(\dot{S},G,\bCo)\arr{\Rhol}\RRep(\dot{S},G,\bCo)\rar \Rep(\dot{S},G,\bCo)$ simply $\hol$.

\subsection{Smoothness}

Given a representation $\rho:\pi\rar G$,
we denote by $\gfrak^\rho\subset\gfrak$ (resp. $G^\rho\subset G$) 
the subset of elements which are invariant
under the adjoint action of $\pi$ though $\rho$.
 
We can naturally identify
$\Aut(\xi_\rho)$ with the centralizer 
$Z_G(\rho(\pi))=G^{\rho}$ of $\rho(\pi)$ in $G$
and
the space $T_\id \Aut(\xi_\rho)$ of first-order automorphisms
with $\gfrak^\rho\cong H^0(\dot{S},\Ad(\xi_\rho))$, where
$\Ad(\xi_\rho)$ is the flat $\gfrak$-bundle with monodromy $\Ad\circ\rho$.


\begin{definition}
A representation $\rho$ is {\it{regular}} if $\gfrak^\rho=\{0\}$; it is
{\it{Zariski-dense}} if its image is Zariski-dense.
We denote by $\Rep_b^r(\dot{S},G)$ (resp. by $\Rep_b^{Zd}(\dot{S},G)$) the
subsets of $\Rep_b(\dot{S},G)$ of regular (resp. Zariski-dense) representations,
and similarly by $\Rep^r(\dot{S},G)$ (resp. by $\Rep^{Zd}(\dot{S},G)$) the corresponding
locus in $\Rep(\dot{S},G)$.
\end{definition}

Since we are assuming $G$ algebraic, $\rho$ is regular
if and only if $Z(\rho(\pi))$ is finite. 
%
%
A proof of the following statement can be found in \cite{johnson-millson:deformation} and
in Section 5.3.4 of \cite{labourie:book}.

\begin{lemma}[Proper action of $G$ on $\Rep_b^r$]\label{lemma:properness}
The action on the subset $\Rep_b^{Zd}(\dot{S},G)\subseteq\Rep_b^r(\dot{S},G)$ is proper and with stabilizer $Z$.
\end{lemma}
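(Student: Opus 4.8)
The statement asserts that the $G$-action by conjugation on $\Rep_b^{Zd}(\dot{S},G)$ is proper with stabilizer $Z$. Since properness and the stabilizer computation are both local-to-global in nature and intrinsic to the conjugation action, the plan is to treat the two claims in sequence: first identify the stabilizer, then establish properness, exploiting the equivalence with flat $G$-bundles from the preceding lemma to reason geometrically when convenient.

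For the stabilizer, I would argue as follows. By definition, $g\in G$ stabilizes $\rho$ (for the conjugation action) if and only if $\Ad_g\circ\rho=\rho$, i.e. $g$ centralizes the image $\rho(\pi)$. Thus the stabilizer of $\rho$ is exactly $Z_G(\rho(\pi))=G^\rho$. Now if $\rho$ is Zariski-dense, then the Zariski closure of $\rho(\pi)$ is all of $G$, so any element centralizing $\rho(\pi)$ must centralize $G$ itself (centralizing is a Zariski-closed condition); hence $G^\rho=Z_G(G)=Z$. This shows the stabilizer of every point of $\Rep_b^{Zd}(\dot{S},G)$ is precisely the center $Z$, which is the second assertion.

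For properness, I would use the standard criterion: the action of $G$ on a topological space $X$ is proper if and only if for every pair of sequences $\rho_n\to\rho$ in $X$ and $g_n\in G$ with $\Ad_{g_n}\circ\rho_n\to\rho'$ in $X$, the sequence $g_n$ admits a convergent subsequence modulo the stabilizer. Equivalently, since the stabilizer is the compact (indeed finite, as $Z$ is finite by hypothesis on $G$) center $Z$, I must show the $g_n$ themselves subconverge. The natural strategy is: because $\rho$ is Zariski-dense, I can select finitely many elements $\delta_1,\dots,\delta_k\in\pi$ such that $\rho(\delta_1),\dots,\rho(\delta_k)$ generate $G$ as an algebraic group and, more strongly, such that the map $G\to G^k$, $g\mapsto(\Ad_g\rho(\delta_1),\dots,\Ad_g\rho(\delta_k))$, is a proper embedding of $G/Z$ into $G^k$ near $\rho$. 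Then the convergence $\Ad_{g_n}\circ\rho_n\to\rho'$ forces the tuples $(\Ad_{g_n}\rho_n(\delta_j))_j$ to converge in $G^k$; combined with $\rho_n(\delta_j)\to\rho(\delta_j)$ and the properness of that embedding, this yields a convergent subsequence of $[g_n]$ in $G/Z$, hence of $g_n$ up to the finite group $Z$.

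The main obstacle is the properness of the orbit map $g\mapsto(\Ad_g\rho(\delta_j))_j$ near a Zariski-dense point: one must verify that a bounded perturbation of the conjugated generators genuinely bounds $g$ modulo the center, uniformly as $\rho_n$ varies near $\rho$. This is where Zariski-density does the essential work — it rules out the degeneracy in which the $g_n$ escape to infinity while keeping the conjugated representation bounded (which can happen for non-Zariski-dense $\rho$, e.g. when $\rho(\pi)$ lies in a proper parabolic). I expect the cleanest route is to invoke the cited results of Johnson--Millson \cite{johnson-millson:deformation} and Labourie \cite{labourie:book}, which establish precisely this properness via the harmonic-map / Cartan-decomposition estimates, rather than reproving the estimate from scratch; the remaining content is then the reduction of the stabilizer to $Z$ carried out above, which is elementary given Zariski-density.
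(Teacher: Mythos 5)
The paper gives no proof of this lemma at all: it simply cites Johnson--Millson and Section 5.3.4 of Labourie's book, which is exactly where your argument also lands for the essential properness estimate. Your added computation that Zariski-density forces $G^\rho=Z_G(\ol{\rho(\pi)}^{\mathrm{Zar}})=Z_G(G)=Z$ is correct and elementary, so the proposal is consistent with (and slightly more detailed than) the paper's treatment.
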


A standard deformation theory argument shows that
first-order deformations of $\xi$ are parametrized by $H^1(\dot{S},\Ad(\xi))$.
The proof of the following result can be found for instance in \cite{goldman:symplectic}, \cite{johnson-millson:deformation},
\cite{labourie:book}.

\begin{proposition}[Tangent space to representation spaces]\label{prop:smoothness}
For every representation $\rho\in\Rep_b(\dot{S},G)$, we have
\[
H^0(\dot{S};\Ad(\xi_\rho))  \cong \gfrak^\rho 
\qquad\text{and}\qquad
H^2(\dot{S};\Ad(\xi_\rho))  \cong
\begin{cases} 
(\gfrak^\vee)^\rho & \text{if $n=0$}\\ 
0 & \text{otherwise}
\end{cases}
\]
by Poincar\'e duality. The centralizer $G^\rho$ acts on 
$H^1(\dot{S};\Ad(\xi_\rho))$ by adjunction and the quotient
can be identified to $T_{[\rho]}\Rep(\dot{S},G)$.
%
Hence, the regular locus is an open dense orbifold in the representation spaces and
\begin{align*}
\def\arraystretch{1.3}\begin{array}{|c|c|c|}
\hline
& \Rep^r_b(\dot{S},G) & \Rep^r(\dot{S},G)\\
\hline
\dim & (-\chi(\dot{S})+1)\dim(G) 
 & -\chi(\dot{S})\dim(G) 
 \\
\hline
\end{array}
\end{align*}
for any $n\geq 0$.
%
%
%
\end{proposition}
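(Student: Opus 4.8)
The plan is to pass from the flat bundle $\Ad(\xi_\rho)$ to the group cohomology of $\pi=\pi_1(\dot{S})$ acting on $\gfrak$ through $\Ad\circ\rho$: since $\dot{S}$ is aspherical (a $K(\pi,1)$), one has $H^\bullet(\dot{S};\Ad(\xi_\rho))\cong H^\bullet(\pi;\gfrak_{\Ad\rho})$. In degree zero this gives $H^0(\dot{S};\Ad(\xi_\rho))\cong\gfrak^\rho$ directly from the definition of the invariants, recovering the identification already recorded before the statement. For $n>0$ the surface $\dot{S}$ is homotopy equivalent to a wedge of $2g(S)+n-1$ circles, hence has cohomological dimension $1$, so $H^2(\dot{S};\Ad(\xi_\rho))=0$ at once.

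For the closed case $n=0$ I would invoke Poincar\'e duality for the local system $\Ad(\xi_\rho)$ on $S$. The key point is that, since $Z=Z(G)$ is finite, the Lie algebra center $\zfrak$ is trivial and $\gfrak$ is semisimple, so the Killing form $\Kill$ is a nondegenerate $\Ad$-invariant pairing. It therefore induces a $\pi$-equivariant isomorphism $\gfrak\cong\gfrak^\vee$ and identifies $\Ad(\xi_\rho)$ with its dual local system. Poincar\'e duality on the closed oriented surface $S$ then yields $H^2(S;\Ad(\xi_\rho))\cong H^0(S;\Ad(\xi_\rho)^\vee)^\vee\cong(\gfrak^\vee)^\rho$, which is the asserted formula.

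Next I would treat the tangent space. A first-order deformation of $\rho$ is a cocycle in $Z^1(\pi;\gfrak_{\Ad\rho})$, and the deformations obtained by infinitesimally conjugating $\rho$ are exactly the coboundaries $B^1(\pi;\gfrak_{\Ad\rho})$, whose dimension equals $\dim(G)-\dim\gfrak^\rho$; quotienting gives $H^1(\dot{S};\Ad(\xi_\rho))=Z^1/B^1$ as the tangent space to the quotient, on which the residual centralizer $G^\rho$ acts through the induced adjoint action. At a regular representation $\gfrak^\rho=0$, so $G^\rho$ has trivial Lie algebra and is therefore finite; combined with the properness of the $G$-action on the regular locus (as in Lemma \ref{lemma:properness}), this exhibits a neighbourhood of $[\rho]$ as the quotient of $H^1$ by the finite group $G^\rho$, i.e. an orbifold chart, and identifies the quotient with $T_{[\rho]}\Rep(\dot{S},G)$. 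Openness of the regular locus follows from upper semicontinuity of $\rho\mapsto\dim\gfrak^\rho$, and density is the standard statement that the non-regular locus is a proper subvariety.

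Finally, the dimensions follow from the Euler characteristic identity $\sum_i(-1)^i\dim H^i(\dot{S};\Ad(\xi_\rho))=\chi(\dot{S})\dim(G)$, valid for a flat bundle of rank $\dim(G)$. At a regular point $H^0=0$, and $H^2=0$ in both cases ($n>0$ trivially, $n=0$ because $(\gfrak^\vee)^\rho\cong\gfrak^\rho=0$ via the Killing form), so $\dim H^1=-\chi(\dot{S})\dim(G)$, giving $\dim\Rep^r(\dot{S},G)$. Adding back the $\dim(G)$ coboundary directions gives $\dim\Rep_b^r(\dot{S},G)=\dim Z^1=(-\chi(\dot{S})+1)\dim(G)$. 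The main obstacle is the smoothness input underlying the tangent-space identification: for $n=0$ one must check that the obstruction to integrating a class in $H^1$ lies in $H^2$ via the cup-product (quadratic cone) argument of Goldman, so that $\Rep_b(\dot{S},G)$ is genuinely smooth of the expected dimension at regular points; for $n>0$ this is automatic, since $\Rep_b(\dot{S},G)\cong G^{2g(S)+n-1}$ is smooth outright.
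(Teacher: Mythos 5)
Your argument is correct and is precisely the standard deformation-theoretic proof that the paper itself does not spell out but delegates to its cited references (Goldman, Johnson--Millson, Labourie): identify the cohomology of the flat bundle with group cohomology of the aspherical surface, get $H^0\cong\gfrak^\rho$ and the vanishing of $H^2$ for $n>0$ from cohomological dimension one, handle $n=0$ by Poincar\'e duality via the nondegenerate invariant form on the semisimple $\gfrak$, kill the obstruction in $H^2$ at regular points, and read off the dimensions from the Euler characteristic of the local system together with $\dim Z^1=\dim H^1+\dim G$ when $\gfrak^\rho=0$. The only cosmetic remark is that the properness input (Lemma \ref{lemma:properness}) is stated in the paper for the Zariski-dense locus, so for the orbifold-chart claim on all of $\Rep^r$ one uses finiteness of $G^\rho$ exactly as you do; this matches the intended reading of the statement.
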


%

The tangent space $T_{[\rho]}\Rep(\dot{S},G,\bCo)$
in the relative case can be analyzed by means of the
following exact sequence associated to the couple $(\dot{S},\dot{\Delta})$
\begin{align*}
0=H^0(\dot{S},\dot{\Delta};\Ad(\xi))\rar H^0(\dot{S};\Ad(\xi))
\rar H^0(\dot{\Delta};\Ad(\xi))\rar H^1(\dot{S},\dot{\Delta};\Ad(\xi))\rar\\ 
\rar H^1(\dot{S};\Ad(\xi))\rar H^1(\dot{\Delta};\Ad(\xi))
\rar H^2(\dot{S},\dot{\Delta};\Ad(\xi))\rar H^2(\dot{S};\Ad(\xi))=0
\end{align*}
where
the $\Delta_i$'s are disjoint open contractible neighbourhoods of the $p_i$'s
and
$\dot{\Delta}=\bigcup_{i=1}^n \dot{\Delta}_i$
is the union of the punctured disks $\dot{\Delta}_i=\Delta_i\setminus\{p_i\}$.

In fact, since $G$ is reductive, the Lie algebra $\gfrak$ has a non-degenerate invariant symmetric bilinear form, 
which induces an $\Ad$-invariant isomorphism $\gfrak\cong\gfrak^\vee$. By Lefschetz duality, 
$H^2(\dot{S},\dot{\Delta};\Ad(\xi))\cong H^0(\dot{S};\Ad(\xi))^\vee$ and 
$H^1(\dot{S},\dot{\Delta};\Ad(\xi))\cong H^1(\dot{S};\Ad(\xi))^\vee$.
Thus, we obtain the $G^\rho$-equivariant exact sequence
\[
0\rar \gfrak^\rho\rar H^0(\dot{\Delta};\Ad(\xi))\arr{\epsilon^\vee}
H^1(\dot{S};\Ad(\xi))^\vee \rar   H^1(\dot{S};\Ad(\xi))  \arr{\epsilon}
H^1(\dot{\Delta};\Ad(\xi))\rar (\gfrak^\vee)^\rho \rar 0
\]
Cocycles in $\ker(\epsilon)$ induce first-order deformations
$\rho_t$ of $\rho$ such that
for every $i$ the boundary value $\rho_t(\gamma_i)$ is conjugate to $\rho(\gamma_i)$ for all $t$.
Thus, taking $G^\rho$-coinvariants, we obtain
$T_{[\rho]}\Rep(\dot{S},G,\bCo)\cong \ker(\epsilon)_{G^\rho}$.

Combining the above computation with the properness in
Lemma \ref{lemma:properness}, we can conclude as follows.

\begin{corollary}[Tangent space to relative representation spaces]\label{cor:relative-smoothness}
For $n>0$, the locus $\Rep^r(\dot{S},G,\bCo)$ is a smooth orbifold of
dimension 
\[
\def\arraystretch{1.3}\begin{array}{|c|c|c|}
\hline
& \Rep^r_b(\dot{S},G,\bCo) & \Rep^r(\dot{S},G,\bCo)\\
\hline
\dim & (-\chi(S)+1)\dim(G)+ \dim(\bCo) & 
-\chi(S)\dim(G) 
+\dim(\bCo)\\
\hline
\end{array}
\]
Moreover, the singular locus of $\Rep^r(\dot{S},G,\ol{\bCo})$
consists of those $[\rho]$ with boundary values
in the singular locus of $\ol{\bCo}$.
\end{corollary}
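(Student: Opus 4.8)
The plan is to extract everything from the $G^\rho$-equivariant four-term exact sequence displayed just above, combined with the properness statement of Lemma \ref{lemma:properness}. I would first settle smoothness and the dimension counts at a regular $\rho$ for honest conjugacy classes $\bCo$, and only afterwards analyse the closure case $\ol{\bCo}$, where singularities appear.

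For smoothness of the framed space, note that since $n>0$ the group $\pi$ is free, so $\Rep_b(\dot S,G)\cong G^{2g(S)+n-1}$ is already a smooth manifold and the relative space is the fibre $\ev^{-1}(\bCo)$. At a regular $\rho$ one has $\gfrak^\rho=0=(\gfrak^\vee)^\rho$, and the exact sequence collapses to $0\to H^0(\dot\Delta;\Ad(\xi))\arr{\epsilon^\vee}H^1(\dot S;\Ad(\xi))^\vee\to H^1(\dot S;\Ad(\xi))\arr{\epsilon}H^1(\dot\Delta;\Ad(\xi))\to 0$, in which $\epsilon$ is surjective. Surjectivity of $\epsilon$ is precisely the \emph{transversality} of $\ev$ to $\bCo\subset G^n$, so $\ev^{-1}(\bCo)$ is smooth along its regular locus, and its tangent space is the preimage of $\ker(\epsilon)$ under the projection from cocycles to $H^1(\dot S;\Ad(\xi))$.

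To read off the dimensions from the same sequence, observe that each $\dot\Delta_i$ retracts to a circle, so $\dim H^1(\dot\Delta_i;\Ad(\xi))=\dim H^0(\dot\Delta_i;\Ad(\xi))=\dim\gfrak^{\rho(\gamma_i)}=\dim G-\dim\bco_i$; summing gives $\dim H^0(\dot\Delta;\Ad(\xi))=\dim H^1(\dot\Delta;\Ad(\xi))=n\dim G-\dim\bCo$. As $\dot S$ has the homotopy type of a $1$-complex and $\gfrak^\rho=0$, an Euler characteristic count gives $\dim H^1(\dot S;\Ad(\xi))=-\chi(\dot S)\dim G$. A rank count in the four-term sequence then yields $\dim\ker(\epsilon)=-\chi(\dot S)\dim G-(n\dim G-\dim\bCo)=-\chi(S)\dim G+\dim\bCo$, using $-\chi(\dot S)-n=-\chi(S)$. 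Since $G^\rho$ is finite on the regular locus, taking $G^\rho$-coinvariants preserves dimensions and $T_{[\rho]}\Rep^r(\dot S,G,\bCo)\cong\ker(\epsilon)_{G^\rho}$ has this dimension; for the framed space one adds the dimension $\dim G$ of the coboundaries $\gfrak/\gfrak^\rho$, recovering $(-\chi(S)+1)\dim G+\dim\bCo$. The orbifold claim then follows because the $G$-action on $\Rep^r_b(\dot S,G,\bCo)$ is proper (Lemma \ref{lemma:properness}) with finite stabilizer $G^\rho\supseteq Z$, so the quotient of this smooth manifold is a smooth orbifold of dimension $-\chi(S)\dim G+\dim\bCo$.

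For the closure case $\ol{\bCo}$ I would stratify each $\ol{\bco}_i$ by conjugacy classes. Over a smooth point of $\ol{\bco}_i$ the boundary value lies in a single stratum and the transversality argument above gives smoothness, so the singular locus is contained in the set of $[\rho]$ with some $\rho(\gamma_i)$ singular in $\ol{\bco}_i$. For the reverse inclusion the point is that at a regular $\rho$ the surjectivity of $\epsilon$ upgrades to \emph{submersivity} of $\ev$ onto a slice $\Sigma_i\subset G$ transverse to the conjugacy class of $\rho(\gamma_i)$; hence near $[\rho]$ the space $\Rep^r(\dot S,G,\ol{\bCo})$ is locally a product of a smooth factor with $\prod_i(\ol{\bco}_i\cap\Sigma_i)$, so it is singular exactly where $\ol{\bCo}$ is. I expect this last implication to be the \emph{main obstacle}: transversality by itself only yields smoothness over the smooth strata, and to transport the singularity type of $\ol{\bco}_i$ one genuinely needs the submersion onto a normal slice, which is exactly where the regularity hypothesis $(\gfrak^\vee)^\rho=0$ is used most essentially.
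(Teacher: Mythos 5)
Your proposal is correct and follows essentially the same route as the paper: the paper derives the corollary from the six-term Lefschetz-duality sequence (which at a regular $\rho$ collapses to your four-term one), identifies $T_{[\rho]}\Rep(\dot S,G,\bCo)$ with $\ker(\epsilon)_{G^\rho}$, performs the same Euler-characteristic dimension count, and invokes Lemma \ref{lemma:properness} for the orbifold structure, leaving the normal-slice argument for the singular locus of $\ol{\bCo}$ implicit exactly as you spell it out. Your reading of surjectivity of $\epsilon$ as transversality of $\ev$ and the stratified slice argument for the closure case are precisely the intended details.
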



\subsection{Flat vector bundles}

Let $\KK=\RR,\CC$ and consider first $G=\GL_N(\KK)$.
To every flat principal $G$-bundle $\xi$ on $\dot{S}$
we can associate a vector bundle 
$V=\xi\times_G \KK^N$ of rank $N$ endowed with a natural flat connection $\nabla$
in such a way that
the monodromy of $(V,\nabla)$ coincides with that of $\xi$.

Vice versa, given a flat vector bundle $(V,\nabla)$, we can construct
the associate flat principal $G$-bundle $\xi$ using the same locally constant
transition functions as $V$, so that $\xi$ has the same monodromy as $V$.

This establishes a correspondence
\[
\xymatrix@R=0in{
\Flat(\dot{S},\KK^N)\ar@/^0.5pc/[rr] && \ar@/^0.5pc/[ll] 
\Flat(\dot{S},\GL_N(\KK)).
}
\]
where $\Flat(\dot{S},\KK^N)$ is the set of isomorphism classes of flat vector bundles of rank $N$.
An analogous correspondence holds for framed flat bundle, or for flat bundles with monodromy at the punctures in prescribed conjugacy classes.

For $G=\SL_N(\KK)$, the correspondence is between
flat principal $\SL_N$-bundles and flat vector bundles $V$ of rank $N$
endowed with a trivialization of their determinant $\det(V)=\Lambda^N V$.

Similarly, flat $\PGL_N$-bundles correspond to flat $\KK\PP^{N-1}$-bundles.
Such a $\KK\PP^{N-1}$-bundle $\PP$ need not be a projectivization of a flat
vector bundle, since its monodromy need not lift to $\GL_N(\KK)$.

In the real case, $\PSL_{2N+1}(\RR)=\PGL_{2N+1}(\RR)$; whereas
flat $\PSL_{2N}(\RR)$-bundles correspond
to flat orientable $\RR\PP^{2N-1}$-bundles.

\subsection{The case of $\PSL_2$}

Assume now that $G/Z=\PSL_2(\CC)$, namely that $G=\SL_2(\CC),\PSL_2(\CC)$,
and so $\gfrak=\mathfrak{sl}_2(\CC)$.
Then to each flat principal $G$-bundle $\xi\rar\dot{S}$ we can associate
a flat $\CC\PP^1$-bundle $\PP:=\xi\times_G \CC\PP^1$ on $\dot{S}$.

\begin{remark}
The image of $\rho$ is Zariski-dense if and only if
$\rho$ is irreducible, namely
if and only if no point of $\CC\PP^1$ is fixed by $\rho(\pi)$. 
The same holds for
$G/Z=\PSU_2\subset \PSL_2(\CC)$. In case $G/Z=\PSL_2(\RR)$, such Zariski-density
can be expressed in terms of non-existence of fixed points in $\ol{\HH}$.
\end{remark}


An easy consequence of Corollary \ref{cor:relative-smoothness}
is the following.



\begin{corollary}[Smoothness for $\PSL_2$]\label{cor:smoothness-psl2}
Let $G/Z$ be $\PSL_2(\CC),\ \PSU_2,\ \PSL_2(\RR)$ and let
$\bCo=(\co_1,\dots,\co_n)$ an $n$-uple of conjugacy classes of elements of $G$.
\begin{itemize}
\item[(a)]
If $G/Z=\PSU_2$, then 
$\Rep_b^{r}(\dot{S},G,\bCo)$
and
$\Rep^{r}(\dot{S},G,\bCo)$ 
are smooth orbifolds.
\item[(b)]
If $G/Z=\PSL_2(\CC),\PSL_2(\RR)$, then 
$\rho\in \Rep_b^{r}(\dot{S},G,\ol{\bCo})$
and $[\rho]\in\Rep^{r}(\dot{S},G,\ol{\bCo})$
are singular points if and only if
there exists $i$ such that
$\rho(\pa_i)\in Z$ and $\co_i$ consists of
non-central unipotent elements.
%
\end{itemize}
\end{corollary}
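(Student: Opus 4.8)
The plan is to deduce both statements directly from Corollary \ref{cor:relative-smoothness}, whose last assertion identifies the singular locus of $\Rep^r(\dot{S},G,\ol{\bCo})$ with the set of classes $[\rho]$ for which some boundary value $\rho(\pa_i)$ lies in the singular locus of the corresponding closed set $\ol{\co}_i$. This reduces the whole problem to a purely local computation inside $G$: for each of the relevant groups I would classify the conjugacy classes, compute their closures, and locate the singular points of those closures. Once this dictionary is in hand, a point $[\rho]$ is singular precisely when some $\rho(\pa_i)$ hits a singular point of $\ol{\co}_i$, and parts (a) and (b) become translations of this fact.

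For part (a) the key observation is that $\PSU_2$ (and its double cover $\SU_2$) is compact. Hence every conjugacy class is an orbit of a compact group acting by conjugation, so it is automatically a closed smooth submanifold; in particular $\co_i=\ol{\co}_i$ and the singular locus of each $\ol{\co}_i$ is empty. Feeding this into Corollary \ref{cor:relative-smoothness} shows that $\Rep_b^r(\dot{S},G,\bCo)$ and $\Rep^r(\dot{S},G,\bCo)$ have empty singular locus, i.e.\ are smooth orbifolds.

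For part (b) I would run through the conjugacy classes of $\SL_2(\CC),\PSL_2(\CC),\SL_2(\RR),\PSL_2(\RR)$ organized by the trace (or by $\tr^2$ in the $\PSL$ case). The semisimple non-central classes — elliptic and hyperbolic over $\RR$, all diagonalizable classes with distinct eigenvalues over $\CC$ — are cut out by $\tr\neq\pm 2$ (respectively $\tr^2\neq 4$) and are therefore closed smooth affine quadrics; the central elements are isolated, hence smooth points. The only remaining classes are the non-central unipotent ones, and here lies the essential computation: writing $u=a-1$, the closure of the unipotent class with $\tr=2$ is the quadric cone $u^2+bc=0$, which is singular exactly at its vertex $\id$, and analogously the closure of any non-central unipotent class acquires as its unique singular point the central element to which the unipotents degenerate. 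I expect this local identification of the singular point of a unipotent closure with the central limit to be the only real content of the argument; the rest is bookkeeping of how the real classes split (e.g.\ $R_\theta$ versus $R_{-\theta}$, or positive versus negative unipotents) without affecting the closure analysis. Combining this with Corollary \ref{cor:relative-smoothness} yields exactly the stated criterion: $[\rho]$ is singular if and only if some $\rho(\pa_i)\in Z$ while $\co_i$ consists of non-central unipotent elements.
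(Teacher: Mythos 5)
Your proposal is correct and follows essentially the same route as the paper, which derives the statement from Corollary \ref{cor:relative-smoothness} and then observes (in the remark following the corollary) that the only non-closed conjugacy classes are the non-central unipotent ones, whose closures are quadric cones singular exactly at the central element. Your explicit computation $u^2+bc=0$ and the compactness argument for $\PSU_2$ fill in precisely the local analysis the paper leaves implicit.
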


Condition (b) in the above lemma can be easily understood by remembering
that the only non-closed conjugacy class in $\PSL_2(\CC)$ is
the class $\co$ of non-trivial unipotent elements, whose closure
contains the identity as a singular point of $\ol{\co}$.


\subsection{Euler number of $\PSL_2(\RR)$-representations}

Let $\wti{\PSL}_2(\RR)$ be the universal cover of $\PSL_2(\RR)$
and let $\ZZ \cdot\zeta\subset\wti{\PSL}_2(\RR)$ be its center,
where $\zeta=\exp(R)$ and
\[
R=\pi\left(
\begin{array}{cc}
0 & -1\\
1 & 0
\end{array}
\right)\in\psl_2(\RR)
\]
is an infinitesimal generator of the subgroup of
(counterclockwise) rotations that fix $i\in\HH$.
We define the {\it{rotation number}} $\rot:\wti{\PSL}_2(\RR)\rar\RR$
as follows.

If $\ti{g}\in\wti{\PSL}_2(\RR)$ is elliptic, then
$\ti{g}$ is conjugate to $\exp(r\cdot R)$ for a unique $r\in\RR$.
In this case, we define $\rot(\ti{g}):=r$,
so that $\rot(\ti{g})\in \ZZ \iff \ti{g}\in \ZZ\cdot\zeta$.

If $\ti{g}\in\wti{\PSL}_2(\RR)$ is not elliptic,
then there exists a unique $r\in\ZZ$ such that
$\ti{g}$ can be connected to $r\cdot\zeta$ through a
continuous path of non-elliptic elements. In this case,
we define $\rot(\ti{g}):=r$.

Define also a ``fractional'' rotation number as
$\{\rot\}:\PSL_2(\RR)\rar [0,1)$
by requiring that $\rot(\ti{g})-\{\rot\}(g)\in\ZZ$,
where $\ti{g}$ is any lift of $g$ to $\wti{\PSL}_2(\RR)$.

It can be easily seen that $\rot$  and $\{\rot\}$
are invariant under conjugation by elements of $\PSL_2(\RR)$
and that
the rotation number
$\rot$ is continuous but $\{\rot\}$ is not (see also \cite{burger-iozzi-wienhard:maximal} for
more properties of the rotation number).

Here we adopt a result by Burger-Iozzi-Wienhard \cite{burger-iozzi-wienhard:maximal}
as a definition of Euler number for a representation $\rho:\pi_1(\dot{S})\rar\PSL_2(\RR)$.

\begin{definition}
Assume $n>0$. The {\it{Euler number of $\rho:\pi\rar\PSL_2(\RR)$}} is
\[
\Eu(\rho):=-\sum_{i=1}^n r_i\in\RR
\]
where $\ti{\rho}:\pi\rar\wti{\PSL}_2(\RR)$ is any lift of $\rho$ and
$r_i:=\rot(\ti{\rho}(\gamma_i))\in \RR$.
%
%
\end{definition}

Notice that $\Eu(\rho)+\NO{\bm{\{r\}}}\in \ZZ$, where $\{r_i\}:=\{\rot\}(\rho(\gamma_i))\in[0,1)$.
%
%

\begin{remark}\label{rmk:identity}
If a representation is obtained as a composition
$\rho':\pi_1(S\setminus\{p_1,\dots,p_n\},b)\rar\pi_1(S\setminus\{p_1,\dots,p_k\},b)\arr{\rho}\PSL_2(\RR)$ with $0\leq k<n$,
then $\{r_{k+1}\}=\dots=\{r_n\}=0$ and $\Eu(\rho')=\Eu(\rho)$. This allows to coherently define the Euler number
in the case of an unpunctured surface.
\end{remark}

\begin{remark}
Suppose that $[\rho]$ is a singular point of $\Rep(\dot{S},\PSL_2(\RR))$.
Then $\rho$ must fix a point of $\RR\PP^1$ 
or it must be Abelian.
In the former case, there exists a lift $\ti{\rho}$
whose action on the universal cover $\wti{\RR\PP^1}$
fixes a point. Hence,
$\rot\circ\ti{\rho}=0$ and so
$\Eu(\rho)=0$ and $\{r_j\}=0$ for all $j$.
In the latter case, $\rho$ Abelian implies $\Eu(\rho)=0$ and so $\NO{\bm{\{r\}}}\in\ZZ$.
\end{remark}

Being invariant under conjugation by elements of $\PSL_2(\RR)$, the Euler number descends to
a continuous map
\[
\Eu:\Rep(\dot{S},\PSL_2(\RR))\lra \RR
\]
and we call $\Rep(\dot{S},\PSL_2(\RR))_e$ the preimage $\Eu^{-1}(e)$.
If $n=0$, then $\Eu$ is integral and so it is constant on connected components
of $\Rep(S,\PSL_2(\RR))$. Moreover, the conjugation by an element in $\PGL_2(\RR)\setminus\PSL_2(\RR)$ induces the isomorphism
$\Rep(S,\PSL_2(\RR))_e\cong \Rep(S,\PSL_2(\RR))_{-e}$.

For closed surfaces the topology of $\Rep(S,\PSL_2(\RR))_e$ with $e\neq 0$ is completely determined.

\begin{theorem}[Topology of representation spaces of closed surfaces in $\PSL_2(\RR)$]\label{thm:rep-closed}
Assume $n=0$ and $g(S)\geq 2$.
\begin{itemize}
\item[(a)]
Every $\rho\in\Rep_b(S,\PSL_2(\RR))$ satisfies $|\Eu(\rho)|\leq -\chi(S)$ (Milnor \cite{milnor:inequality}, Wood \cite{wood:milnor}).
If $\Eu(\rho)=-\chi(S)$, then $\rho$ is the monodromy of a hyperbolic metric (Goldman \cite{goldman:components}).
\item[(b)]
$\Rep(S,\PSL_2(\RR))_e\neq\emptyset$
if and only if
\[
e\in \ZZ\cap \Big[ \chi(S),\,-\chi(S)\,\Big]
\]
and, in this case, it is also connected
(Goldman \cite{goldman:components}).
\item[(c)]
If $e\neq 0$, then $\Rep(S,\PSL_2(\RR))_e$ is smooth.\\
For $e>0$ the manifold $\Rep(S,\PSL_2(\RR))_e$ is real-analytically diffeomorphic to
a complex vector bundle of rank $-\frac{3}{2}\chi(S)-m$ over $\Sym^m(S)$,
where $m=-\chi(S)-e$ (Hitchin \cite{hitchin:self-duality}).
\end{itemize}
\end{theorem}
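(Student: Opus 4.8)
The three assertions rest on different bodies of technique, so the plan is to prove (a) by bounded cohomology and (b)--(c) by the nonabelian Hodge correspondence, using the Higgs-bundle picture as the common engine for the topological statements. Throughout I fix the complex structure $I$ so that the Hitchin--Simpson dictionary applies.

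For (a) I would realize $\Eu(\rho)$ as the pairing of the bounded Euler class with the fundamental class. Lifting the surface relation $[A_1,B_1]\cdots[A_{g(S)},B_{g(S)}]=\id$ to $\wti{\PSL}_2(\RR)$, the product of the lifted commutators equals the central element $\zeta^{-\Eu(\rho)}$, so that $\Eu(\rho)$ is exactly the obstruction measured by the (bounded) Euler class $e_b$. Since $e_b$ has Gromov norm $\tfrac12$ and the simplicial volume of $S$ is $-2\chi(S)$, the pairing yields the sharp bound $|\Eu(\rho)|\leq \tfrac12\cdot(-2\chi(S))=-\chi(S)$, which is Milnor--Wood. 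For the equality case I would follow Goldman: $\Eu(\rho)=-\chi(S)$ forces each lifted commutator to attain its extremal rotation number, which is only possible when $\rho$ is discrete and faithful with cocompact image, i.e. the holonomy of a hyperbolic metric. (In the Higgs language below this is precisely the locus where the off-diagonal Higgs field is nowhere vanishing, namely Hitchin's Fuchsian--Teichm\"uller component.)

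For (b) and (c), by Hitchin's correspondence $\Rep(S,\PSL_2(\RR))_e$ is identified with the fixed locus of the involution $[(E,\Phi)]\leftrightarrow[(E,-\Phi)]$ inside the moduli space of stable $\PSL_2$-Higgs bundles of the relevant topological type. A stable fixed point with $\Phi\neq 0$ splits as $E=L\oplus L^{-1}$ into the $\pm$-eigenbundles of the linearized involution, with $\Phi$ off-diagonal; its two components are sections $\beta\in H^0(L^{-2}K_S)$ and $\gamma\in H^0(L^2K_S)$, the degrees $\deg(L^{-2}K_S)=-\chi(S)-e$ and $\deg(L^2K_S)=-\chi(S)+e$ depending only on $e$ (the case of odd $e$ being handled by the $\PGL_2$-twisted version). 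For $e>0$ one has $\deg L>0$, so stability rules out $\beta=0$, which would make $L$ a $\Phi$-invariant destabilizing subbundle of positive degree; hence $\beta\neq 0$ and its divisor lies in $\Sym^m(S)$ with $m=-\chi(S)-e$. The assignment $(E,\Phi)\mapsto \mathrm{div}(\beta)$ then fibers the component over $\Sym^m(S)$: fixing the divisor pins down $L$ up to finite ambiguity, and the fiber is the vector space $H^0(L^2K_S)\ni\gamma$, of dimension $-\tfrac32\chi(S)-m$ by Riemann--Roch (the degree exceeding $2g-2$, so $H^1$ vanishes). This exhibits $\Rep(S,\PSL_2(\RR))_e$ as a complex vector bundle of the asserted rank over the connected base $\Sym^m(S)$; connectivity in (b) follows from contractibility of the fibers. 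Finally, for $e\neq 0$ no representation can fix a point of $\RR\PP^1$ or be abelian (such representations have $\Eu=0$, as noted above), so every $\rho$ is irreducible, hence regular, and Proposition \ref{prop:smoothness} gives smoothness in (c).

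The main obstacle is the analytic substance behind (c): upgrading a set-theoretic bijection to a \emph{real-analytic} diffeomorphism requires the full nonabelian Hodge correspondence --- existence, uniqueness and real-analytic dependence of the harmonic metric on stable Higgs bundles --- together with a careful description of the fixed locus of the sign involution and a check that the fibration is a genuine holomorphic vector bundle rather than a mere fiber space. Controlling this global structure, and independently re-deriving the connectivity and component count, is most cleanly achieved through the Morse--Bott theory of the Hitchin function $[(E,\Phi)]\mapsto\|\Phi\|_{L^2}^2$, whose downward gradient flow retracts each component onto its minimal critical submanifold. Verifying that these minima are exactly the symmetric products described above, and computing the Morse indices to confirm the vector-bundle ranks, is the technical heart of the argument.
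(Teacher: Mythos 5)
The paper gives no proof of Theorem \ref{thm:rep-closed}: it is stated as a survey of results of Milnor, Wood, Goldman and Hitchin, with citations in place of arguments, so your sketch can only be measured against the cited literature (and against the parabolic analogues the paper does prove later). As an outline of those classical proofs it is largely accurate: the bounded-cohomology derivation of $|\Eu(\rho)|\leq-\chi(S)$ from the norm $\tfrac12$ of the bounded Euler class and the simplicial volume $-2\chi(S)$ of $S$ is Gromov's proof of Milnor--Wood, and your description of the $\sigma$-fixed locus --- the splitting $E=L\oplus L^{-1}$, stability forcing $\beta\neq 0$, the divisor map to $\Sym^m(S)$ with $m=-\chi(S)-e$, and the fiber $H^0(L^2K_S)$ of dimension $-\tfrac32\chi(S)-m$ by Riemann--Roch with vanishing $H^1$ --- is exactly Hitchin's argument, which the paper redoes in the parabolic setting in Lemma \ref{lemma:real} and Proposition \ref{prop:topology}.

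Two points are genuine gaps rather than compressions. First, your Higgs-bundle argument establishes non-emptiness and connectivity only for $e\neq 0$, whereas part (b) asserts connectivity for every admissible integer $e$ including $e=0$, where the correspondence breaks down at reducible representations (strictly semistable Higgs bundles); the paper itself flags this component as subtler, and Goldman's connectedness proof there is a separate, direct topological argument that cannot be extracted from a fibration over a symmetric product. Second, the equality case of (a) is asserted rather than proved: the claim that extremal rotation numbers of the lifted commutators force $\rho$ to be discrete, faithful and cocompact is precisely the content of Goldman's theorem and is far from automatic (your parenthetical identification with the nowhere-vanishing-Higgs-field locus is also only available once the correspondence of (c) is in place, so it cannot serve as the proof of (a)). A smaller issue: recovering $L$ from $\mathrm{div}(\beta)$ leaves a $\Pic^0(S)[2]$ ambiguity, and for the statement as written --- $\Rep(S,\PSL_2(\RR))_e$ a vector bundle over $\Sym^m(S)$ itself, not over a $2^{2g}$-fold cover --- you need to say that passing from $\SL_2$ to $\PSL_2$ quotients exactly this torsor; this is the closed-surface analogue of Corollary \ref{cor:quotient-topology}, and the connectedness of the quotient does not follow merely from "contractibility of the fibers" but from connectedness of $\Sym^m(S)$ together with that identification.
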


%

Let now $n>0$. Given an $n$-uple $\bCo=(\bco_1,\dots,\bco_n)$ of conjugacy classes
in $\PSL_2(\RR)$, we call $\{r_i\}$ the rotation number of any element in $\bco_i$.
Moreover, we denote by $P_{hyp}$ 
(resp. $P_{ell}$, $P_+$, $P_-$, $P_0$)
the subset of points $p_i\in P$ such that $\bco_i$ is hyperbolic
(resp. elliptic, positive unipotent, negative unipotent, the identity)
and we let $s_-=\# P_-$ (resp. $s_0=\# P_0$).

The restriction
\[
\Eu_{\bCo}:\Rep(\dot{S},\PSL_2(\RR),\bCo)\lra\RR
\]
of $\Eu$ has the property that $\Eu_{\bCo}+\NO{\bm{\{r\}}}\in \ZZ$, where
$\NO{\bm{\{r\}}}$ only depends on $\bCo$.
Thus, $\Eu_{\bCo}$ is also constant on connected components of
$\Rep(\dot{S},\PSL_2(\RR),\bCo)$.

As above, we denote by $\Rep(\dot{S},\PSL_2(\RR),\bCo)_e$
the preimage $\Eu_{\bCo}^{-1}(e)$ and we observe that
the conjugation
by an element of $\PGL_2(\RR)\setminus \PSL_2(\RR)$
induces the isomorphism
$\Rep(\dot{S},\PSL_2(\RR),\bCo)_e
\cong \Rep(\dot{S},\PSL_2(\RR),\bCo^{-1})_{-e}$,
where $\bco_i^{-1}=\{g^{-1}\in\PSL_2(\RR)\,|\,g\in\bco_i\}$.

In analogy with Theorem \ref{thm:rep-closed} in the closed case,
the following result holds in the punctured case.

\begin{theorem}[Topology of relative representation spaces of punctured surfaces in $\PSL_2(\RR)$]\label{thm:rep-punctured}
Assume $n>0$ and let $\bCo$ and $\bm{\{r\}}$ be as above.
\begin{itemize}
\item[(a)]
The image of $\Eu:\Rep(\dot{S},\PSL_2(\RR))\rar\RR$ is the interval $\big[\chi(\dot{S}),-\chi(\dot{S})\big]$.
If $\Eu(\rho)=-\chi(\dot{S})$, then all $\rho(\pa_i)$'s are hyperbolic or positive unipotent elements for all $i$ and $\rho$ is the monodromy of a hyperbolic metric with geodesic boundary components and cusps (Burger-Iozzi-Wienhard \cite{burger-iozzi-wienhard:maximal}).
\item[(b)]
Assume $e>0$ and fix conjugacy classes $\bco_1,\dots,\bco_n\subset\PSL_2(\RR)$.\\
Then $\Rep(\dot{S},\PSL_2(\RR),\bCo)_e\neq\emptyset$ if and only if
\[
e+\NO{\bm{\{r\}}} +s_0+s_-\in \ZZ\cap \left( 0,-\chi(\dot{S})\right]
\]
%
%
and, in this case, it is smooth and connected.
%
\end{itemize}
Assume now that 
$e$ satisfies the hypotheses in (b).
\begin{itemize}
\item[(c)]
The component $\Rep(\dot{S},\PSL_2(\RR),\bCo)_e$ is 
real-analytically diffeomorphic to the complement of 
$s_-$ affine subbundles of codimension $1$
inside a holomorphic affine bundle
of rank $-\frac{3}{2}\chi(S)+n-m+s_-$ 
over $\Sym^{m-(s_0+s_-)}\Big(S\setminus (P_{hyp}\cup P_+)\Big)$, 
where $m=-\chi(\dot{S})-e-\NO{\bm{\{r\}}}$.
\item[(d)]
The locus $\Rep(\dot{S},\PSL_2(\RR),\ol{\bCo})_e$ is homeomorphic to
an affine holomorphic bundle of rank $-\frac{3}{2}\chi(S)+n-m+s_-$
over $\Sym^{m-(s_0+s_-)}(S\setminus P_{hyp})$.
%
%
\item[(e)]
If $c_i$ is the class of positive unipotents and $\bCo^0$ is obtained
from $\bCo$ by replacing $c_i$ with $\{\id\}$, then
$\Rep(\dot{S},\PSL_2(\RR),\ol{\bCo}^0)_e$ includes
in $\Rep(\dot{S},\PSL_2(\RR),\ol{\bCo})_e$ as the preimage over
$p_i+\Sym^{m-(s_0+s_-)-1}(S\setminus P_{hyp})\subset \Sym^{m-(s_0+s_-)}(S\setminus P_{hyp})$.\\
If $c_i$ is the class of negative unipotents and $\bCo^0$ is obtained
from $\bCo$ by replacing $c_i$ with $\{\id\}$, then
$\Rep(\dot{S},\PSL_2(\RR),\ol{\bCo}^0)_e$ includes
in $\Rep(\dot{S},\PSL_2(\RR),\ol{\bCo})_e$ as an affine subbundle over
$\Sym^{m-(s_0+s_-)}(S\setminus P_{hyp})$
of codimension $1$.
%
%
%
%
%
%
%
\end{itemize}
\end{theorem}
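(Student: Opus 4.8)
The plan is to transport the entire statement across the nonabelian Hodge correspondence of Theorem \ref{thm:correspondence}, which matches $\Rep(\dot{S},\PSL_2(\RR),\bCo)_e$ with the locus of stable parabolic $\SL_2$-Higgs bundles $(E_\bullet,\Phi)$ in $\Higgs^s(S,\bm{w},2,\Det,\bCoa)$ fixed by the involution $(E_\bullet,\Phi)\mapsto(E_\bullet,-\Phi)$, the weights $\bm{w}$ and residue classes $\bCoa$ being read off from $\bCo$. Part (a) I would not reprove: the two-sided Milnor--Wood bound and the identification of the maximal case $\Eu=-\chi(\dot{S})$ with monodromies of complete hyperbolic metrics are exactly the results of Burger--Iozzi--Wienhard \cite{burger-iozzi-wienhard:maximal}. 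All the remaining content is on the Higgs side, in parts (b)--(e), and the diffeomorphism of (c) is real-analytic precisely because the correspondence is (Konno, Biquard--Boalch).

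For $e>0$, Lemma \ref{lemma:real} guarantees $\Phi\neq0$ on these fixed points, since the locus $\Phi=0$ is unitary and forces $\Eu=0$. I would then establish the normal form of Proposition \ref{prop:real}: a stable real parabolic Higgs bundle with $\Phi\neq0$ splits $\SL_2$-compatibly as $E_\bullet\cong N_\bullet\oplus N_\bullet^{-1}$ with $\det\cong\Det$ and $\Phi$ strictly off-diagonal, with entries $b$ and $c\colon N_\bullet\to N_\bullet^{-1}\otimes K_S$, where $\deg N_\bullet>0$. Stability then forces $c$ to be a \emph{nonzero} section (otherwise $N_\bullet$ destabilizes), while $b$ remains unconstrained by stability. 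A parabolic-degree computation expresses $\deg N_\bullet$ through $e$ and the weights, so that $c$ lies in a parabolic line bundle of parabolic degree $m=-\chi(\dot{S})-e-\NO{\bm{\{r\}}}$, and its zero divisor $\mathrm{div}(c)$ is the point of the symmetric product.

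The map $(E_\bullet,\Phi)\mapsto\mathrm{div}(c)$ then gives the fibration over $\Sym^{m-(s_0+s_-)}$ of the relevant open surface, and over a fixed divisor the remaining modulus is the free entry $b$, a section of a parabolic twist of $N_\bullet^2\otimes K_S$; a Riemann--Roch count on $S$ gives its dimension as the asserted rank $-\frac{3}{2}\chi(S)+n-m+s_-$. Prescribing nonzero residues turns this linear fiber into an \emph{affine} one, since fixing $\Res_{p_i}(\Phi)\in\bcoa_i$ pins the leading term of the Higgs field at each semisimple puncture. The three unipotent cases are the delicate point and dictate the shape of the statement: a hyperbolic or positive-unipotent puncture forces $c$ to be nonzero at $p_i$, so $p_i$ is excluded from the base, giving $S\setminus(P_{hyp}\cup P_+)$ in (c); a negative-unipotent puncture imposes one affine condition on $b$, deleting a codimension-one affine subbundle, whence the $s_-$ removed subbundles; an identity puncture lowers the divisor degree by one. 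Smoothness and connectedness in (b) follow because the space fibers, with connected affine fibers, over the connected symmetric product, while Corollary \ref{cor:smoothness-psl2}(b) shows that the only candidate singular real points are absent here.

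For (d)--(e) I would let the data degenerate to $\ol{\bCo}$, i.e. allow the semisimple residues to become nilpotent or zero on the Higgs side. Filling in the positive-unipotent punctures returns $P_+$ to the base (so $S\setminus P_{hyp}$) and fills the codimension-one subbundles previously deleted for negative unipotents, producing the honest affine holomorphic bundle of (d). The explicit inclusions in (e) then track the two degenerations separately: replacing a positive-unipotent class by $\{\id\}$ forces a zero of $c$ at $p_i$, placing the limit over the shifted sublocus $p_i+\Sym^{m-(s_0+s_-)-1}(S\setminus P_{hyp})$; replacing a negative-unipotent class by $\{\id\}$ places it along the corresponding deleted codimension-one subbundle. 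The main obstacle throughout is precisely this puncture bookkeeping: matching each conjugacy class in $\PSL_2(\RR)$ (hyperbolic, elliptic, the two unipotent classes, the identity) with the correct parabolic weight and with a semisimple versus nilpotent residue, checking that the parabolic-degree and Riemann--Roch tallies reproduce $m$, the rank, and the symmetric-product multiplicities exactly, and controlling the passage to the closure, where the correspondence itself degenerates and must be supplemented by a direct limit analysis.
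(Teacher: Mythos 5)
Your proposal follows essentially the same route as the paper: part (a) is quoted from Burger--Iozzi--Wienhard, and (b)--(e) are obtained by transporting the statement through the correspondence of Theorem \ref{thm:correspondence} and then reading off the topology of the $\sigma$-fixed locus from the map $(E_\bullet,\eta,\Phi)\mapsto(\mathrm{div}(\phi),\det(\Phi))$, exactly as in Proposition \ref{prop:topology} and Corollary \ref{cor:quotient-topology}. The only slip is in the negative-unipotent bookkeeping, where the condition is both $p_j\in\mathrm{div}(\phi)$ (lowering the symmetric-product degree, which is why the base is $\Sym^{m-(s_0+s_-)}$ rather than $\Sym^{m-s_0}$) \emph{and} $\psi(p_j)\neq 0$ (the removed codimension-one affine subbundle), whereas you attribute only the latter to these punctures.
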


Claims (c-d-e) are consequence of
Theorem \ref{thm:correspondence}, which in turn relies on Proposition \ref{prop:topology}
and Corollary \ref{cor:quotient-topology}. Claim (b) easily follows
from (c-d-e).

The case of some $\bco_i=\{\id\}$ can be also dealt with as in Remark \ref{rmk:identity}.

\subsection{Hyperbolic metrics}

%
%

Let $\bm{\ell}=(\ell_1,\dots,\ell_n)$
with $\ell_i=\sqrt{-1}\th_i$ and $\th_i>0$ for $i=1,\dots,k$ and $\ell_i\geq 0$ for $i=k+1,\dots,n$.
We are interested in isotopy classes of {\it{hyperbolic metrics of boundary type $\bm{\ell}$}}
on $\dot{S}$, i.e. metrics on curvature $-1$ on $\dot{S}$, whose completion has
a conical singularity of angle $\th_i$ at $p_i$ for $i=1,\dots,k$
and a boundary component of length $\ell_i$ (resp. a cusp if $\ell_i=0$)
instead of the puncture $p_i$ for $i=k+1,\dots,n$.

We assume that the quantity
\[
e_{\bm{\ell}}:=-\chi(\dot{S})-\sum_{i=1}^k\frac{\th_i}{2\pi}=
-\chi(S)-\sum_{i=1}^k\left(\frac{\th_i}{2\pi}-1\right)
\]
is positive, since 
hyperbolic metrics of boundary type $\bm{\ell}$ have total area $e_{\bm{\ell}}>0$ by Gauss-Bonnet.
In this case, denote by $\Y(\dot{S},\bm{\ell})$ the space of isotopy classes of hyperbolic metrics on $\dot{S}$ of boundary
type $\bm{\ell}$.

Surfaces of curvature $-1$ are locally isometric to portions
of the hyperbolic plane $\HH$.
Given a metric $h$ of curvature $-1$ on $\dot{S}$,
consider the pull-back $\ti{h}$ on $\ti{\dot{S}}$.
Since $\ti{\dot{S}}$ is simply-connected, local isometries into $\HH$
glue to give a global {\it{developing map}} $\dev_h:\ti{\dot{S}}\rar\HH$,
which is a local isometry.
Moreover, $\pi$ acts on $\HH$ via
a {\it{monodromy}} homomorphism $\rho_h:\pi\rar\Iso_+(\HH)\cong\PSL_2(\RR)$
and $\dev_h$ is $\pi$-equivariant.
Notice that $\dev_h$ is well-defined up to post-composition with an isometry
of $\HH$, and so also $\rho_h$ is well-defined only as an element
of $\Rep(\dot{S},\PSL_2(\RR))$.

We also observe that $\rho_h$ arises as a monodromy of a flat principal
$\PSL_2(\RR)$-bundle
as follows. 
Pull the trivial $\PSL_2(\RR)$-bundle over $\HH$ back via $\dev_h$
to a (trivializable) $\ti{\xi}_h\rar\ti{\dot{S}}$.
By $\rho_h$-equivariance, it descends to a flat principal
$\PSL_2(\RR)$-bundle $\xi_h\rar\dot{S}$ with
$\hol_{\xi_h}=\rho_h$.

%

%
%
%
%


Then we have a composition of real-analytic maps
\[
\xymatrix@R=0in{
\Y(\dot{S},\bm{\ell})\ar[r]^{\Xi_{\bm{\ell}}\qquad} 
& \Flat(\dot{S},\PSL_2(\RR),\bCo_{\bm{\ell}}) \ar[r]^{\hol\quad}
& \Rep(\dot{S},\PSL_2(\RR),\bCo_{\bm{\ell}})\\
h \ar@{|->}[r] & \xi_h \ar@{|->}[r] & \rho_h
}
\]
where the correspondence between $\ell_i$ and $\bco_i$
is dictated by the following table
\[
%
%
\def\arraystretch{1.3}\begin{array}{|c|c|}
\hline
\text{conjugacy class $\bco_\ell$} & \ell\\
\hline 
\id & 2\pi\sqrt{-1}\cdot\NN_+\\
\text{positive unipotents} & 0\\
\text{hyperbolics $g$ with $|\tr(g)|=2|\cosh(\ell/2)|$} & \RR_+\\
\text{elliptics $g$ with $\{\rot\}(g)=\{\ell/(2\pi\sqrt{-1})\}$} & 2\pi\sqrt{-1}\cdot (\RR_+\setminus\NN_+)\\
\hline
\end{array}
\]
In fact, 
a cusp at $p_i$ corresponds to positive unipotent monodromy
along $\gamma_i$.

%
%
%
%
%


\begin{proposition}[Uniformization components]\label{prop:uniformization}
Let $\th_1,\dots,\th_k>0$ and $\ell_{k+1},\dots,\ell_n\geq 0$
and call $\bm{\ell}=(\sqrt{-1}\th_1,\dots,\sqrt{-1}\th_k,\ell_{k+1},\dots,\ell_n)$.
If $e_{\bm{\ell}}>0$, then
the image of $\hol\circ\Xi_{\bm{\ell}}$ is contained inside
$\Rep(\dot{S},\PSL_2(\RR),\bCo_{\bm{\ell}})_{e_{\bm{\ell}}}$.
%
\end{proposition}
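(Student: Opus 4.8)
The plan is to split the statement into two parts: that $\rho_h(\pa_i)$ lies in the prescribed class $\bco_{\ell,i}$ for every $i$, so that $[\rho_h]\in\Rep(\dot{S},\PSL_2(\RR),\bCo_{\bm{\ell}})$; and that $\Eu(\rho_h)=e_{\bm{\ell}}$. The first part is local and is essentially the content of the table above. Near the $i$-th end the developing map $\dev_h$ is, in suitable polar-type coordinates, the restriction of a rotation by $\th_i$ at a cone point, of a parabolic transformation at a cusp, or of a hyperbolic translation of length $\ell_i$ at a geodesic boundary; reading off the holonomy of a loop $\gamma_i$ winding once around $p_i$ shows that $\rho_h(\pa_i)$ is respectively elliptic with $\{\rot\}=\{\th_i/2\pi\}$, positive unipotent, or hyperbolic with $|\tr|=2\cosh(\ell_i/2)$, which are exactly the classes $\bco_{\ell,i}$.

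For the Euler number I would argue through circle bundles. Recall that $\rot\colon\wti{\PSL}_2(\RR)\to\RR$ is the Poincar\'e translation number for the action on the universal cover $\RR$ of $\partial\HH\cong\RR\PP^1$, so that $\Eu(\rho_h)=-\sum_i r_i$ is precisely the relative Euler number of the flat $\RR\PP^1$-bundle $E_h:=\xi_h\times_{\PSL_2(\RR)}\RR\PP^1$ over $\dot{S}$, taken with respect to the trivializations of $E_h$ near the ends that are singled out by the classes $\bco_{\ell,i}$. The geometric input is that $\dev_h$ identifies $E_h$, as a topological circle bundle, with the unit tangent bundle $T^1\dot{S}$ of the metric $h$: the fibrewise Gauss map sending a unit direction at $x$ to the forward endpoint on $\partial\HH$ of the corresponding $h$-geodesic ray is a fibre-preserving homeomorphism $T^1\dot{S}\to E_h$ over $\dot{S}$.

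Granting this identification, both quantities are relative Euler numbers of the same circle bundle, and I would compute the tangent-bundle side by Gauss--Bonnet. With respect to the framings induced at the ends by $\dev_h$, the relative Euler number of $T^1\dot{S}$ equals $\frac{1}{2\pi}\mathrm{Area}(h)$, and the Gauss--Bonnet theorem for hyperbolic metrics with cone points, cusps and geodesic boundaries gives $\frac{1}{2\pi}\mathrm{Area}(h)=-\chi(\dot{S})-\sum_{i=1}^k\frac{\th_i}{2\pi}=e_{\bm{\ell}}$. Comparing the two computations yields $\Eu(\rho_h)=e_{\bm{\ell}}$, and the standing hypothesis $e_{\bm{\ell}}>0$ then places the image of $\hol\circ\Xi_{\bm{\ell}}$ inside the claimed component.

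The step I expect to be the main obstacle is making the two relative Euler numbers literally equal, i.e. checking that the trivialization of $E_h$ attached to $\bco_{\ell,i}$ and the trivialization of $T^1\dot{S}$ coming from $\dev_h$ differ at each end by exactly the amount Gauss--Bonnet attributes there: at a cone point this discrepancy is the elliptic rotation number $\th_i/2\pi$, while at a cusp and at a geodesic boundary the framings must instead be read off from the asymptotic behaviour of $\dev_h$ near the parabolic or hyperbolic fixed point(s) on $\partial\HH$. Carrying out these local models and tracking orientations, so that the sign built into $\Eu(\rho)=-\sum_i r_i$ matches the orientation on $T^1\dot{S}$, is where the genuine work lies. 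Alternatively, since $\Y(\dot{S},\bm{\ell})$ is connected and $\Eu\circ\hol\circ\Xi_{\bm{\ell}}$ is continuous, hence constant, one may reduce to evaluating $\Eu(\rho_h)$ at a single convenient metric, though the core circle-bundle computation remains unavoidable.
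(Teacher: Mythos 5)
Your proposal is correct, but it reaches the Euler number computation by a genuinely different route from the paper. The paper's proof is a one-line reduction: it invokes the identification (due to Burger--Iozzi--Wienhard) of the rotation-number Euler class $\Eu$ with the Toledo invariant, so that $\Eu(\rho_h)=\frac{1}{2\pi}\int_{\dot{S}}\dev_h^*\omega_{\HH}=\frac{1}{2\pi}\mathrm{Area}(h)$ follows at once from the developing map being a local isometry, and Gauss--Bonnet then gives $e_{\bm{\ell}}$; the verification that the boundary monodromies land in $\bCo_{\bm{\ell}}$ is delegated to the table preceding the statement, exactly as in your first paragraph. You instead re-derive the identity $\Eu(\rho_h)=\frac{1}{2\pi}\mathrm{Area}(h)$ in the classical Milnor--Goldman style: interpret $-\sum_i r_i$ as the relative Euler number of the flat $\RR\PP^1$-bundle $\xi_h\times_{\PSL_2(\RR)}\RR\PP^1$, identify that bundle with $T^1\dot{S}$ via the forward-endpoint map composed with $\dev_h$, and compute the tangent-bundle side by Gauss--Bonnet. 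This is more elementary and self-contained, but the entire content of the Burger--Iozzi--Wienhard citation reappears as the step you yourself flag as ``where the genuine work lies'': checking, end by end, that the trivialization determined by the class $\bco_{\bm{\ell},i}$ (equivalently, by the branch of $\rot$ on $\ti{\rho}(\gamma_i)$) and the one induced on $T^1\dot{S}$ by $\dev_h$ differ by precisely the boundary contribution in Gauss--Bonnet, with consistent orientations. That local bookkeeping is classical and can be carried out, so the argument closes, but as written it is a sketch at exactly the point where the paper's citation does the work; your concluding observation that $\Eu\circ\hol\circ\Xi_{\bm{\ell}}$ is constant on the connected space $\Y(\dot{S},\bm{\ell})$ is a genuinely useful simplification, since it reduces the framing analysis to a single explicit metric (e.g.\ one built from standard cone, cusp and funnel models). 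One minor omission in your first paragraph: when some $\th_i\in 2\pi\NN_+$ the boundary monodromy is the identity rather than a nontrivial elliptic, a degenerate case the paper's table records separately.
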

\begin{proof}
After the above discussion, it is enough to notice that
the Euler number $\Eu$ can be identified to the Toledo invariant
(see for instance \cite{burger-iozzi-wienhard:maximal})
and so $\Eu(\hol_h)=\frac{1}{2\pi}\mathrm{Area}(h)$.
\end{proof}


\subsubsection{Questions}
Still little is known in general about the image of
\[
\hol\circ\Xi_{\bm{\ell}}:\Ycal(\dot{S},\bm{\ell})\lra \Rep(\dot{S},\PSL_2(\RR),\bCo_{\bm{\ell}})_{e_{\bm{\ell}}}
\]
beside the fact that it is open for the classical topology and so its complement $\mathcal{C}_{\bm{\ell}}$ is closed.

\begin{question}\label{q:surjective}
For which $\bm{\ell}$ is the image of $\hol\circ\Xi_{\bm{\ell}}$ the whole $\Rep(\dot{S},\PSL_2(\RR),\bCo_{\bm{\ell}})_{e_{\bm{\ell}}}$?
\end{question}

\begin{question}\label{q:zero}
For which $\bm{\ell}$ is $\mathcal{C}_{\bm{\ell}}$ of zero measure?
\end{question}

\begin{question}
For which $\bm{\ell}$ is $\mathcal{C}_{\bm{\ell}}$ a countable union of proper (semi-)algebraic subsets with no internal part?
\end{question}

\begin{question}\label{q:ergodic}
For which $\bm{\ell}$ does the mapping class group of $(S,P)$ act ergodically on $\Rep(\dot{S},\PSL_2(\RR),\bCo_{\bm{\ell}})_{e_{\bm{\ell}}}$?
\end{question}

Question \ref{q:surjective} has affirmative answer if all the angles are smaller than $\pi$.
Roughly speaking, a possible argument is as follows.\\
Since the angles are smaller than $\pi$, simple closed geodesics on $\dot{S}$ avoid
the conical points and so their lengths are detected by the traces of the monodromy.
Moreover, any pair of pants decomposition of $\dot{S}$ gives rise to Fenchel-Nielsen coordinates (and so $\Ycal(\dot{S},\bm{\ell})$ is diffeomorphic to
$(\RR_+\times\RR)^{3g-3+n}$) and this quickly leads to the injectivity of $\hol\circ\Xi_{\bm{\ell}}$. The collar lemma allows to prove
properness, and one concludes that $\hol\circ\Xi_{\bm{\ell}}$ is indeed a diffeomorphism. In this case, the action
of the mapping class group is properly discontinuous and with finite stabilizers, and so very far from being ergodic.

Concerning Question \ref{q:ergodic}, Goldman has conjectured that the mapping class group of a closed surface $S$
acts ergodically on the components $\Rep(S,\PSL_2(\RR))_e$ with $e\neq 0,\,\pm\chi(S)$.
A positive answer to Question \ref{q:ergodic} would immediately imply a positive answer to Question \ref{q:zero}.
%


\section{Parabolic Higgs bundles}\label{sec:parabolic}

In this section, $S$ will denote a compact connected Riemann surface
endowed with complex structure $I$.

\subsection{Parabolic structures}

Let $E$ be a holomorphic vector bundle
on $S$, which we constantly identify with the locally-free
sheaf of its sections.

\begin{definition}
Let $E$ be a holomorphic vector bundle on $S$.
A {\it{parabolic structure}} on $E$ over $(S,P)$ is
a filtration  
$\RR\ni w\mapsto E_w \subset E(\infty \cdot P)$
of the sheaf $E(\infty\cdot P)$ of sections which are meromorphic at $P$
such that
\begin{itemize}
\item[(a)]
$E_{w}\supseteq E_{w'}$ if $w\leq w'$ {\it{(decreasing)}}
\item[(b)]
for every $w\in\RR$ there exists $\e>0$ such that
$E_{w-\e}=E_{w}$ ({\it{left-continuous}})
\item[(c)]
$E_{0}=E$ and
$E_{w+1}=E_{w}(-P)$ ({\it{normalized}}).
\end{itemize}
We will denote by $E_\bullet$ the datum of the bundle $E$ and
the given parabolic structure.
\end{definition}

\begin{notation}
Denote by $E_{p_i}$ the space of regular germs at $p_i$ of sections of $E$ 
and by $E_{p_i}(\infty\cdot p_i)$ the space of germs
at $p_i$ of sections of $E$ which are meromorphic at $p_i$.
Given a parabolic structure on $E$, the induced filtration on
$E_{p_i}(\infty\cdot p_i)$ is denoted by $w\mapsto E_{p_i,w}$.
\end{notation}

By definition, the {\it{jumps}} in the filtration at $p_i$
occur at those weights $w$ such that 
$E_{p_i,w}\supsetneq E_{p_i,w+\e}$ for all $\e>0$.
Thus, a parabolic structure is equivalent to the datum
of a weights
\[
0\leq w_1(p_i)<w_2(p_i)<\dots<w_{b_i}(p_i)< w_{b_i+1}(p_i)=1
\]
for some $b_i\in[1,\rk(E)+1]$
and a filtration
\[
E_{p_i}=E_{p_i,w_1(p_i)}
\supsetneq\dots\supsetneq E_{p_i,w_{b_i}(p_i)}\supsetneq E_{p_i,w_{b_i+1}(p_i)}=E_{p_i}(-p_i)
\]
for each $p_i\in P$.

\begin{notation}
We use the symbol $\bm{w}$ to denote the collection of
$(w_k(p_i),m_k(p_i))_{i=1}^n$, where 
$m_k(p_i)=\dim(E_{p_i,w_k(p_i)}/E_{p_i,w_{k+1}(p_i)})$ and we will say that
the parabolic bundle $E_\bullet$ is of {\it{type $\bm{w}$}}.
We will also write $\NO{\bm{w}}=(\NO{w(p_1)},\dots,\NO{w(p_n)})$, where
$\NO{w(p_i)}=\sum_{k=1}^{b_i} m_k(p_i)w_k(p_i)$, and we will say that
$\bm{w}$ is {\it{integral}} if $\NO{\bm{w}}\in\NN^n$.
\end{notation}

The {\it{(parabolic) degree}} of $E_\bullet$ is defined as
$\dis
\deg(E_\bullet):=\deg(E)+\sum_{p_i\in P}\NO{w(p_i)}
$.\\

Every holomorphic bundle $E$ can be endowed with a {\it{trivial parabolic
structure}}, by choosing $b_i=1$ and $w_1(p_i)=0$ for all $i=1,\dots,n$.
This provides an embedding on the category of holomorphic bundles on $S$
inside the category of parabolic bundles on $S$.\\

Direct sums, homomorphisms and tensor products of parabolic bundles
are defined as
\begin{align*}
(E\oplus E')_{p_i,w} & :=E_{p_i,w}\oplus E'_{p_i,w}\\
\Hom(E_\bullet,E'_\bullet) & :=\left\{
f\in\Hom(E,E')
\ \Big|
\ w_j(p_i)>w'_k(p_i)\implies f(E_{p_i,w_j(p_i)})\subseteq E'_{p_i,w'_{k+1}(p_i)}
\right\}\\
(E\otimes E')_{p_i,w''}& :=\left(\bigcup_{w+w'=w''}E_{p_i,w}\otimes E'_{p_i,w'}\right)\subset (E\otimes E')_{p_i}(\infty \cdot p_i).
\end{align*}
and we will write $\bm{w}\otimes\bm{w'}$ for the type of a parabolic bundle $E_\bullet\otimes E'_\bullet$
obtained by tensoring $E_\bullet$ of type $\bm{w}$ with $E'_\bullet$ of type $\bm{w'}$.

It is also possible to define a $\mathcal{H}om$-sheaf just by letting
$\mathcal{H}om(E_\bullet,E'_\bullet)(U):=\Hom\left(E_\bullet|_U,E'_{\bullet}|_U\right)$
with
$\mathcal{H}om(E_\bullet,E'_\bullet)_{p_i,w}=\{\text{germs at $p_i$ of morphisms $E_\bullet\rar E'_{\bullet+w}$}\}$
%
and also a dual $E_\bullet^\vee:=\mathcal{H}om(E_\bullet,\Ocal_S)$.

We will say that a homomorphism is {\it{injective}} if it is so
as a morphism of sheaves, namely if it is injective at the general point of $S$,
and {\it{properly injective}} if it is injective but not an isomorphism.
A {\it{parabolic sub-bundle}} of $E_\bullet$ is just a sub-bundle
$F\subseteq E$, endowed with the induced filtration $F_w:=F\cap E_w$;
the quotient bundle $E/F$ can be also endowed with a natural parabolic
structure by letting $(E/F)_w$
be the image of $E_w$ under the natural projection $E\rar E/F$.\\

Since $H^0(U,E_\bullet)=\Hom(\Ocal_U,E_\bullet|_U)=\Hom(\Ocal_U,E|_U)=H^0(U,E)$, sections of $E_\bullet$ are sections of $E$ and
so the same holds for higher cohomology groups.\\

In order to understand parabolic structures, it is enough to localize
the analysis and consider bundles on a disk with parabolic structure
at the origin. The typical setting is the following.

\begin{example}[Flat vector bundles on a punctured disk]\label{example:disk}
Let $N>0$ be an integer and let $\dot{\Delta}=\Delta\setminus\{p\}$ with $p=0$.
Let $\HH\rar\dot{\Delta}$ be the universal cover, defined as
$u\mapsto z=\exp(2\pi \sqrt{-1}\cdot u)$, and let $b\in\dot{\Delta}$ be a base-point.
Call $\wti{V}:=\HH\times\CC^N\rar \HH$
the trivial vector bundle and endow it with the natural connection $\wti{\nabla}$ that
can be expressed as $[\wti{\nabla}]_{\wti{\Vcal}}=d$
with respect to the canonical basis $\wti{\Vcal}=\{\ti{v}_1,\dots,\ti{v}_n\}$ of sections of $\wti{V}$
and a natural holomorphic structure $\ol{\pa}^{\wti{V}}$.\\
Given $T=\exp(-2\pi\sqrt{-1}\cdot M)\in\GL_N(\CC)$, one can lift the natural action
of $\pi_1(\dot{\Delta},b)=\langle\gamma\rangle$ on $\HH$ to an action
on $\wti{V}$ by letting $\gamma\cdot(u,v)=(u+1,T(v))$. The induced
bundle $\dot{V}:=\wti{V}/\pi_1(\dot{\Delta},b)$ 
inherits a flat connection
$\nabla$ that can be written as $[\nabla]_{\Vcal}=d$
with respect to the basis $\Vcal=\{v_1,\dots,v_N\}$ of 
flat $\ol{\pa}^V\!-$holomorphic multi-sections of $\dot{V}$.
Moreover, chosen the standard determination of $\log(z)$ on $\dot{\Delta}$,
\[
\left(
\begin{array}{c} v'_1\\ \vdots\\ v'_N\end{array}
\right):=
\exp\Big(\log(z)\cdot M\Big)
\left(
\begin{array}{c} v_1\\ \vdots\\ v_N\end{array}
\right)
\]
defines a basis $\Vcal'=\{v'_1,\dots,v'_N\}$ of univalent 
$\ol{\pa}^V\!-$holomorphic sections of $\dot{V}$
such that $[\nabla]_{\Vcal'}=d+M\frac{dz}{z}$.
Notice that $\nabla$ and so $\Res_p(\nabla)\in\End(V|_p)$ are not uniquely defined by $T$,
since $\exp(-2\pi\sqrt{-1}\,\bullet):\gl_N(\CC)\rar \GL_N(\CC)$ is not injective: in particular,
the eigenvalues of $\Res_p(\nabla)$ are only well-defined in $\CC/\ZZ$.
%
\end{example}

\subsubsection{Rank $1$}

A parabolic structure at $P$ on a line bundle $L\rar S$
is just the datum of a weight $w(p_i)\in [0,1)$ for each $p_i\in P$, so that
it makes sense to write $L_\bullet=L(\sum_i w(p_i)p_i)$.
The parabolic degree of $L_\bullet$ is simply $\deg(L_\bullet)=\deg(L)+\sum_i w(p_i)$.

\begin{notation}
If $L_\bullet=L(\sum_i w(p_i)p_i)$ is a parabolic line bundle, we denote by
$\floor{L_\bullet}$ its {\it{integral part}}, namely
the underlying line bundle $L$ with trivial parabolic structure
and by $\{L_\bullet\}:=L_\bullet \otimes \floor{L_\bullet}^\vee=\Ocal_S(\sum_i w(p_i)p_i)$ its {\it{fractional part}}.
\end{notation}

We incidentally remark that integral parabolic structures in rank $1$ are trivial.

\begin{example}[Unitary line bundles on a punctured disk]\label{example:u(1)}
Keep the notation as in Example \ref{example:disk} and let $N=1$, $v=v_1$ and $T=\exp(-2\pi\sqrt{-1}\cdot\lambda)\in \U_1$ with $\lambda\in[0,1)$.
With respect to $\Vcal'=\{v'\}$, the flat connection
\[
[\nabla]_{\Vcal'}=d+\lambda\frac{dz}{z}
\]
on $\dot{V}$ with $[\Res_p(\nabla)]_{\Vcal'}=\lambda$
has monodromy $\langle T\rangle$ and $v=z^{-\lambda}v'$
is a flat holomorphic multi-section of $\dot{V}$.
%
%
Let $V\rar\Delta$ be the extension of $\dot{V}$ 
defined by requiring that $v'$ is a generator.
We can put on $V$ a $\nabla$-invariant metric $H$
by prescribing that $\|v\|_H=c>0$ is constant, namely
\[
\|v'\|_H:=c|z|^{\lambda}
\]
and so such an invariant norm has a zero of order $\lambda\in[0,1)$ at $0$.
We can then put on $E:=V$ the same holomorphic structure
as $V$ and take $e=v'$ as a holomorphic generator of $E$, and define the filtration $E_w$ by
\[
H^0(U,E_{w}) =
\left\{s\in H^0(U,E)\ \Big|\
\begin{array}{c}
\text{on every neighbourhood $U'\Subset U$ of $p$}\\
\text{$|z|^{\e-w}\|s\|_H$ is bounded for all $\e>0$}\end{array}
\right\}
\]
so that $H^0(U,E)=H^0(U,E_w)$ for every $w\in(-1+\lambda,\lambda]$
and the jumps occur at weights in $\lambda+\ZZ$.
\end{example}

The determinant $\det(E_\bullet)$ of a parabolic bundle $E_\bullet$ of rank $N$
is the parabolic line bundle defined in the standard way as a quotient
of $E_\bullet^{\otimes N}$ by the alternating action of $\mathfrak{S}_N$.
%
If $E_\bullet$ is of type $\bm{w}$,
then
\[
\det(E_\bullet)\cong\det(E)\otimes\left(\bigotimes_{i=1}^n \Ocal_S\big(\NO{w(p_i)}p_i\big)\right)
\]
In particular, if $\bm{w}$ is integral, then $\det(E_\bullet)$ has trivial parabolic structure.

%
%
%
%
%

\subsubsection{Rank $2$ of integral type}

Among all parabolic bundles of rank $2$ we focus on 
those of integral type because of their relation with $\SL_2$-bundles.


\begin{remark}
A parabolic structure $E_\bullet$ on a bundle $E$ of rank $2$ on $(S,P)$ is of integral type $\bm{w}$ (or, briefly, just {\it{integral}}) if 
and only if the following condition holds for every $p_i\in P$:
\begin{itemize}
\item[(1)]
either $b_i=1$ ({\it{degenerate case}}) and $w_1(p_i)\in\left\{0,\frac{1}{2}\right\}$;
\item[(2)]
or $b_i=2$ ({\it{non-degenerate case}}) and $\left(0,\frac{1}{2}\right)\ni w_1(p_i)<w_2(p_i)=1-w_1(p_i)\in \left(\frac{1}{2},1\right)$.
\end{itemize}
If $E_\bullet$ is integral of rank $2$, then
$\deg(E_\bullet)=\deg(E)+\#\{p_i\in P\,|\, w_1(p_i)>0\}$.
\end{remark}

We remark that, if $E_\bullet$ is non-degenerate at $p_i$ (i.e. $b_i=2$),
then giving $E_{p_i,w_2(p_i)}$ is equivalent to
giving a line $L_i\subset E|_{p_i}$. Indeed, knowing
$E_{p_i,w_2(p_i)}$, the line
$L_i$ can be recovered as the kernel of
$E|_{p_i}\rar \left(E_{p_i}/E_{p_i,w_2(p_i)}\right)$.
Vice versa, given $L_i$, the germ $E_{p_i,w_2(p_i)}$
is the kernel of $E_{p_i}\rar E|_{p_i}/L_i$.

\begin{example}[Type of parabolic line sub-bundle]\label{example:sub-bundle}
Let $E_\bullet$ be an integral parabolic bundle of rank $2$ and let $F\subset E$ be a sub-bundle of rank $1$.
Then the jump of the induced parabolic structure on $F$ at $p_i$ occurs at $w_F(p_i)$, where
\[
\begin{array}{|c|c|c|}
\hline
& \text{$E_\bullet$ degenerate at $p_i$} & \text{$E_\bullet$ non-degenerate at $p_i$}\\
\hline
w_F(p_i) & w_1(p_i) & 
\begin{array}{cl}
w_1(p_i) & \text{if $F|_{p_i}\neq L_i$}\\
1-w_1(p_i) & \text{if $F|_{p_i}=L_i$}
\end{array}
\\
\hline
\end{array}
\]
%
%
\end{example}

The following example illustrates
how parabolic structures on holomorphic vector bundles arise from unitary representations: since the argument is local, we
will only deal with the case of a bundle over a disk.
A complete treatment can be found in \cite{mehta-seshadri:parabolic}.

\begin{example}[Rank $2$ special unitary vector bundles on a punctured disk]\label{example:disk-rank2}
Keep the notation as in Example \ref{example:disk} and let $N=2$ and $T\in \SU_2$.
Up to conjugation, we can assume that $T$ is diagonal
and that $T(v_1)=\exp(-2\pi\sqrt{-1}\cdot \lambda)v_1$ and $T(v_2)=\exp(2\pi \sqrt{-1}\cdot \lambda)v_2$
with $\lambda\in[0,1)$, and so $T$ acts on $\CC\PP^1$ as a positive rotation of angle $4\pi\lambda$
that fixes $[1:0]$.
The connection $\nabla$ on $\dot{V}$ defined as
\[
[\nabla]_{\Vcal'}=d+\left(\begin{array}{cc}\lambda & 0\\ 0 & -\lambda\end{array}\right)\frac{dz}{z}
\quad
\text{with}
\ [\Res_p(\nabla)]_{\Vcal'}=\left(\begin{array}{cc}\lambda & 0\\ 0 & -\lambda\end{array}\right)
\] 
with respect to the basis $\Vcal'=\{v'_1=z^\lambda v_1,\ v'_2=z^{-\lambda}v_2\}$ has monodromy $\langle T\rangle$.
Put on $\dot{V}$ a $\nabla$-invariant metric $H$ by prescribing that
$v_1,v_2$ are orthogonal with $\|v_i\|_H=c_i>0$ and
define the univalent sections $e_1,e_2$ of $\dot{V}$
as
\[
e_1=v'_1,
\qquad
e_2=
\begin{cases}
v'_2 & \text{if $\lambda=0$}\\
z\cdot v'_2 & \text{if $\lambda>0$}
\end{cases}
\]
and extend $\dot{V}$ to a vector bundle $V\rar\Delta$ with generators $e_1,\ e_2$.
The holomorphic vector bundle $E:=V$ endowed with the same holomorphic structure as $V$
has a pointwise orthogonal
basis $\Ecal=\{e_1,e_2\}$ of holomorphic sections, that satisfy
\[
\|e_1\|_H=c_1|z|^{\lambda},
\qquad
\|e_2\|_H=
\begin{cases}
c_2 & \text{if $\lambda=0$}\\
c_2|z|^{1-\lambda} & \text{if $\lambda>0$}
\end{cases}
\]
If $\lambda=0$, then $\|e_i\|_H=c_i$ and 
we have a degenerate parabolic structure with
$w_1=0$. Similarly, if $\lambda=\frac{1}{2}$, then
$\|e_i\|_H=c_i |z|^{\frac{1}{2}}$ and $w_1=\frac{1}{2}$.

Assume now that $\lambda>0$ but $\lambda\neq\frac{1}{2}$.\\
If $\lambda\in \left(0,\frac{1}{2}\right)$, then we let $w_1=\lambda<w_2=1-\lambda$
and $L=\CC e_2\times\{p\}\subset E|_{p}$.
Since $|z|^\lambda>|z|^{1-\lambda}$, a section $s=f_1(z)e_1+f_2(z)e_2$ satisfies
\[
\ord_{p}\|s\|_H=
\ord_{p}\left(|f_1 z^\lambda|+|f_2 z^{1-\lambda}|\right)=
\begin{cases}
w_1=\lambda & \text{if $f_1(p)\neq 0$, i.e. if $s(p)\notin L$}\\
w_2=1-\lambda & \text{if $f_1(p)=0$, i.e. if $s(p)\in L$.}
\end{cases}
\]
If $\lambda\in \left(\frac{1}{2},1\right)$, then we let $w_1=1-\lambda<w_2=\lambda$
and $L=\CC e_1\times\{p\}\subset E_p$.
In both cases,
$E_{w}$ is defined by 
\[
H^0(U,E_{w}) =
\{s\in H^0(U,E)\,|\,\|s\|_H\cdot |z|^{\e-w}\ \text{bounded near $p$ for all $\e>0$}\}
\]
and in particular, $H^0(U,E_{w_2})=\{s\in H^0(U,E)\,|\,s(p)\in L\}$ and
\[
H^0(U,E_w)=
\begin{cases}
H^0(U,E) & \text{for $w\in[0,w_1]$}\\
H^0(U,E_{w_2}) & \text{for $w\in (w_1,w_2]$}\\
H^0(U,E(-p)) & \text{for $w\in(w_2,1)$}.
\end{cases}
\]
Observe that such parabolic structure is integral.
\end{example}

\subsection{Higgs bundles}

\begin{definition}
A {\it{parabolic Higgs bundle}} on $(S,P)$ is a couple 
$(E_\bullet,\Phi)$, where $E_\bullet$ is a holomorphic parabolic vector bundle
of rank $N$ and $\Phi\in H^0(S,K(P)\otimes\End(E_\bullet))$. The {\it{residue}}
of $\Phi$ at $p_i$ is the induced endomorphism $\Res_{p_i}(\Phi)\in \End(E_\bullet|_{p_i})$
of the filtered vector space $E_\bullet|_{p_i}$.
\end{definition}

Here is the motivating example in rank $N=1$ on the punctured disk.

\begin{example}[Line bundles on a punctured disk]\label{example:rank1-higgs}
Keep the notation as in Example \ref{example:disk} and let $N=1$ and $T=\exp[-2\pi \sqrt{-1}(\lambda+i\nu)]\in \GL_1(\CC)=\CC^*$ with
$\lambda\in [0,1)$ and $\nu\in\RR$. We can assume that $\nu\neq 0$, as
the case $\nu=0$ has already been discussed in Example \ref{example:u(1)}.
Such a monodromy $T$ is induced by
a flat connection $\nabla$ that can be written as
\[
[\nabla]_{\Vcal'}=d+(\lambda+i\nu)\frac{dz}{z}
\quad
\text{with}
\ [\Res_p(\nabla)]_{\Vcal'}=\lambda+i\nu
\]
with respect to $\Vcal'=\{v'=v'_1\}$.
The metric $H$ on $\dot{V}$ defined by
\[
\|v'\|_H:=c|z|^\lambda
\]
is harmonic with respect to $\nabla$, 
since $i\pa\ol{\pa}\log\|v'\|^2_H=0$.
Thus,
\begin{align*}
[\nabla]_{\Vcal'} =[\nabla^H]_{\Vcal'} +\Phi +\ol{\Phi}= \left(d+\lambda\frac{dz}{z}+i\frac{\nu}{2}\frac{dz}{z}+i\frac{\nu}{2}\frac{d\ol{z}}{\ol{z}}\right) +
\left(i\frac{\nu}{2}\frac{dz}{z}\right) +\left(-i\frac{\nu}{2}\frac{d\ol{z}}{\ol{z}}\right)
\end{align*}
where $\nabla^H$ is a connection on $\dot{V}$ compatible with
the metric $H$ and $\Res_p(\Phi)=i\nu/2$.
Extend $\dot{V}$ to the bundle $V=\CC v'\times\Delta\rar\Delta$ and put on the complex
line bundle $E:=V$ the holomorphic structure
given by $\ol{\pa}^E:=\ol{\pa}^V-\ol{\Phi}$, 
so that $\nabla^H$ is a Chern connection on $(E,\ol{\pa}^E,H)$.
Holomorphic sections of $E$ are generated then by
$e=\exp(-i\nu\log|z|)v'$.
Since $\|e\|_H=\|v'\|_H=c|z|^{\lambda}$, 
the filtration $E_\bullet$ is as in Example \ref{example:u(1)},  and
so that the jumps occur at $\lambda+\ZZ$.
\end{example}


The following computation is borrowed from \cite{simpson:harmonic}.

\begin{example}[Flat $\SL_2$-vector bundles on a punctured disk]\label{example:log}
Keep the notation as in Example \ref{example:disk} and let $N=2$
and $T\in \SL_2(\CC)$. If $T$ is diagonalizable, then the bundle $\dot{E}$ splits and we are reduced to the rank $1$ case
of Example \ref{example:rank1-higgs}. Thus, up to conjugation, we can assume that
\[
\left[T\right]_{\Vcal'}=\frac{1}{2}\left(
\begin{array}{cc}
1 & -1\\
0 & 1
\end{array}
\right),
\qquad
\left[\nabla\right]_{\Vcal'}
=d+
\frac{1}{2}\left(
\begin{array}{cc}
0 &1\\
0 & 0
\end{array}
\right)\frac{dz}{z}
\]
with respect to $\Vcal'=\{v'_1,v'_2\}$ and so
$[\Res_p(\nabla)]_{\Vcal'}=\frac{1}{2}\left(
\begin{array}{cc}
0 &1\\
0 & 0
\end{array}
\right)$.
Put on $\dot{V}$ the metric $H$ defined by
\[
[H]_{\Vcal'}=
\left(
\begin{array}{cc}
2|\log|z||^{-1} & -1\\
-1 & |\log|z||
\end{array}
\right)
\]
which is harmonic for $\nabla$ and
extend $\dot{V}$ to a complex bundle $V\rar S$ with basis $\Vcal'$.
Thus, $\nabla=\nabla^H+\Phi+\ol{\Phi}^H$, where 
\[
\left[\Phi\right]_{\Vcal'}=
\frac{1}{2}
\left(
\begin{array}{cc}
-|\log|z||^{-1} & -1\\
-|\log|z||^{-2} & |\log|z||^{-1}
\end{array}
\right)\frac{dz}{z},
\quad
\left[\ol{\Phi}^H\right]_{\Vcal'}=
\frac{1}{2}
\left(
\begin{array}{cc}
0 & 0\\
|\log|z||^{-2} & 0
\end{array}
\right)\frac{d\ol{z}}{\ol{z}}.
\]
and $\nabla^H$ is compatible with $H$.
Let now $\dot{E}:=\dot{V}$ as a complex vector bundle and notice that 
$\Ecal=\{e_1:=v'_1+\frac{v'_2}{|\log|z||},\ e_2:=v'_2\}$
is a set of generators for $\dot{E}$, which are
holomorphic with
respect to the operator $\ol{\pa}^E:=\ol{\pa}^V-\ol{\Phi}^H$.
Thus, we can extend $\dot{E}$ to a $E\rar\Delta$
by requiring that $e_1,e_2$ are generators.
A quick calculation gives
\[
[\Phi]_{\Ecal}=\frac{1}{2}\left(\begin{array}{cc}
0 & 1\\
0 & 0
\end{array}\right)\frac{dz}{z},
\quad
[\ol{\Phi}^H]_{\Ecal}=\frac{1}{2}\left(\begin{array}{cc}
0 & 0\\
|\log|z||^{-2} & 0
\end{array}\right)\frac{d\ol{z}}{\ol{z}}
\]
and so $\Phi\in H^0(\Delta,K(p)\otimes\End_0(\dot{E}))$
is a traceless Higgs field with nonzero nilpotent residue at $p$
and $\ol{\Phi}^H$ is its $H$-adjoint. 
From
\[
\frac{c}{|\!\log|z||^{\frac{1}{2}}}\leq \|e_j\|_H\leq c'|\!\log|z||^{\frac{1}{2}}
\]
it follows that $w_1=0$, the parabolic structure is integral degenerate and the jumps occur at $\ZZ$.

A similar computation shows that, for $T'=-T$, the norm
satisfies
\[
\frac{c|z|^{\frac{1}{2}}}{|\!\log|z||^{\frac{1}{2}}}\leq
\|e_j\|_H\leq c' |\!\log|z||^{\frac{1}{2}}|z|^{\frac{1}{2}}
\]
 and so
we would still obtain an integral degenerate parabolic structure with $w_1=\frac{1}{2}$ and
a traceless Higgs field with nonzero nilpotent residue, 
but the jumps would occur at $\frac{1}{2}+\ZZ$.
\end{example}

\medskip

A morphism of parabolic Higgs bundles $f:(E_\bullet,\Phi)\rar (E'_\bullet,\Phi')$
is a map $f:E_\bullet\rar E'_\bullet$ of parabolic bundle that makes the following
diagram
\[
\xymatrix{
E \ar[rr]^{\Phi} \ar[d]^f && E\otimes K(P)\ar[d]^{f\otimes 1}\\
E' \ar[rr] ^{\Phi'} && E'\otimes K(P)
}
\]
commutative. We will say that $f$ is injective (resp. properly injective)
if $f:E\rar E'$ is.
A parabolic Higgs sub-bundle of $(E_\bullet,\Phi)$ is a sub-bundle 
$F_\bullet\subseteq E_\bullet$ such that $\Phi(F)\subseteq F\otimes K(P)$;
the map $\Psi:(E/F)_\bullet\rar (E/F)_\bullet\otimes K(P)$ induced by $\Phi$
makes $\left((E/F)_\bullet,\Psi\right)$ into a parabolic Higgs quotient bundle.\\


\subsection{Stability}

The {\it{slope}} of the parabolic bundle $E_\bullet$ on $(S,P)$ is
defined as $\mu(E_\bullet):=\frac{\deg(E_\bullet)}{\rk(E)}$.

\begin{definition}
A parabolic Higgs bundle $(E_\bullet,\Phi)$ on $(S,P)$ is {\it{stable}}
(resp. {\it{semi-stable}}) if $\mu(F_\bullet)<\mu(E_\bullet)$
(resp. $\mu(F_\bullet)\leq \mu(E_\bullet)$) for every properly
injective $(F_\bullet,\Psi)\rar (E_\bullet,\Phi)$. A direct sum of stable
parabolic Higgs bundles with the same $\mu$ is said {\it{polystable}}.
\end{definition}

\begin{remark}\label{rmk:simple}
It is well-known that stable bundles are {\it{simple}}, i.e. their endomorphisms
are multiples of the identity, and so the group of their automorphism is 
$\CC^*$. The same argument works for parabolic Higgs bundles.
Indeed, if $f:(E_\bullet,\Phi)\rar (E_\bullet,\Phi)$ is a non-zero homomorphism, then 
$\mu(E_\bullet)\leq \mu(\IM(f))\leq \mu(E_\bullet)$ by semistability of $(E_\bullet,\Phi)$.
This forces $\IM(f)=E_\bullet$ because $(E_\bullet,\Phi)$ is stable.
Now pick a point $q\in S$ and let $\lambda\in\CC$ be an eigenvalue of $f_q:E|_q\rar E|_q$.
The endomorphism $(f-\lambda \cdot \id)\in \End(E_\bullet,\Phi)$ is not surjective and so it vanishes
by the above argument. It follows that $f=\lambda \cdot \id$.
\end{remark}

Semi-stable parabolic Higgs bundles have the Jordan-H\"older property:
if $(E_\bullet,\Phi)$ is semi-stable, then there exists a filtration
\[
\{0\}=E^0_\bullet\subsetneq E^1_\bullet\subsetneq E^2_\bullet\subsetneq\dots\subsetneq 
E_\bullet
\]
by parabolic sub-Higgs-bundles such that 
the Higgs bundle structure $\Gr^s(E_\bullet,\Phi)$
induced on the quotient $E^s_\bullet/E^{s-1}_\bullet$ is stable
and with slope $\mu(\Gr^s(E_\bullet))=\mu(E_\bullet)$.
It can be checked that, though the filtration is not canonical, the associated graded object
\[
\Gr(E_\bullet,\Phi)=\bigoplus_{s} \Gr^s(E_\bullet,\Phi)
\]
is. As for vector bundles, two parabolic Higgs bundles $(E_\bullet,\Phi)$, $(E'_\bullet,\Phi')$ are called {\it{$S$-equivalent}} if $\Gr(E_\bullet,\Phi)\cong \Gr(E'_\bullet,\Phi')$.
Thus, every semistable object is $S$-equivalent to a unique polystable one, up to isomorphism.

\subsection{Moduli spaces of parabolic $\SL_n$-Higgs bundles}

Fix a type $\bm{w}$, an $N$-uple $\bCoa$ of conjugacy classes in $\psl_N(\CC)$ and a base-point $b\in\dot{S}$.
Fix also a holomorphic parabolic line bundle $\Det_\bullet$ of type $\NO{\bm{w}}$.

\begin{definition}
A {\it{parabolic Higgs bundle of rank $N$ with determinant $\Det_\bullet$}} on $(S,P)$ 
{\it{of type $\bm{w}$}}
is a triple
$(E_\bullet,\eta,\Phi)$, where $E_\bullet$ is a holomorphic parabolic vector bundle
of rank $N$ and type $\bm{w}$, endowed with an isomorphism
$\eta:\det(E_\bullet)\arr{\sim}\Det_\bullet$
and a {\it{Higgs field}} $\Phi\in H^0(S,K(P)\otimes\End_0(E_\bullet))$.
An isomorphism $(E_\bullet,\eta,\Phi)\rar (E'_\bullet,\eta',\Phi')$
of parabolic Higgs bundles of rank $N$ with determinant $\Det_\bullet$
is a map $f:E_\bullet\rar E'_\bullet$ which is an isomorphism of parabolic Higgs bundles
and such that $\eta=\eta'\circ\det(f)$.
\end{definition}

By Remark \ref{rmk:simple}, 
an automorphism $f$ of $(E_\bullet,\eta,\Phi)$ must satisfy $\det(f_x)=1$ at all $x\in S$.
Hence, if $(E_\bullet,\Phi)$ is simple, then $\Aut(E_\bullet,\eta,\Phi)=\mu_N\cdot\id$,
where $\mu_N\subset\CC^*$ is the cyclic subgroup of $N$-th roots of unity.\\

Denote by $\Higgs^{ss}_b(S,N,\bm{w},\Det_\bullet,\bCoa)$ the set of isomorphism classes of
quadruples $(E_\bullet,\eta,\Phi,\tau)$ such that
\begin{itemize}
\item
$(E_\bullet,\eta,\Phi)$ is a semistable parabolic Higgs bundle on $(S,P)$ of rank $N$, type $\bm{w}$ and with determinant $\Det_\bullet$
\item
$\Res_{p_i}(\Phi)\in\coa_i$ for all $i=1,\dots,n$
\item
$\tau:E|_b\arr{\sim}\CC^N$ is a framing at $b$.
\end{itemize}
We denote by $\Higgs^{s}_b\subseteq\Higgs^{ps}_b\subseteq\Higgs^{ss}_b$ the stable and polystable loci.

The following two results are due to Simpson \cite{simpson:harmonic}, Konno \cite{konno:construction} and Yokogawa
\cite{yokogawa:compactification}.

\begin{theorem}[Moduli space of framed semi-stable parabolic Higgs bundles]
The space $\Higgs_b^{ss}(S,N,\bm{w},\Det_\bullet,\bCoa)$ is a normal quasi-projective variety
and a fine moduli space of $b$-framed semi-stable Higgs bundles
on $(S,P)$ of rank $N$, type $\bm{w}$ and with determinant $\Det_\bullet$.
Moreover, the stable locus $\Higgs_b^s(S,N,\bm{w},\Det_\bullet,\bCoa)$ is smooth.
\end{theorem}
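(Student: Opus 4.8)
The theorem asserts that $\Higgs_b^{ss}(S,N,\bm{w},\Det_\bullet,\bCoa)$ is a normal quasi-projective variety that is a fine moduli space of $b$-framed semistable parabolic Higgs bundles of the prescribed type, and that the stable locus is smooth. This is attributed to Simpson, Konno, and Yokogawa, so the proof should be understood as an orchestration of their construction (GIT for parabolic objects plus deformation theory), adapted to the fixed-determinant, prescribed-residue, framed setting.

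Let me think carefully about what has to be established and in what order.

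**Existence of the moduli space via GIT.** The first task is to construct the space as a quotient. The standard approach: fix the underlying smooth bundle and the parabolic flags' combinatorial type $\bm{w}$, and realize all semistable parabolic Higgs bundles with these invariants as points of a bounded family. Boundedness of semistable parabolic Higgs bundles of fixed rank, degree, and type is the crucial input — it guarantees that after twisting by a sufficiently ample line bundle, every such $E$ is globally generated with vanishing higher cohomology, so it embeds in a fixed Quot scheme. One then builds a parameter scheme $\mathcal{R}$ carrying the tautological data: a point of the Quot scheme (the bundle $E$), a choice of flags in the fibers $E|_{p_i}$ (a product of partial flag varieties, encoding the parabolic structure), the Higgs field $\Phi$ (a section of $K(P)\otimes\End_0(E)$, cut out linearly), the determinant isomorphism $\eta$ and the residue conditions $\Res_{p_i}(\Phi)\in\coa_i$ (both closed conditions), and the framing $\tau$. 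The group $\mathrm{GL}$ (or $\mathrm{SL}$, matching the fixed determinant) acts, and the moduli space is the GIT quotient of the semistable locus.

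**The role of the framing.** The framing $\tau:E|_b\xrightarrow{\sim}\CC^N$ is what makes this a \emph{fine} moduli space rather than merely a coarse one. Since stable parabolic Higgs bundles are simple (Remark \ref{rmk:simple}) with automorphism group $\mu_N\cdot\id$, the usual obstruction to a universal family is the residual action of these scalar automorphisms. The framing rigidifies the objects: fixing $\tau$ kills the scalar automorphisms (a nonidentity scalar cannot preserve the framing unless the determinant condition forces it into $\mu_N$, which is then handled by the quotient structure). This is the mechanism by which a tautological family over $\mathcal{R}$ descends to a genuine universal family over the quotient, so I would verify that the framed stabilizers are trivial and that the tautological family on the framed parameter space is $\mathrm{GL}$-equivariant and hence descends.

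**Linearizing stability = GIT stability.** The technical heart is showing that the intrinsic slope-stability condition (Definition of stability, comparing $\mu(F_\bullet)$ to $\mu(E_\bullet)$ over all sub-Higgs-bundles) coincides with GIT (semi)stability for a suitable linearization of the group action on the parameter scheme. This is the classical Mehta–Seshadri/Simpson computation: one applies the Hilbert–Mumford numerical criterion, translating one-parameter subgroups into weighted filtrations of $E$, and checks that the numerical GIT inequality reproduces the parabolic slope inequality — with the parabolic weights $\bm{w}$ entering the linearization through the choice of polarization on the flag factors. I expect \textbf{this linearization step to be the main obstacle}: getting the weights to match requires a careful, bookkeeping-heavy comparison, and the presence of the Higgs field $\Phi$ (so that only $\Phi$-invariant subbundles count) together with the parabolic flags and the fixed-determinant constraint makes the numerical computation genuinely delicate. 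The upshot is that the semistable locus of $\mathcal{R}$ maps onto exactly the semistable parabolic Higgs bundles, and GIT then produces a quasi-projective quotient.

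**Normality and smoothness of the stable locus.** Normality of $\Higgs^{ss}_b$ follows from the construction: the parameter scheme is smooth (a Quot scheme open locus times flag varieties times an affine space of Higgs fields, cut by the residue and determinant conditions which I would check impose the expected smooth constraints), and a good GIT quotient of a normal variety is normal. For smoothness of the stable locus, I would invoke deformation theory in the style of Proposition \ref{prop:smoothness} and Corollary \ref{cor:relative-smoothness}: the tangent space at $[(E_\bullet,\eta,\Phi)]$ is the first hypercohomology of the complex $\End_0(E_\bullet)\xrightarrow{[\Phi,-]}\End_0(E_\bullet)\otimes K(P)$ (the parabolic, trace-free deformation complex), and the obstruction lives in the second hypercohomology. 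Stability implies simplicity, which forces the zeroth hypercohomology to vanish and, by Serre duality on the parabolic deformation complex (the complex being self-dual up to the twist by $K$), forces the obstruction space to vanish as well. Hence the deformation functor is unobstructed at stable points and the moduli space is smooth there. Throughout, the $\SL_N$/fixed-determinant variant simply replaces $\End(E_\bullet)$ by $\End_0(E_\bullet)$ in every cohomological computation.

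**Summary of the plan.** In order: (1) prove boundedness of the relevant semistable objects and build a smooth parameter scheme $\mathcal{R}$ with the tautological Higgs bundle, flags, determinant isomorphism, residue conditions, and framing; (2) identify intrinsic slope-stability with GIT stability via Hilbert–Mumford, matching parabolic weights to the linearization (the hard step); (3) form the GIT quotient, inheriting quasi-projectivity and normality; (4) use the framing to trivialize automorphisms and descend the tautological family, obtaining fineness; and (5) run the parabolic deformation-complex computation to conclude smoothness of the stable locus. I would lean on the cited works of Simpson, Konno, and Yokogawa for the precise GIT bookkeeping and for Yokogawa's analysis of the compactification/normality, treating those as established inputs rather than reproving them in full.
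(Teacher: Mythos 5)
The paper gives no proof of this theorem: it is stated as a cited result of Simpson, Konno and Yokogawa, so there is nothing internal to compare against. Your outline (boundedness, a parameter scheme with flags, Higgs field, residue and determinant conditions, Hilbert--Mumford matching of parabolic slope-stability with GIT stability, rigidification by the framing to get fineness, and the self-dual parabolic deformation complex for smoothness of the stable locus) is a faithful summary of the construction in those references and is the standard route; it is the same approach the paper implicitly relies on.
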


The group $\GL_N(\CC)$ acts by post-composition on the $b$-framing, and so it acts on $\Higgs_b^{ss}(S,N,\bm{w},\Det_\bullet,\bCoa)$:
we denote by $\RRHiggs(S,N,\bm{w},\Det_\bullet;\bCoa)$ the set-theoretic quotient
and by $\Higgs(S,N,\bm{w},\Det_\bullet,\bCoa)$ its Hausdorffization.

\begin{theorem}[Moduli space of stable parabolic Higgs bundles]
The Hausdorff quotient $\Higgs(S,N,\bm{w},\Det_\bullet,\bCoa)$ is a normal quasi-projective variety, whose points are in bijection with $S$-equivalence classes of
semi-stable Higgs bundles on $(S,P)$ of rank $N$, type $\bm{w}$ and with determinant $\Det_\bullet$.
The open locus 
$\Higgs^s(S,N,\bm{w},\Det_\bullet,\bCoa)$
is an orbifold and a fine moduli space of stable objects. 
\end{theorem}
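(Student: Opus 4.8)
The plan is to realise $\Higgs(S,N,\bm{w},\Det_\bullet,\bCoa)$ as a Geometric Invariant Theory quotient of the framed moduli space $\Higgs_b^{ss}:=\Higgs_b^{ss}(S,N,\bm{w},\Det_\bullet,\bCoa)$ by the reductive group $\GL_N(\CC)$, which acts algebraically by post-composition on the $b$-framing $\tau$. By the previous theorem $\Higgs_b^{ss}$ is a normal quasi-projective variety, so the first step is to fix a $\GL_N(\CC)$-linearized ample line bundle on it; such a linearization is available from the construction of $\Higgs_b^{ss}$ as a subquotient of a Quot scheme of parabolic sheaves (Yokogawa), which embeds it $\GL_N(\CC)$-equivariantly into a projective space. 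With this datum, Mumford's theory produces a good quotient $\pi\colon\Higgs_b^{ss}\rar \Higgs_b^{ss}/\!\!/\GL_N(\CC)$ onto a quasi-projective variety, which is normal because the ring of invariants of a normal ring under a reductive group is again normal.

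The crucial point is the identification between GIT-(semi)stability of a framed point and slope-(semi)stability of the underlying parabolic Higgs bundle. I would establish it through the Hilbert--Mumford numerical criterion: one-parameter subgroups of $\GL_N(\CC)$ acting on the framing produce, after passing to the limit, weighted filtrations of $E_\bullet$ by parabolic sub-Higgs-bundles, and the associated Mumford weight computes, up to a positive multiple, the quantity $\mu(F_\bullet)-\mu(E_\bullet)$ tested against such sub-objects. Matching signs then shows that a framed point is GIT-stable (resp. semistable) precisely when $(E_\bullet,\eta,\Phi)$ is stable (resp. semistable) in the slope sense of the earlier definition. I expect this numerical matching to be the main obstacle, since it requires careful bookkeeping of the parabolic weights together with the residue constraint $\Res_{p_i}(\Phi)\in\coa_i$ inside the Hilbert--Mumford weight; this is exactly the technical core of the constructions of Simpson and Yokogawa.

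Granting the identification, the structure of the quotient follows from standard GIT arguments. The closed $\GL_N(\CC)$-orbits in the semistable locus are precisely those of polystable framed objects, and by the Jordan--H\"older property recalled above every semistable orbit closure contains a unique closed orbit, namely that of its polystabilization $\Gr(E_\bullet,\Phi)$. Hence two semistable points have the same image under $\pi$ if and only if the underlying Higgs bundles are $S$-equivalent, so the points of $\Higgs_b^{ss}/\!\!/\GL_N(\CC)$ are in bijection with $S$-equivalence classes of semistable parabolic Higgs bundles of the prescribed type (the framing being forgotten along the orbit). Since a good quotient is a categorical quotient whose fibres are exactly the orbit-closure equivalence classes, and such a quotient is the universal separated, hence Hausdorff, quotient, its underlying space agrees with the Hausdorffization $\Higgs$ of $\RRHiggs=\Higgs_b^{ss}/\GL_N(\CC)$; this gives the first assertion.

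Finally I would treat the stable locus. By Remark \ref{rmk:simple} the automorphism group of a stable framed object is $\Aut(E_\bullet,\eta,\Phi)=\mu_N\cdot\id$; in particular the scalar subgroup $\mu_N\subset\GL_N(\CC)$ acts trivially on all of $\Higgs_b^{ss}$, and the induced action of $\GL_N(\CC)/\mu_N$ on the stable locus is free with closed orbits. Thus $\pi$ restricts to a geometric quotient, and, using the smoothness of $\Higgs_b^s$ from the previous theorem, the coarse quotient is smooth; recording the residual $\mu_N$ of automorphisms at every point turns it into a smooth orbifold (a $\mu_N$-gerbe). The $\GL_N(\CC)$-equivariant universal framed family over $\Higgs_b^s\times S$ then descends to a universal family over this orbifold, exhibiting $\Higgs^s(S,N,\bm{w},\Det_\bullet,\bCoa)$ as a fine moduli space of stable objects.
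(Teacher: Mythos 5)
The paper does not actually prove this statement: it is quoted as a background result and attributed wholesale to Simpson, Konno and Yokogawa, so there is no internal argument to compare yours against. Your outline is a faithful reconstruction of the standard route taken in those references (GIT quotient of the framed moduli space by $\GL_N(\CC)$, Hilbert--Mumford matching of GIT-stability with parabolic slope stability, closed orbits $=$ polystable objects, $S$-equivalence via Jordan--H\"older, geometric quotient on the stable locus), and the global architecture is sound. You are also right, and candid, that the numerical matching in the Hilbert--Mumford criterion --- with the parabolic weights and the residue constraint folded in --- is where all the real work lives; in a self-contained proof that step cannot be waved at, and as written your proposal defers it entirely to the literature, which makes it a citation in disguise rather than a proof.

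There is one genuine gap beyond that acknowledged deferral: the fineness of $\Higgs^s(S,N,\bm{w},\Det_\bullet,\bCoa)$. Freeness of the $\GL_N(\CC)/\mu_N$-action gives you a smooth geometric quotient, but it does not give descent of the universal framed family. The residual $\mu_N$ in the stabilizer of every stable point acts \emph{nontrivially} on the fibres of the universal bundle (by scalars), even though it acts trivially on isomorphism classes; so the family does not descend to the coarse space ``for free''. One must either twist the universal bundle by an equivariant line bundle on the base carrying the inverse $\mu_N$-character (and check such a line bundle exists, which is exactly where genericity or coprimality hypotheses enter in Yokogawa's and Boden--Yokogawa's work), or interpret ``fine moduli space'' at the level of the quotient stack. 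Your sentence ``the universal framed family then descends'' asserts precisely the thing that needs proving, and it is the one place where your argument, as opposed to your bookkeeping, could actually fail.
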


Let $F_\bullet$ be a line bundle on $S$ with parabolic structure of type $\bm{w_F}$ (which is trivial at $b$) 
and fix a trivialization of $F_\bullet$ at the basepoint $b\in\dot{S}$.
Then $E_\bullet\mapsto F_\bullet\otimes E_\bullet$ induces a $\GL_N(\CC)$-equivariant isomorphism
\[
\Higgs^{ss}_b(S,N,\bm{w},\Det_\bullet,\bCoa)\arr{\sim}
\Higgs^{ss}_b(S,N,\bm{w_F}\otimes\bm{w},F_\bullet^{\otimes N}\otimes\Det_\bullet,\bCoa)
\]
that preserves polystable and stable locus. Thus, it induces an isomorphism
\[
\Higgs(S,N,\bm{w},\Det_\bullet,\bCoa)\cong
\Higgs(S,N,\bm{w_F}\otimes \bm{w},F_\bullet^{\otimes N}\otimes\Det_\bullet,\bCoa)
\]
that preserves the stable locus.

\begin{remark}
Let $L_\bullet=L(\sum_{i=1}^n w(p_i)p_i)$ 
be a parabolic line bundle on $(S,P)$ and let $N\geq 2$.
Chosen $p_0\in S$,
there exist an integer $r$ and
a line bundle $Q$ such that $L\cong Q^{\otimes N}\otimes \Ocal_S(r\cdot p_0)$ with $0\leq r<N-1$.
Thus, $L_\bullet\cong Q(\frac{r}{N}p_0+\sum_{i=1}^n\frac{w(p_i)}{N}p_i)^{\otimes N}$.
Hence, if $n>0$, then we can choose $p_0=p_1$ for instance, and so
$L_\bullet$ admits an $N$-root which is a line bundle on $S$ with parabolic structure at $P$.
If $n=0$, then $L_\bullet$ admits an $N$-th root which is a line bundle (possibly) with parabolic structure
at $p_0$.
\end{remark}

By the above remark, we can choose an $N$-th root
$F_\bullet$ of $\Det^\vee_\bullet$ and so
we have established an isomorphism
$\Higgs^s(S,N,\bm{w},\Det_\bullet,\bCoa)\rar\Higgs^s(S,N,\bm{w_F}\otimes \bm{w},\Ocal_S,\bCoa)$.

For rank $2$ integral parabolic structures the remark specializes to the following.

\begin{corollary}[Odd and even rank $2$ integral parabolic structures]\label{cor:det}
Let $N=2$ and fix an integral parabolic type $\bm{w}$ and a line bundle $\Det$ with trivial parabolic structure.
Also fix an auxiliary point $p_0\in\dot{S}$ different from $b$ and let $\bm{w_0}$ be the parabolic type of $\Ocal_S(\frac{1}{2}p_0)$.\\
Then the moduli space $\Higgs^{s}(S,2,\bm{w},\Det,\bCoa)$ is isomorphic to either of the following:
\begin{itemize}
\item[(1)]
$\Higgs^{s}(S,2,\bm{w}_0\otimes\bm{w},\Ocal_S,\bCoa)$, if $\deg(\Det)$ is odd;
\item[(2)]
$\Higgs^{s}(S,2,\bm{w},\Ocal_S,\bCoa)$, if $\deg(\Det)$ is even.
\end{itemize}
\end{corollary}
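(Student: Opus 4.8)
The plan is to realise the desired isomorphism as tensoring by a suitable parabolic line bundle, exactly as in the Remark preceding the statement, and then to read off the resulting parabolic type from the parity of $\deg(\Det)$. Since $\Det$ carries the trivial parabolic structure, so does $\Det^\vee$, and the Remark furnishes a parabolic square root: there exist a line bundle $Q$ on $S$ and an integer $r\in\{0,1\}$ with $\Det^\vee\cong Q^{\otimes 2}\otimes\Ocal_S(r\cdot p_0)$, so that $F_\bullet:=Q\big(\tfrac{r}{2}p_0\big)$ satisfies $F_\bullet^{\otimes 2}\cong\Det^\vee$ as parabolic line bundles. Comparing degrees gives $r\equiv\deg(\Det^\vee)\equiv\deg(\Det)\pmod 2$.

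Next I would tensor by $F_\bullet$. Viewing the original objects as carrying the trivial parabolic structure also at the extra point $p_0$ (legitimate since $p_0\in\dot{S}$), and using $p_0\neq b$ to trivialise $F_\bullet$ at the basepoint, the map $E_\bullet\mapsto F_\bullet\otimes E_\bullet$ from the cited displayed isomorphism carries $\Higgs^s(S,2,\bm{w},\Det,\bCoa)$ isomorphically onto $\Higgs^s(S,2,\bm{w_F}\otimes\bm{w},F_\bullet^{\otimes 2}\otimes\Det,\bCoa)$ and preserves the stable locus; note that tensoring by a line bundle leaves $\Phi$ and hence the residue data $\bCoa$ untouched, and creates no pole of $\Phi$ at $p_0$. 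By construction the new determinant is $F_\bullet^{\otimes 2}\otimes\Det\cong\Det^\vee\otimes\Det\cong\Ocal_S$ with trivial parabolic structure, so everything reduces to identifying $\bm{w_F}$.

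Finally I would split according to parity. If $\deg(\Det)$ is even then $r=0$, so $F_\bullet=Q$ has trivial parabolic type, $\bm{w_F}\otimes\bm{w}=\bm{w}$, and we land in case (2). If $\deg(\Det)$ is odd then $r=1$, so $F_\bullet=Q\big(\tfrac12 p_0\big)$ has a single jump of weight $\tfrac12$ at $p_0$, that is, precisely the type $\bm{w}_0$ of $\Ocal_S(\tfrac12 p_0)$; hence $\bm{w_F}\otimes\bm{w}=\bm{w}_0\otimes\bm{w}$, giving case (1). The only substantive input is the existence of the square root $Q$, which rests on the divisibility of $\Pic^0(S)$ --- every line bundle of even degree on $S$ is a square --- and I expect the main point requiring care to be the weight bookkeeping: in the odd case the half-integer weight placed at $p_0$ is exactly what absorbs the odd degree of $\Det$ while keeping $\Phi$ regular there.
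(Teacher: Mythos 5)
Your proposal is correct and follows exactly the route the paper intends: the corollary is stated as a direct specialization to $N=2$ of the preceding tensoring isomorphism $E_\bullet\mapsto F_\bullet\otimes E_\bullet$ together with the remark producing a parabolic square root $F_\bullet=Q\big(\tfrac{r}{2}p_0\big)$ of $\Det^\vee$, with $r\equiv\deg(\Det)\pmod 2$ deciding whether $\bm{w_F}$ is trivial or equal to $\bm{w}_0$. Your write-up supplies the same bookkeeping the paper leaves implicit, so there is nothing to correct.
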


\subsection{Involution and fixed locus}

We now restrict to the case of rank $N=2$,
and we remark that
every element $X\in\psl_2(\CC)$ is in the same $\Ad_{\SL_2(\CC)}$-orbit as $-X$.


Fix integral $\bm{w}$,
an $n$-uple of conjugacy classes $\bCoa$ in $\psl_2(\CC)$ and
a line bundle $\Det$ with trivial parabolic structure and let
$d_0=\deg(\Det)$.

Following Hitchin, consider the involution
$\sigma_b:\Higgs^{ss}_b(S,2,\bm{w},\Det,\bCoa)\rar\Higgs^{ss}_b(S,2,\bm{w},\Det,\bCoa)$
defined as $\sigma(E_\bullet,\eta,\Phi,\tau):=(E_\bullet,\eta,-\Phi,\tau)$,
and let $\sigma$ be the induced map on $\Higgs(S,2,\bm{w},\Det,\bCoa)$.

\begin{lemma}[$\sigma$-fixed locus]\label{lemma:real}
Let $(E_\bullet,\eta,\Phi)$ be a polystable parabolic Higgs bundle of rank $2$, type $\bm{w}$, determinant $\Det$ and with residues at $p_i$ in $\coa_i$.
The point $[E_\bullet,\eta,\Phi]$ is fixed under $\sigma$ if and only if 
there exists an isomorphism $\iota:(E_\bullet,\eta,\Phi)\arr{\sim} (E_\bullet,\eta,-\Phi)$, which happens
if and only if one of the following conditions is satisfied:
\begin{itemize}
\item[(a)]
$\Phi=0$ and $E_\bullet$ is polystable;
\item[(b)]
$E_\bullet\cong (L^\vee_\bullet\otimes\Det)\oplus L_\bullet$ 
with $\deg(\Det)\leq 2\deg(L_\bullet)$
and
\[
0\neq\Phi=\left(\begin{array}{cc}
0 & \phi\\
\psi & 0
\end{array}\right),
\qquad
\iota=\pm\left(\begin{array}{cc}
i & 0\\
0 & -i
\end{array}\right)
\]
with respect to this decomposition, where
\begin{align*}
0\neq & \phi \in \Hom(L_\bullet,L^\vee_\bullet\otimes\Det)\otimes K(P)\\
&\psi \in \Hom(L^\vee_\bullet\otimes\Det,L_\bullet)\otimes K(P).
\end{align*}
If $\deg(\Det)<2\deg(L_\bullet)$, then $(E_\bullet,\Phi)$ is necessarily stable.
If $\deg(\Det)=2\deg(L_\bullet)$, then 
$(E_\bullet,\Phi)$ is polystable if and only if $\psi\neq 0$ too.
\end{itemize}
Furthermore, if $(E_\bullet,\eta,\Phi)$ is stable with $\Phi\neq 0$ and $[E_\bullet,\eta,\Phi]$ is fixed by $\sigma$, then
\begin{itemize}
\item[(b1)]
the isomorphism $\iota$ is unique up to $\{\pm 1\}$;
\item[(b2)]
if $\Det\not\cong L^{\otimes 2}_\bullet$, then
the decomposition $E_\bullet\cong (L^\vee_\bullet\otimes\Det)\oplus L_\bullet$
is unique;
\item[(b3)]
if $\Det\cong L^{\otimes 2}_\bullet$ and so $E_\bullet\cong L_\bullet\oplus L_\bullet$,
then $\phi,\psi$ are not proportional.
\end{itemize}
Finally, if $(E_\bullet,\eta,\Phi)$ is strictly polystable with $\Phi\neq 0$
and $[E_\bullet,\eta,\Phi]$ is fixed by $\sigma$, then 
$(E_\bullet,\Phi)\cong \left(L_\bullet\oplus L_\bullet,\left(\begin{array}{cc}
\phi & 0\\
0 & -\phi
\end{array}\right)\right)$ with $0\neq \phi\in H^0(S,K(P))$ and
$\iota=\pm\left(\begin{array}{cc}
0 & -1\\
1 & 0
\end{array}\right)$.
\end{lemma}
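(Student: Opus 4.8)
The plan is to first translate $\sigma$-invariance of the point $[E_\bullet,\eta,\Phi]$ into the existence of the isomorphism $\iota$, and then to read off the normal forms from the algebraic constraints it satisfies. Since the points of $\Higgs(S,2,\bm{w},\Det,\bCoa)$ are isomorphism classes of polystable objects and $\sigma$ is induced by $\Phi\mapsto-\Phi$, the class is $\sigma$-fixed exactly when the polystable triples $(E_\bullet,\eta,\Phi)$ and $(E_\bullet,\eta,-\Phi)$ are isomorphic, which is precisely the asserted $\iota$. The compatibility $\eta=\eta\circ\det(\iota)$ forces $\det(\iota)=1$, so $\iota$ is a parabolic automorphism of $E_\bullet$ of determinant $1$ with $\iota\Phi\iota^{-1}=-\Phi$; in particular its eigen-subbundles are parabolic subbundles. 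When $\Phi=0$ any such $\iota$ exists and the sole constraint is that $(E_\bullet,0)$ be polystable, i.e.\ that $E_\bullet$ be polystable, giving case (a). From now on I assume $\Phi\neq0$.

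Suppose first $(E_\bullet,\eta,\Phi)$ is stable. Then $\iota^2$ commutes with $\Phi$, hence is an automorphism of the Higgs bundle $(E_\bullet,\Phi)$, which is simple by Remark \ref{rmk:simple}; thus $\iota^2=\lambda\cdot\id$ and $\det(\iota)=1$ forces $\lambda=\pm1$. If $\lambda=+1$ then $\iota$ would be an involution of determinant $1$, hence $\iota=\pm\id$, incompatible with $\iota\Phi\iota^{-1}=-\Phi\neq\Phi$; therefore $\iota^2=-\id$. Diagonalising $\iota$ fibrewise splits $E_\bullet$ as a sum of its $(\pm i)$-eigen-subbundles $E_\bullet\cong V_{+,\bullet}\oplus V_{-,\bullet}$, with induced parabolic structures computed as in Example \ref{example:sub-bundle}; since $V_+\otimes V_-=\det E\cong\Det$, I may take a summand of maximal parabolic degree as $L_\bullet$ and the other as $L^\vee_\bullet\otimes\Det$, which already yields $\deg(\Det)\le2\deg(L_\bullet)$. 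Imposing $\iota\Phi=-\Phi\iota$ with $\iota=\pm\,\mathrm{diag}(i,-i)$ kills the diagonal blocks of $\Phi$ and leaves the off-diagonal shape with $\phi\in\Hom(L_\bullet,L^\vee_\bullet\otimes\Det)\otimes K(P)$ and $\psi\in\Hom(L^\vee_\bullet\otimes\Det,L_\bullet)\otimes K(P)$ (conversely, for $\Phi$ of this shape $\iota=\pm\,\mathrm{diag}(i,-i)$ visibly conjugates $\Phi$ to $-\Phi$); stability forces $\phi\neq0$, since otherwise $L_\bullet$ would be a $\Phi$-invariant subbundle of slope $\ge\mu(E_\bullet)$.

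The stable/polystable dichotomy then follows from a direct study of $\Phi$-invariant sub-line-bundles $M$. Restricting $\Phi$ to such an $M$ defines an eigenvalue $\lambda\in H^0(K(P))$ with $\lambda^2=-\det(\Phi)=\phi\psi$, and expressing the inclusion of $M$ into $L_\bullet\oplus(L^\vee_\bullet\otimes\Det)$ by its two components shows, using the impossibility of nonzero maps between line bundles of the wrong degree, that any invariant $M$ has $\deg(M)\le\deg(L^\vee_\bullet\otimes\Det)$. When $\deg(\Det)<2\deg(L_\bullet)$ this bound is $<\mu(E_\bullet)$, so $(E_\bullet,\Phi)$ is automatically stable; when $\deg(\Det)=2\deg(L_\bullet)$ the subbundle $L^\vee_\bullet\otimes\Det$ has slope $\mu(E_\bullet)$ and is $\Phi$-invariant precisely when $\psi=0$, so polystability holds iff $\psi\neq0$ as well. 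The refinements are now short: (b1) holds because two admissible $\iota$ differ by an automorphism of $(E_\bullet,\Phi)$, hence by $\pm\id$; (b2) holds because the eigen-decomposition is intrinsic to $\iota$ (hence to $\pm\iota$) and the two summands are non-isomorphic once $\Det\not\cong L^{\otimes2}_\bullet$; and (b3), in the case $E_\bullet\cong L_\bullet\oplus L_\bullet$ where $\phi,\psi\in H^0(K(P))$, holds because proportional $\phi,\psi$ would make $\phi\psi$ a square and produce a $\Phi$-invariant line of slope $\mu(E_\bullet)$, contradicting stability. The delicate point here, and the step I expect to require the most care, is precisely this bookkeeping of $\Phi$-invariant sub-line-bundles at the boundary case $\deg(\Det)=2\deg(L_\bullet)$, tracking the \emph{parabolic} weights induced on $M$ rather than ordinary degrees.

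Finally, for $(E_\bullet,\eta,\Phi)$ strictly polystable with $\Phi\neq0$ I would use the Jordan--H\"older decomposition to write it as a sum $(M_\bullet,\phi_1)\oplus(M'_\bullet,\phi_2)$ of two stable Higgs line bundles of slope $\mu(E_\bullet)$, with $\phi_1,\phi_2\in H^0(S,K(P))$. Because the associated graded is intrinsic, $\sigma$-invariance means the unordered pair $\{(M_\bullet,\phi_1),(M'_\bullet,\phi_2)\}$ equals $\{(M_\bullet,-\phi_1),(M'_\bullet,-\phi_2)\}$. Fixing the two summands individually would force $\phi_1=-\phi_1$ and $\phi_2=-\phi_2$, i.e.\ $\Phi=0$; hence the two summands must be interchanged, giving $M_\bullet\cong M'_\bullet=:L_\bullet$ and $\phi_2=-\phi_1=:-\phi$ with $0\neq\phi\in H^0(S,K(P))$, and $\Det\cong L^{\otimes2}_\bullet$. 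A direct check shows that the swap $\iota=\pm\left(\begin{array}{cc}0&-1\\1&0\end{array}\right)$ has determinant $1$ and conjugates $\Phi=\left(\begin{array}{cc}\phi&0\\0&-\phi\end{array}\right)$ to $-\Phi$, which is the stated normal form.
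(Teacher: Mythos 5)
Your proof is correct and follows essentially the same route as the paper's (which itself defers to Hitchin, Sec.~10): decompose $E_\bullet$ into the $\pm i$-eigenbundles of $\iota$, bound the degree of $\Phi$-invariant line subbundles to settle the stability dichotomy, and use simplicity for the uniqueness claims (b1)--(b3). The only substantive difference is organizational: you derive $\iota^2=-\id$ globally from simplicity of the stable Higgs bundle and handle the strictly polystable case separately via the canonical decomposition into stable summands, whereas the paper argues pointwise that $\iota$ must have eigenvalues $\pm i$ wherever $\Phi\neq 0$, which treats the stable and strictly polystable cases uniformly.
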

\begin{proof}
The argument is essentially the same as in \cite{hitchin:self-duality}, Sec.~10.\\
In case (a), the Higgs bundle $(E_\bullet,0)$ is polystable. So we assume $\Phi\neq 0$
and we want to show that (b) holds.
%

Since $\iota\circ \Phi\circ \iota^{-1}=-\Phi$, it is easy to check that $\Phi$ must vanish at all points $Q$ where $\iota$ is a unipotent automorphism. Moreover, a quick computation shows that $\iota$ must have eigenvalues $\pm i$
at $\dot{S}\setminus Q$. Since $\Phi\neq 0$ except at a finite number of points, $\iota$ has everywhere eigenvalues $\pm i$ with eigenbundles $L_\bullet$ and $L^\vee_\bullet\otimes\Det$, and 
so $\iota=\pm \left(\begin{array}{cc}
i & 0\\
0 & -i
\end{array}\right)$ and
$\Phi=\left(\begin{array}{cc}
0 & \phi\\
\psi & 0
\end{array}\right)$.\\
%
Suppose first $\deg(\Det)<2\deg(L_\bullet)$.\\
Then $L_\bullet$ cannot be preserved by $\Phi$ by semi-stability,
and so $\phi\neq 0$.
Moreover, $\deg(\Det)/2>\deg(L^\vee_\bullet\otimes\Det)$ implies
a line sub-bundle $L'_\bullet\subset E_\bullet$ with $\deg(L'_\bullet)>\mu(E_\bullet)=\deg(\Det)/2$
must necessarily be $L'_\bullet=L_\bullet$. It follows that
the projection $L'_\bullet\rar L^\vee_\bullet\otimes\Det$ necessarily vanishes and so
$(E_\bullet,\Phi)$ is stable.

Suppose now $\deg(\Det)=2\deg(L_\bullet)$.\\
Then $\phi=0$ would imply $\psi=0$ by polystability (and vice versa): hence, we must have $\phi,\psi\neq 0$.

About the second part of the statement, given $\iota,\iota':(E_\bullet,\eta,\Phi)\rar (E_\bullet,\eta,-\Phi)$ isomorphisms of stable Higgs bundles with determinant $\Det$,
the composition $(\iota^{-1}\circ \iota')\in\Aut(E_\bullet,\eta,\Phi)$ and so $\iota^{-1}\circ \iota'=\pm\id$ because
$(E_\bullet,\eta,\Phi)$ is simple. Property (b2) easily follows from (b1) and (b).
As for (b3), a destabilizing sub-bundle (necessarily isomorphic to $L_\bullet$)
exists if and only if $0\neq \phi,\psi\in H^0(S,K(P))$ are proportional.

Concerning the final claim, 
it is enough to observe that
$\phi$ vanishes only on finitely many points of $S$ and
the involution $\iota$ must exchange the two eigenspaces of $\Phi/\phi$ away from those finitely many points.
\end{proof}


Denote by $\Higgs^{ps}_b(S,2,\bm{w},\Det,\bCoa)(\RR)$ the set of
$(E_\bullet,\eta,\Phi,\tau,\{\pm\iota\})$ such that
$(E_\bullet,\eta,\Phi,\tau)$ is a polystable parabolic Higgs bundle of rank $2$
of type $\bm{w}$ with determinant $\Det$ and residues in $\bCoa$,
and $\iota:(E_\bullet,\eta,\Phi)\rar (E_\bullet,\eta,-\Phi)$ is an isomorphism.
We denote by $\Higgs^s_b(S,2,\bm{w},\Det,\bCoa)(\RR)$ the locus of stable
objects and by $\Higgs^s(S,2,\bm{w},\Det,\bCoa)(\RR)$ its quotient by $\SL_2(\CC)$.

By the above lemma, $\Higgs^s(S,2,\bm{w},\Det,\bCoa)(\RR)$ can be identified to
the locus in $\Higgs^s(S,2,\bm{w},\Det,\bCoa)$ fixed by $\sigma$, and so we will
denote a point just by $(E_\bullet,\eta,\Phi)$.

If $\ol{\bCoa}\supseteq \bm{\mathfrak{0}}=\{0\}^n$, then $\Higgs^s(S,2,\bm{w},\Det,\ol{\bCoa})(\RR)$ contains
the locus of $\Phi=0$, namely
\[
\Bun^s(S,2,\bm{w},\Det) :=
\big\{\text{$(E_\bullet,\eta)$ stable parabolic rank $2$ bundle of type $\bm{w}$
with $\eta:\det(E_\bullet)\arr{\sim}\Det$}\big\}
\]

\begin{definition}
We say that the couple $(\bm{w},\bCoa)$ is {\it{compatible (with the $\sigma$-involution)}}
if
\begin{itemize}
\item
$0<w_1(p_i)<1/2\ \implies\ \bCoa_i=\{0\}$
\item
$w_1(p_i)=0;\ 1/2\ \implies\ \det(\bCoa_i)\geq 0$
\item
$s_0+\chi(\dot{S})<0$.
\end{itemize}
\end{definition}

\begin{notation}
Given $\bm{w}$, denote by $J\even_{deg}$ (resp. $J\odd_{deg}$)
the set of indices $j\in\{1,2,\dots,n\}$
for which $w_1(p_j)=0$ (resp. $w_1(p_j)=\frac{1}{2}$)
and let $J_{deg}=J\even_{deg}\cup J\odd_{deg}$.\\
Given $\bCoa$, we define
$J_{0} =\{j\in J_{deg}\,|\,\bcoa_j=\{0\}\}$, $J_{nil} =\{j\in J_{deg}\,|\,\text{$\bcoa_j$ nilpotent}\}$
and $J_{inv} =\{j\in J_{deg}\,|\,\det(\bcoa_j)\neq 0\}$.\\
Finally, call $s=\# J_{deg}$, $s_{0}=\# J_0$ and $s_{inv}=\# J_{inv}$,
and also $s\even=\# J\even_{deg}$ and $s\odd=\# J\odd_{deg}$.
\end{notation}


\begin{definition}
Given $\Det$ and compatible $(\bm{w},\bCoa)$, we say that
the couple $(d,\bm{a})\in\ZZ\times\{0,1\}^n$
is {\it{admissible}} if
\begin{itemize}
\item[(a)]
$a_j=0$ for all $j\in J_{deg}$;
\item[(b)]
$e(d,\bm{a},\bm{w}):=2d-d_0+2\sum_{i=1}^n \left(a_i+(-1)^{a_i}w_1(p_i)\right)\geq 0$
\item[(c)]
$2d\leq d_0-\chi(S)-\NO{\bm{a}}+s\even-s_0$.
\end{itemize}
\end{definition}

For every admissible $(d,\bm{a})$,
we define the following locus in $\Higgs^s(S,2,\bm{w},\Det,\bCoa)(\RR)$
\[
\Higgs^s(S,2,\bm{w},\Det,\bCoa)(\RR)_{d,\bm{a}} : =
\left\{
\begin{array}{l}
\text{$E_\bullet\cong(L^\vee_\bullet\otimes\Det)\oplus L_\bullet$
with $\Phi=\left(\begin{array}{cc} 0 & \phi\\ \psi & 0\end{array}\right)$
and $\Res_{p_i}(\Phi)\in\bcoa_i$}\\
0\neq \phi\in H^0(S,\Det K L_\bullet^{-2}(P)), \quad
\psi\in H^0(S,\Det^\vee K L_\bullet^2(P))\\
\deg(L)=d,
\quad w_L(p_i)=\begin{cases} w_1(p_i) & \text{if $a_i=0$}\\ 1-w_1(p_i) &\text{if $a_i=1$}\end{cases}
\end{array}
\right\}
\]
Thus, $e(d,\bm{a},\bm{w})$ can be rewritten as $e(d,\bm{a},\bm{w})=2d-d_0+2\sum_{i=1}^n w_L(p_i)$
and the admissibility constraints (b-c) read
\[
-2\sum_{i=1}^n w_L(p_i)\leq 2d-d_0\leq -\chi(S)-\NO{\bm{a}}+s\even-s_0.
\]
Moreover, the condition $e(d,\bm{a},\bm{w})\geq 0$ is equivalent to $\deg(L_\bullet)\geq \deg(\Det\otimes L_\bullet^\vee)$;
thus, in view of Lemma \ref{lemma:real},
it is understood that we also require $\psi\neq 0$, if $e(d,\bm{a},\bm{w})=0$.

We can rephrase our analysis as follows.

\begin{proposition}[Partition of the $\sigma$-fixed locus]\label{prop:real}
The space $\Higgs^s(S,2,\bm{w},\Det,\ol{\bCoa})$ can be decomposed into
the disjoint union of the following loci
\[
\begin{array}{ll}
\Bun^s(S,2,\bm{w},\Det) & \text{if $\bm{\mathfrak{0}}\in\ol{\bCoa}$}\\
\Higgs^s(S,2,\bm{w},\Det,\ol{\bCoa})(\RR)_{d,\bm{a}} & 
\text{for admissible $(d,\bm{a})$.}
\end{array}
\]
\end{proposition}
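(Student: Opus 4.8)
The plan is to read the partition off directly from Lemma \ref{lemma:real}. A point of the $\sigma$-fixed locus $\Higgs^s(S,2,\bm{w},\Det,\ol{\bCoa})(\RR)$ is a stable parabolic Higgs bundle $(E_\bullet,\eta,\Phi)$ admitting an isomorphism $(E_\bullet,\eta,\Phi)\arr{\sim}(E_\bullet,\eta,-\Phi)$, so Lemma \ref{lemma:real} yields the dichotomy $\Phi=0$ (case (a)) versus $\Phi\neq 0$ with $E_\bullet\cong(L^\vee_\bullet\otimes\Det)\oplus L_\bullet$ and $\Phi$ off-diagonal (case (b)). First I would dispose of the $\Phi=0$ part: a stable Higgs bundle with vanishing Higgs field is exactly a stable parabolic bundle $(E_\bullet,\eta)$, i.e.\ a point of $\Bun^s(S,2,\bm{w},\Det)$, and since $\Res_{p_i}(\Phi)=0$ such a point has residues in $\ol{\bCoa}$ precisely when $0\in\ol{\bcoa}_i$ for all $i$, that is when $\bm{\mathfrak{0}}\in\ol{\bCoa}$; this is the first row of the statement.

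For the points with $\Phi\neq 0$ I would attach the discrete invariant $(d,\bm{a})$. Taking $L_\bullet$ to be the eigenbundle of $\iota$ of larger parabolic degree (so that $\deg(\Det)\le 2\deg(L_\bullet)$, as Lemma \ref{lemma:real}(b) forces), I set $d:=\deg(L)$ and, following Example \ref{example:sub-bundle}, put $a_i:=0$ or $1$ according as the induced weight $w_L(p_i)$ on $L_\bullet$ equals $w_1(p_i)$ or $1-w_1(p_i)$. The crucial point is that this assignment is well-defined: for $e(d,\bm{a},\bm{w})>0$ the larger eigenbundle is singled out, while in the borderline case $e=0$ the uniqueness of the splitting in Lemma \ref{lemma:real}(b2), respectively the isomorphism of the two summands when $\Det\cong L^{\otimes 2}_\bullet$ in (b3), guarantees that $(d,\bm a)$ is independent of the choices. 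Distinct invariants then label disjoint subsets, so, once well-definedness is granted, the fibres $\Higgs^s(S,2,\bm{w},\Det,\ol{\bCoa})(\RR)_{d,\bm{a}}$ of the map $(E_\bullet,\eta,\Phi)\mapsto(d,\bm a)$, together with the $\Phi=0$ stratum $\Bun^s$, give a disjoint decomposition.

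It then remains to verify that every occurring $(d,\bm{a})$ is admissible. Condition (a) is immediate: at a degenerate puncture $b_i=1$, so any line sub-bundle inherits the single weight $w_1(p_i)$ (the degenerate column of Example \ref{example:sub-bundle}) and hence $a_i=0$ for $i\in J_{deg}$. Condition (b) is just $\deg(\Det)\le 2\deg(L_\bullet)$ rewritten as $e(d,\bm{a},\bm{w})\ge 0$, valid because $L_\bullet$ was chosen of larger degree. The substance is condition (c). Here I would argue that the nonzero off-diagonal entry $\phi\in H^0(S,\Det K L_\bullet^{-2}(P))$ forces the degree of this parabolic line bundle --- whose value reflects both the weights $w_L(p_i)$ and the residue constraints $\Res_{p_i}(\Phi)\in\ol{\bcoa}_i$ --- to be non-negative. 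Computing that degree puncture by puncture, separating the cases $\bcoa_i=\{0\}$, $\bcoa_i$ nilpotent and $\det(\bcoa_i)\neq 0$, and distinguishing degenerate-even, degenerate-odd and non-degenerate weights, the inequality becomes exactly $2d\le d_0-\chi(S)-\NO{\bm{a}}+s\even-s_0$, the corrections $s\even,s_0,\NO{\bm a}$ recording respectively the degenerate-even punctures, the punctures with vanishing residue class, and the weight choices stored in $\bm{a}$.

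The main obstacle is precisely this last degree computation: a careful local-to-global bookkeeping whose delicate ingredient is the interplay between the parabolic weight $w_L(p_i)$ and the prescribed residue class, since whether $\bcoa_i$ is zero, nilpotent or semisimple dictates the admissible pole order of $\phi$ (and of $\psi$) at $p_i$ and therefore which punctures enter the effective divisor. A secondary but real subtlety, already flagged above, is the borderline stratum $e(d,\bm a,\bm w)=0$, where the two eigenbundles share the same degree and one must lean on Lemma \ref{lemma:real}(b2)--(b3) to be sure that the invariant $(d,\bm a)$ --- hence the membership of the point in a single stratum --- is unambiguous.
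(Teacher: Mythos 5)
Your proposal is correct and follows essentially the same route as the paper, which presents Proposition \ref{prop:real} as a direct rephrasing of Lemma \ref{lemma:real} together with the degree bookkeeping (the bound $\deg\floor{\Det L_\bullet^{-2}K(P)}\geq s_0$ forced by $\phi\neq 0$ vanishing on $J_0$) that the paper carries out in the lemma opening Section \ref{sec:topology}. The only slight inaccuracy is your appeal to Lemma \ref{lemma:real}(b2)--(b3) to pin down the label $(d,\bm{a})$ when $e(d,\bm{a},\bm{w})=0$: uniqueness of the splitting does not select which summand is called $L_\bullet$, so the two mirror labels name the same stratum twice --- a point the paper itself leaves implicit and which is immaterial since all subsequent results assume $e>0$.
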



\subsection{Topology of the $\sigma$-fixed locus}\label{sec:topology}

Let $\bcoa_i$ be conjugacy classes and $\ol{\bcoa}_i$ be their closures in $\psl_2(\CC)$.
Throughout this section, we will assume that $(\bm{w},\bCoa)$ are
compatible, that $(d,\bm{a})$ is admissible and that
$e(d,\bm{a},\bm{w})>0$.

We begin with some simple observations.

\begin{lemma}
Let $(E_\bullet,\eta,\Phi,\{\pm\iota\})\in \Higgs^s(S,2,\bm{w},\Det,\ol{\bCoa})(\RR)_{d,\bm{a}}$ and
$E_\bullet=(L_\bullet^\vee\otimes\Det)\oplus L_\bullet$.
Then
\begin{itemize}
\item[(i)]
for $i\notin J_{deg}$ ($\bm{w}$ non-degenerate at $p_i$), $L^{-2}_\bullet(P+\bm{a}P)$ has parabolic twist
$1+a_i-2w_L(p_i)\in [0,1)$ and $L^2_\bullet(-\bm{a}P)$ has twist $2w_L(p_i)-a_i\in [0,1)$
at $p_i$;
\item[(ii)]
for $j\in J_{deg}$ ($\bm{w}$ degenerate at $p_j$), $L^2$ and $L^{-2}$ have trivial parabolic structure
at $p_j$.
\end{itemize}
Thus, $\deg\floor{\Det L_\bullet^{-2}(P)}=d_0-2d-\NO{\bm{a}}+s\even$ and
$\deg\floor{\Det^\vee L_\bullet^2}=2d-d_0+\NO{\bm{a}}+s\odd \geq 1-n$.
As a consequence,
\[
\begin{array}{rcl}
m:=& \deg\floor{\Det L^{-2}_\bullet K(P)}&=d_0+2g-2-2d-\NO{\bm{a}}+s\even\geq 0\\
& \deg\floor{\Det^\vee L^{2}_\bullet K(P)}&=-d_0+2g-2+n+2d+\NO{\bm{a}}+s\odd\geq 0\\
m':= & h^0(S,\Det^\vee L^2_\bullet K(P)) &=-d_0+g-1+n+2d+\NO{\bm{a}}+s\odd\geq 0
\end{array}
\]
since $\deg(\floor{\Det^\vee L^2_\bullet(P)})>0$. Thus, $m+m'=3g-3+n+s$.
\end{lemma}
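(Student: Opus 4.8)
The plan is to reduce everything to bookkeeping of parabolic weights for line bundles. Recall that tensoring a parabolic line bundle by an honest (integral) line bundle leaves all reduced weights unchanged and merely translates the integral part $\floor{\cdot}$ by that bundle; so for each parabolic line bundle occurring in the statement it suffices to record its reduced weight in $[0,1)$ at every $p_i$ together with the integer absorbed into the underlying bundle under the normalization $E_{w+1}=E_w(-P)$. By the definition of the locus $\Higgs^s(S,2,\bm{w},\Det,\ol{\bCoa})(\RR)_{d,\bm{a}}$, the weight of $L_\bullet$ at $p_i$ is $w_L(p_i)=w_1(p_i)$ if $a_i=0$ and $w_L(p_i)=1-w_1(p_i)$ if $a_i=1$, while admissibility forces $a_j=0$ at every degenerate $p_j$, where $w_1(p_j)\in\{0,\tfrac12\}$.

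For (i) and (ii) I would double the weight (respectively negate and double) and reduce modulo $\ZZ$. At a degenerate $p_j$ we have $w_L(p_j)\in\{0,\tfrac12\}$, so $\pm 2w_L(p_j)\in\ZZ$ and the reduced weight is $0$: thus $L^2$ and $L^{-2}$ carry trivial parabolic structure there, which is (ii). At a non-degenerate $p_i$ we have $w_L(p_i)\in(0,\tfrac12)$ for $a_i=0$ and $w_L(p_i)\in(\tfrac12,1)$ for $a_i=1$; a direct reduction shows that $L^2_\bullet(-\bm{a}P)$ has reduced weight $2w_L(p_i)-a_i$ and $L^{-2}_\bullet(P+\bm{a}P)$ has reduced weight $1+a_i-2w_L(p_i)$, and checking the two cases $a_i=0,1$ confirms both lie in $[0,1)$. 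This is (i), and it also records the integer absorbed into the underlying bundle at each point.

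For the degrees I would use $\deg\floor{X_\bullet}=\deg(X_\bullet)-\sum_i(\text{reduced weight of $X_\bullet$ at $p_i$})$, where $\deg(X_\bullet)$ denotes the parabolic degree; equivalently one sums the integer parts $\floor{\pm 2w_L(p_i)}$ computed above over the four cases (non-degenerate with $a_i=0$ or $a_i=1$, and $J\even_{deg}$, $J\odd_{deg}$). This yields $\deg\floor{\Det L^{-2}_\bullet(P)}=d_0-2d-\NO{\bm{a}}+s\even$ and $\deg\floor{\Det^\vee L^2_\bullet}=2d-d_0+\NO{\bm{a}}+s\odd$; tensoring by $K$ (of degree $2g-2$) adds $2g-2$ and produces the displayed formulas for $m$ and for $\deg\floor{\Det^\vee L^2_\bullet K(P)}$. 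The non-negativity $m\ge 0$ follows at once, since admissibility condition (c) rearranges exactly to $m\ge s_0\ge 0$ (alternatively, $\phi\neq 0$ forces $\deg\floor{\Det L^{-2}_\bullet K(P)}\ge 0$).

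Finally, the key input is $\deg\floor{\Det^\vee L^2_\bullet}\ge 1-n$: the parabolic degree of $\Det^\vee L^2_\bullet$ equals $e(d,\bm{a},\bm{w})>0$, and subtracting the $n$ reduced weights (each in $[0,1)$) gives $\deg\floor{\Det^\vee L^2_\bullet}>e(d,\bm{a},\bm{w})-n>-n$, hence $\ge 1-n$ by integrality; equivalently $\deg\floor{\Det^\vee L^2_\bullet(P)}>0$. This is exactly what powers the Riemann--Roch computation of $m'$. Setting $M:=\floor{\Det^\vee L^2_\bullet K(P)}$ we obtain $\deg M>2g-2$, so Serre duality gives $h^1(M)=h^0(K\otimes M^{-1})=0$; since $H^0$ of a parabolic line bundle coincides with $H^0$ of its integral part, Riemann--Roch yields $m'=h^0(M)=\deg M-g+1$, the stated formula, with $m'=h^0(M)\ge 0$ automatic and the remaining displayed non-negativity reading $\deg M=m'+g-1$. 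Adding the expressions for $m$ and $m'$ gives $m+m'=3g-3+n+s$. I expect the main obstacle to be the error-prone case-by-case weight reduction and the attendant integer bookkeeping; the conceptual crux is the identification of the parabolic degree of $\Det^\vee L^2_\bullet$ with $e(d,\bm{a},\bm{w})$, which is precisely what makes the Serre-duality vanishing, and hence Riemann--Roch, available.
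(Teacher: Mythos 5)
Your proposal is correct and follows essentially the same route as the paper's (very terse) proof: parts (i)--(ii) by elementary weight reduction, the key bound $\deg\floor{\Det^\vee L_\bullet^2}\geq 1-n$ from $e(d,\bm{a},\bm{w})>0$ together with the bounds on the weights and integrality, and then $m'$ by Riemann--Roch once $\deg\floor{\Det^\vee L^2_\bullet K(P)}>2g-2$ kills $h^1$. Your extra observations (that admissibility (c) is exactly $m\geq s_0$, and that $H^0$ of a parabolic line bundle is $H^0$ of its integral part) are correct fillings-in of steps the paper leaves implicit.
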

\begin{proof}
Parts (i) and (ii) are elementary computations.
The bound for $\deg\floor{\Det^\vee L_\bullet^2}$ follows by observing that
$0<e(d,\bm{a},\bm{w})=2d-d_0+2\sum_{i=1}^n w_L(p_i)\leq 2d-d_0+\NO{\bm{a}}+n$
because $w_L(p_i)\leq a_i+1$. Thus, $m'$ can be calculated by Riemann-Roch.
\end{proof}

Since we are assuming $e(d,\bm{a},\bm{w})>0$,
a point of $\Higgs^s(S,2,\bm{w},\Det,\ol{\bCoa})(\RR)_{d,\bm{a}}$
can be identified with a triple $(L_\bullet,\phi,\psi)$ that satisfies certain conditions,
up to isomorphism. In fact, the map $L_\bullet\rar L_\bullet$ of multiplication 
by $\lambda\in\CC^*$
induces an isomorphism of $(L_\bullet,\phi,\psi)$ 
with $(L_\bullet,\lambda^{-2}\phi,\lambda^2\psi)$.
Hence, such a point can be identified with the triple $(L_\bullet,Q,-\phi\psi)$,
where $Q$ is an effective divisor in the linear system
$|\Det \floor{L_\bullet^{-2}} K(P)|$ and
$-\phi\psi=\det(\Phi)\in H^0(S,K^2(2P))$.
Moreover, $L^{-2}_\bullet$ can be reconstructed up to isomorphism from $Q$: so, given
$(Q,\det(\Phi))$, there are exactly $2^{2g}$ choices for $L_\bullet$ and the
set of such choices is a $\Pic^0(S)[2]$-torsor.

Consider then the residues of $\det(\Phi)$. 
\begin{itemize}
\item
Suppose that $i\notin J_{deg}$ and so $\bm{w}$ is non-degenerate at $p_i$.\\
Then necessarily $\Res_{p_i}(\Phi)=0$.
Thus, we will assume that $\bcoa_i=\{0\}$ for all $i\notin J_{deg}$.
\item
Suppose that $j\in J_{deg}$ and so $\bm{w}$ is degenerate at $p_j$.\\
Then the condition on the residue at $p_j$ is not automatically satisfied. 
For $j$ in $J_{nil}$ or $J_{inv}$,
the elements in $\ol{\bcoa}_j$
are detected by their determinant $\det(\bcoa_j)$: thus it is enough to require that $(-\phi\psi)(p_j)=\det(\coa_j)$.
For $j\in J_0$, we must require that $\phi(p_j)=0$ and that $\ord_{p_j}(\phi\psi)>\ord_{p_j}(\phi)$.
\end{itemize}

Consider the space $\Xcal=\Sym^{m}(S\setminus P_{inv})\times H^0(S,K^2(P+P_{deg}))$, where 
$P_{deg}=\sum_{j\in J_{deg}} p_j$ and $P_{inv}=\sum_{j\in J_{inv}} p_j$, and the loci
\begin{align*}
\Qcal & =\{(Q,q)\in\Xcal\,|\,Q\leq \mathrm{div}(q)\}\\
\ol{\Rcal}_i & =\{(Q,q)\in\Xcal\,|\,
\Res_{p_i}(q)=\det(\coa_i) \} & \text{for all $i=1,\dots,n$.}
\end{align*}
Moreover, for $j\in J_0\cup J_{nil}$, the locus $\ol{\Rcal}_j$ can be split into
\begin{align*}
\ol{\Rcal}^-_j & = \{(Q,q)\in\Xcal\,|\, p_j\in Q\} & \text{corresponding to $\phi(p_j)=0$}\\
\ol{\Rcal}^+_j & = \{(Q,q)\in\Xcal\,|\, \ord_{p_j}(q)>\mathrm{mult}_{p_j}(Q)\}
& \text{corresponding to $\psi(p_j)=0$}
\end{align*}
and we call $\Rcal^{0}_j:=\ol{\Rcal}^+_j\cap\ol{\Rcal}^-_j$ for all $j\in  J_0$.
Finally, for every $\bm{\e}:J_{nil}\rar \{+,-\}$ we denote by
$P_\pm(\bm{\e})$ the subsets of points $p_j$ such that $\bm{\e}(j)=\pm$
and let $s_\pm(\bm{\e})=\# P_\pm(\bm{\e})$.
Then we define
\[
\ol{\Rcal}^{\bm{\e}}:=
\left(\bigcap_{j\in J_{inv}}\ol{\Rcal}_j\right)
\cap
\left(
\bigcap_{j\in J_{nil}}
\ol{\Rcal}_j^{\e_j}
\right)
\cap
\left(
\bigcap_{j\in J_0}
\Rcal_j^0
\right)
\quad
\text{and} \quad \ol{\Rcal}=\bigcup_{\bm{\e}} \ol{\Rcal}^{\bm{\e}}.
\]
and we call $\Rcal^{\bm{\e}}:=\ol{\Rcal}^{\bm{\e}}\setminus \bigcup_{j\in J_{nil}} \Rcal_j^0$
and $\Rcal:=\bigcup_{\bm{\e}}\Rcal^{\bm{\e}}$.

The locally closed subvariety $\Qcal$ has codimension $m$ in $\Xcal$ and it is isomorphic
to a holomorphic vector bundle of rank $m'$ over
$\Sym^{m}(S\setminus P_{inv})$ and so the above discussion leads to the following
conclusion.

\begin{proposition}[Topology of $\sigma$-fixed components]\label{prop:topology}
The loci in $\Xcal$ defined above satisfy the following properties.
\begin{itemize}
\item[(a)]
The locus
$\Qcal\cap\ol{\Rcal}^{\bm{\e}}$ is a holomorphic affine
bundle of rank $m'-[s_{inv}+s_0+s_+(\bm{\e})]$ over $\mathrm{Sym}^{m-[s_0+s_-(\bm{\e})]}(S\setminus P_{inv})$.
The locus $\Qcal\cap\Rcal^{\bm{\e}}$ is obtained from $\Qcal\cap\ol{\Rcal}^{\bm{\e}}$
by first restricting the affine
bundle over $\mathrm{Sym}^{m-[s_0+s_-(\bm{\e})]}\left(S\setminus (P_+(\bm{\e})\cup P_{inv})\right)$
and then removing $s_-(\bm{\e})$ affine subbundles of codimension $1$.
\item[(b)]
The locus $\Qcal\cap\ol{\Rcal}$ is connected, has pure codimension $s+s_0+m$ in $\Xcal$
and consists of the irreducible components
$\Qcal\cap \ol{\Rcal}^{\bm{\e}}$ of dimension $3g-3+n-s_0$.
The locus $\Qcal\cap\Rcal$ is the disjoint union of all $\Qcal\cap\Rcal^{\bm{\e}}$.
\item[(c)]
The morphism
\[
\xymatrix@R=0in{
\Higgs^s(S,2,\bm{w},\Det,\ol{\bCoa})(\RR)_{d,\bm{a}}\ar[rr]&& \Xcal\\
(E_\bullet,\eta,\Phi)
\ar@{|->}[rr] &&
(\mathrm{div}(\phi),\det(\Phi))
}
\]
is a $\Pic^0(S)[2]$-torsor over $\Qcal\cap\ol{\Rcal}$ and 
$\Higgs^s(S,2,\bm{w},\Det,\bCoa)(\RR)_{d,\bm{a}}$
is a $\Pic^0(S)[2]$-torsor over $\Qcal\cap\Rcal$.
\item[(d)]
The restriction $\Higgs^s(S,2,\bm{w},\Det,\ol{\bCoa})(\RR)^{\bm{\e}}_{d,\bm{a}}$
of the $\Pic^0(S)[2]$-torsor 
in (c) over the component $\Qcal\cap \Rcal^{\bm{\e}}$ 
is connected, unless $\Qcal\cap\ol{\Rcal}^{\bm{\e}}$ is an affine space
(i.e. $m-[s_0+s_-(\bm{\e})]=0$): in this case it is necessarily trivial.
\end{itemize}
\end{proposition}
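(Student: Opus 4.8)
The plan is to push everything through the birational parametrization $(L_\bullet,\phi,\psi)\leftrightarrow(Q,q)$ set up just before the statement, with $Q=\mathrm{div}(\phi)$ and $q=\det(\Phi)=-\phi\psi$, and to read off each of (a)--(d) from the geometry of the projection $\Qcal\to\Sym^m(S\setminus P_{inv})$. The first step is purely dictionary work: translate the conditions defining $\ol{\Rcal}^{\bm{\e}}$ and $\Rcal^{\bm{\e}}$ into conditions on $(Q,q)$. The key bookkeeping remark is that the ambient space $\Xcal$ already builds in the vanishing of the residue at the non-degenerate punctures, since a section of $K^2(P+P_{deg})$ has only a simple pole at $p_i$ for $i\notin J_{deg}$ and hence no double-pole coefficient, which is exactly what computes $\det\Res_{p_i}(\Phi)=\det(\coa_i)=0$. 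Thus only the degenerate punctures contribute, through the dictionary $p_j\in Q\Leftrightarrow\phi(p_j)=0$ (the $\ol{\Rcal}^-_j$-condition, living on the base), $\ord_{p_j}(q)>\mathrm{mult}_{p_j}(Q)\Leftrightarrow\psi(p_j)=0$ (the $\ol{\Rcal}^+_j$-condition, living on the fibre), and $\Res_{p_j}(q)=\det(\coa_j)\neq 0$ for $j\in J_{inv}$.

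For (a) I would start from the fact, recalled just above, that $\Qcal\to\Sym^m(S\setminus P_{inv})$ is a rank-$m'$ holomorphic vector bundle with fibre $H^0\!\big(S,K^2(P+P_{deg})(-Q)\big)$ over $Q$. Imposing the $\ol{\Rcal}^{\bm{\e}}$-conditions then (i) forces the $s_0+s_-(\bm{\e})$ points $\{p_j:j\in J_0\cup P_-(\bm{\e})\}$ into $Q$, peeling them off and leaving the base $\Sym^{m-[s_0+s_-(\bm{\e})]}(S\setminus P_{inv})$, and (ii) imposes on the fibre the $s_0+s_+(\bm{\e})$ linear conditions $\psi(p_j)=0$ (for $j\in J_0\cup P_+(\bm{\e})$) and the $s_{inv}$ affine conditions $\Res_{p_j}(q)=\det(\coa_j)$. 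The substantive point is that these fibre conditions are independent, i.e.\ the associated jet-evaluation map on $H^0(S,K^2(P+P_{deg})(-Q))$ is surjective; this is a Riemann--Roch/$H^1$-vanishing statement coming from the positivity of $K^2(P+P_{deg})$ away from the finitely many imposed points, and it produces the asserted affine bundle of rank $m'-[s_{inv}+s_0+s_+(\bm{\e})]$. Passing to $\Rcal^{\bm{\e}}$ deletes $\bigcup_{j\in J_{nil}}\Rcal^0_j$: for $\e_j=+$ this forbids $p_j\in Q$, restricting the base to $\Sym^{\cdots}(S\setminus(P_+(\bm{\e})\cup P_{inv}))$, while for $\e_j=-$ it forbids $\psi(p_j)=0$, removing one codimension-one affine subbundle per point, $s_-(\bm{\e})$ in all.

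Claim (b) is then a dimension count plus a gluing argument. Each $\Qcal\cap\ol{\Rcal}^{\bm{\e}}$ has dimension $(m-s_0-s_-(\bm{\e}))+(m'-s_{inv}-s_0-s_+(\bm{\e}))$, which, using $s_+(\bm{\e})+s_-(\bm{\e})=\#J_{nil}$ and $m+m'=3g-3+n+s$, collapses to $3g-3+n-s_0$ independently of $\bm{\e}$; since $\dim\Xcal=m+(3g-3+n+s)$ this gives pure codimension $s+s_0+m$. Each piece is an affine bundle over a symmetric product of an irreducible curve, hence irreducible, so these are exactly the irreducible components. For connectedness of $\Qcal\cap\ol{\Rcal}$ I would flip one $\e_j$ at a time: $\ol{\Rcal}^{\bm{\e}}$ and its neighbour glue along their common locus $\Rcal^0_j=\{\phi(p_j)=\psi(p_j)=0\}$, so it suffices to check that these gluing loci are nonempty (a small dimension count) and to note that every piece reaches the all-$+$ piece by such flips. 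Removing precisely the $\Rcal^0_j$ makes the $\Rcal^{\bm{\e}}$ pairwise disjoint, because $\ol{\Rcal}^+_j\cap\ol{\Rcal}^-_j=\Rcal^0_j$. For (c), given $(Q,q)$ the bundle $\floor{\Det L^{-2}_\bullet K(P)}\cong\Ocal_S(Q)$ recovers $\floor{L^{-2}_\bullet}$, hence $L_\bullet$ up to the $\Pic^0(S)[2]$-torsor of square roots, while $(Q,q)$ recovers $(\phi,\psi)$ up to the residual $\CC^*$; Lemma \ref{lemma:real} guarantees that the hypothesis $e(d,\bm{a},\bm{w})>0$ makes every such $(E_\bullet,\Phi)$ stable, so the map is a well-defined $\Pic^0(S)[2]$-torsor over $\Qcal\cap\ol{\Rcal}$ (with local triviality furnished by a Poincar\'e family), and its restriction to $\Rcal$ gives the $\bCoa$-statement.

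Finally, for (d) I would recognise the $\Pic^0(S)[2]$-torsor over a component $\Qcal\cap\Rcal^{\bm{\e}}$ as the pullback, along the Abel--Jacobi-type map $(Q,q)\mapsto\floor{L^{-2}_\bullet}\cong\Ocal_S(Q)\otimes(\text{fixed})$, of the doubling map $M\mapsto M^{\otimes 2}$ on $\Pic(S)$, whose restriction to each degree component is a connected $\Pic^0(S)[2]$-cover. The pullback is therefore connected exactly when the classifying homomorphism $\pi_1(\Qcal\cap\Rcal^{\bm{\e}})\to\Pic^0(S)[2]\cong(\ZZ/2)^{2g}$ is onto; since this map factors through the base $\Sym^{k}(\Sigma)$ with $k=m-[s_0+s_-(\bm{\e})]$, and Abel--Jacobi surjects on $H_1$ onto $H_1(S)$ as soon as $k\geq 1$, connectedness holds whenever the base is positive-dimensional. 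When $k=0$ the base is a point, the classifying map is constant, and the cover is the trivial torsor, i.e.\ $2^{2g}$ disjoint affine spaces --- precisely the stated exception. The main obstacle is the independence/surjectivity underlying the affine-bundle structure in (a), together with the nonemptiness of the gluing loci in (b); once these are in hand, the monodromy surjectivity in (d) is a standard Abel--Jacobi computation.
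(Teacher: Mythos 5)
Your proposal is correct and follows essentially the same route as the paper: parts (a)--(c) are exactly the dictionary $(E_\bullet,\eta,\Phi)\leftrightarrow(Q,\det\Phi)$ and the dimension counts that the paper sets up in the discussion preceding the proposition, and your Abel--Jacobi monodromy argument for (d) is the same computation the paper carries out by explicitly lifting a loop through the surjection $\pi_1(S\setminus P_{inv})\twoheadrightarrow\pi_1(\Pic^0(S))$ to connect $(L,Q)$ to $(L\otimes A,Q)$. You supply somewhat more detail than the paper on the independence of the fibre conditions in (a) and the gluing along $\Rcal^0_j$ in (b), but the underlying argument is the same.
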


We stress that, by definition, $\Qcal\cap \Rcal^{\bm{\e}}=\emptyset$ if $m-[s_0+s_-(\bm{\e})]<0$ 
or $m'-[s_{inv}+s_0+s_+(\bm{\e})]<0$.

\begin{corollary}[Topology of {$\Pic^0(S)[2]$}-quotient of $\sigma$-fixed irreducible components]\label{cor:quotient-topology}
The quotient $\Higgs^s(S,2,\bm{w},\Det,\ol{\bCoa})(\RR)_{d,\bm{a}}^{\bm{\e}}/\Pic^0(S)[2]$
is isomorphic to a holomorphic affine bundle of rank
$m'-[s-s_-(\bm{\e})]$ over $\Sym^{m-[s_0+s_-(\bm{\e})]}(S\setminus P_{inv})$
and $\Higgs^s(S,2,\bm{w},\Det,\bCoa)(\RR)_{d,\bm{a}}^{\bm{\e}}/\Pic^0(S)[2]$
is isomorphic to the complement of $s_-(\bm{\e})$ affine codimension $1$ subbundles
inside a holomorphic affine bundle of rank
$m'-[s-s_-(\bm{\e})]$ over $\Sym^{m-[s_0+s_-(\bm{\e})]}\left(S\setminus (P_+(\bm{\e})\cup P_{inv})\right)$.
\end{corollary}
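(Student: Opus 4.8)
The plan is to deduce the statement directly from Proposition \ref{prop:topology} by quotienting its torsor description by the finite group $\Pic^0(S)[2]$ and then reconciling the two rank formulas through a short combinatorial identity; once this bookkeeping is in place, there is essentially nothing further to prove.

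First I would recall from Proposition \ref{prop:topology}(c) that the assignment $(E_\bullet,\eta,\Phi)\mapsto(\mathrm{div}(\phi),\det(\Phi))$ exhibits $\Higgs^s(S,2,\bm{w},\Det,\ol{\bCoa})(\RR)^{\bm{\e}}_{d,\bm{a}}$ as a $\Pic^0(S)[2]$-torsor over $\Qcal\cap\ol{\Rcal}^{\bm{\e}}$, and likewise $\Higgs^s(S,2,\bm{w},\Det,\bCoa)(\RR)^{\bm{\e}}_{d,\bm{a}}$ as a $\Pic^0(S)[2]$-torsor over $\Qcal\cap\Rcal^{\bm{\e}}$. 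The torsor structure comes from the ambiguity in reconstructing $L_\bullet$ from the pair $(Q,\det\Phi)$: the group $\Pic^0(S)[2]$ acts by tensoring $L_\bullet$ with a $2$-torsion line bundle, which leaves $L_\bullet^{\pm 2}$, and hence $\phi$, $\psi$ and $\det(\Phi)$, unchanged. Since this action is free, the quotient by $\Pic^0(S)[2]$ is canonically isomorphic to the base of the torsor; thus the two quotients in the statement are isomorphic to $\Qcal\cap\ol{\Rcal}^{\bm{\e}}$ and $\Qcal\cap\Rcal^{\bm{\e}}$ respectively, with no orbifold points introduced.

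It then remains to match ranks. The index set $J_{deg}$ is partitioned as $J_{deg}=J_0\sqcup J_{nil}\sqcup J_{inv}$, so that $s=s_0+\#J_{nil}+s_{inv}$, while the splitting $\bm{\e}:J_{nil}\rar\{+,-\}$ gives $\#J_{nil}=s_+(\bm{\e})+s_-(\bm{\e})$. Combining these yields $s-s_-(\bm{\e})=s_{inv}+s_0+s_+(\bm{\e})$, so the rank $m'-[s-s_-(\bm{\e})]$ appearing in the Corollary coincides with the rank $m'-[s_{inv}+s_0+s_+(\bm{\e})]$ of the affine bundle $\Qcal\cap\ol{\Rcal}^{\bm{\e}}$ produced in Proposition \ref{prop:topology}(a). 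Feeding the explicit descriptions of $\Qcal\cap\ol{\Rcal}^{\bm{\e}}$ and $\Qcal\cap\Rcal^{\bm{\e}}$ from Proposition \ref{prop:topology}(a) into these two identifications gives precisely the claimed holomorphic affine bundle over $\Sym^{m-[s_0+s_-(\bm{\e})]}(S\setminus P_{inv})$ in the closed case, and its restriction over $\Sym^{m-[s_0+s_-(\bm{\e})]}(S\setminus(P_+(\bm{\e})\cup P_{inv}))$ with $s_-(\bm{\e})$ affine codimension-$1$ subbundles removed in the open case.

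The only point requiring care, and the one I regard as the main (if modest) obstacle, is the freeness of the $\Pic^0(S)[2]$-action: this is what guarantees that the quotient is a genuine variety isomorphic to the base rather than a stacky quotient. As this is already encoded in the torsor statement of Proposition \ref{prop:topology}(c), the rest of the argument is pure bookkeeping with the indices $s$, $s_0$, $s_{inv}$ and $s_\pm(\bm{\e})$.
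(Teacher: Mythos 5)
Your proposal is correct and follows exactly the route the paper intends: Corollary \ref{cor:quotient-topology} is stated as an immediate consequence of Proposition \ref{prop:topology}, obtained by identifying the $\Pic^0(S)[2]$-quotient of the torsor in part (c) with its base $\Qcal\cap\ol{\Rcal}^{\bm{\e}}$ (resp.\ $\Qcal\cap\Rcal^{\bm{\e}}$) and then invoking the description of that base in part (a). Your verification of the index identity $s-s_-(\bm{\e})=s_{inv}+s_0+s_+(\bm{\e})$, coming from the partition $J_{deg}=J_0\sqcup J_{nil}\sqcup J_{inv}$ and $\#J_{nil}=s_+(\bm{\e})+s_-(\bm{\e})$, is precisely the bookkeeping needed to match the two rank formulas.
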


\begin{proof}[Proof of Proposition \ref{prop:topology}]
Considering the above discussion, we are only left to prove (d).

The action of $\Pic^0(S)[2]$ on
$\Higgs^s(S,2,\bm{w},\Det,\ol{\bCoa})(\RR)^{\bm{\e}}_{d,\bm{a}}$ is given by
$A\cdot (E_\bullet,\eta,\Phi)\mapsto (E_\bullet\otimes A,\eta,\Phi)$ for $A\in\Pic^0(S)[2]$.

If $m-[s_0+s_-(\bm{\e})]=0$, then $\Qcal\cap\ol{\Rcal}^{\bm{\e}}$ is an affine space and so the torsor is trivial.

Assume now $m-[s_0+s_-(\bm{\e})]>0$ and fix $A\in\Pic^0(S)[2]$.
We want to show that every $(E_\bullet,\eta,\Phi)$
in $\Higgs^s(S,2,\bm{w},\Det,\ol{\bCoa})(\RR)^{\bm{\e}}_{d,\bm{a}}$
can be connected to $(E_\bullet\otimes A,\eta,\Phi)$ by a continuous path.

Consider the map $f:\Higgs^s(S,2,\bm{w},\Det;\bCoa)(\RR)^{\bm{\e}}_{d,\bm{a}}
\rar \Pic^d(S)\times \Sym^{m-s_0(\bm{\e})-s_-(\bm{\e})}(S\setminus P_{inv})$
defined by $f(E_\bullet,\eta,\Phi)=(L,[\phi])$,
where
$E_\bullet=(L^\vee_\bullet\otimes\Det)\oplus L_\bullet$ and
$\Phi=\left(\begin{array}{cc} 0 & \phi \\ \psi & 0\end{array}\right)$.
Since the knowledge of $\bm{w}$ and $\bm{a}$ allows to reconstruct
the parabolic bundle $L_\bullet$ out of $L$,
the image $\widetilde{\mathcal{S}}$ of $f$ can be identified to the locus of couples $(L,Q)$ such that 
$Q_{fix}+Q\in|\Det L^{-2}_\bullet K(P)|$, where $Q_{fix}:=\sum_{j\in J_0} p_j+\sum_{\e_j=-}p_j$.
Note that $\widetilde{\mathcal{S}}$
is an \'etale cover  over $\Sym^{m-s_0(\bm{\e})-s_-(\bm{\e})}(S\setminus P_{inv})$
of degree $2^{2g}$ and $f$ is a fibration with fiber $\CC^{m'-[s-s_-(\bm{\e})]}$.
Thus, it is enough to show that $\widetilde{\mathcal{S}}$ is connected.

Now fix $(L,Q)\in\widetilde{\mathcal{S}}$ and let $B:[0,1]\rar \Pic^0(S)$
be a continuous path from $\Ocal_S$ to $A$, so that
$B^{-2}:[0,1]\rar\Pic^0(S)$ is a closed path.
Since $m-[s_0+s_-(\bm{\e})]>0$, we can choose a point $x\in Q$ and 
we can consider the map
\[
\xymatrix@R=0in{
S\setminus P_{inv} \ar[r] & \Pic^0(S)\\
y \ar@{|->}[r] & \Ocal_S(y-x)
}
\]
which induces a surjection $\pi_1(S\setminus P_{inv})\twoheadrightarrow \pi_1(\Pic^0(S))$.
Thus, there exists a path $Y:[0,1]\rar S\setminus P_{inv}$ based at $Y(0)=Y(1)=x$
which is mapped to a path homotopic to $B^{-2}$.
Define $Q(t):=Q-x+Y(t)$ and let $A(t)$ be the unique continuous path in $\Pic^0(S)$
such that $A(0)=\Ocal_S$ and $A(t)^2=\Ocal_S(x-Y(t))$. 
By definition, $Q_{fix}+Q(t)\in |\Det (A(t)\otimes L_\bullet)^{-2}K(P)|$.
Since the path $A^{-2}$
is homotopic to $B^{-2}$, we have $A(1)=B(1)=A$ and so the path
$t\mapsto (L\otimes A(t),Q(t))$ joins $(L,Q)$ and $(L\otimes A,Q)$.
\end{proof}

Provided $e(d,\bm{a},\bm{w})$ remains positive,
Proposition \ref{prop:topology} shows that the isomorphism class of
the moduli space $\Higgs^s(S,2,\bm{w},\Det,\ol{\bCoa})(\RR)^{\bm{\e}}_{d,\bm{a}}$
remains constant as a parabolic weight $w_1(p_i)$ is varied
within the interval $(0,1/2)$ and $\bcoa_i$ is kept equal to $\{0\}$.
Moreover, if $w_1(p_i)\in (0,1/2)$
is pushed to $w'_1(p_i)=0;\ 1/2$ and the class $\bcoa_i$ is switched
to $\bcoa'_i=\{\text{nilpotents}\}$, then the moduli space is isomorphic to
some $\Higgs^s(S,2,\bm{w'},\Det,\ol{\bCoa}')(\RR)^{\bm{\e'}}_{d',\bm{a'}}$,
where $\e'_i=+$ if $e(d',\bm{a'},\bm{w'})>e(d,\bm{a},\bm{w})>0$, and $\e'_i=-$ if 
$0<e(d',\bm{a'},\bm{w'})<e(d,\bm{a},\bm{w})$.
More precisely, we have the following.

\begin{corollary}[Varying the parabolic weights]\label{cor:moving}
Fix $\bCoa$, $d$, $\bm{a}$ and $\bm{w}$ such that $e=e(d,\bm{a},\bm{w})>0$, and
assume that $\bm{w}$ is non-degenerate at $p_i$ and $\bcoa_i=\{0\}$.
Then $\Higgs^s(S,2,\bm{w},\Det,\ol{\bCoa})(\RR)^{\bm{\e}}_{d,\bm{a}}$ is isomorphic to
\begin{itemize}
\item[(a)]
$\Higgs^s(S,2,\bm{w'},\Det,\ol{\bCoa})(\RR)^{\bm{\e'}}_{d',\bm{a'}}$ 
with $d'=d$, $\bm{a'}=\bm{a}$, $\bm{\e'}=\bm{\e}$ and
for every $\bm{w'}$ 
that differs from $\bm{w}$ only on the $i$-th entrance and
such that $0<w'_1(p_i)<1/2$ (see also Nakajima \cite{nakajima:hyperkaehler});
\item[(b)]
$\Higgs^s(S,2,\bm{w'},\Det,\ol{\bCoa}')(\RR)^{\bm{\e'}}_{d',\bm{a'}}$ where
$\bm{w'},\bCoa',\bm{a'}$ differ from $\bm{w},\bCoa,\bm{a}$ only on the $i$-th entrance,
$\bcoa'_i=\{\text{nilpotents}\}$, $a'_i=0$ and either of the following hold:
\[
\begin{array}{|c|c|c|c|c|}
\hline
a_i & w'_1(p_i) & \e'_i & d' & e'\\
\hline\hline
0 & 0 & - & d & e-2w_1(p_i)\\
\hline
0 & 1/2 & + & d & e+(1-2w_1(p_i))\\
\hline
1 & 0 & + & d+1 & e+2w_1(p_i)\\
\hline
1 & 1/2 & - & d & e-(1-2w_1(p_i))\\
\hline
\end{array}
\]
\end{itemize}
as long as $e'=e(d',\bm{a'},\bm{w'})>0$.
\end{corollary}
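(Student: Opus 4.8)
The plan is to deduce both statements from the explicit parametrization obtained in Proposition \ref{prop:topology} and Corollary \ref{cor:quotient-topology}: each irreducible component $\Higgs^s(S,2,\bm{w},\Det,\ol{\bCoa})(\RR)^{\bm{\e}}_{d,\bm{a}}$ is a $\Pic^0(S)[2]$-torsor over a holomorphic affine bundle of rank $R:=m'-[s-s_-(\bm{\e})]$ over $\Sym^{B}(S\setminus P_{inv})$, where $B:=m-[s_0+s_-(\bm{\e})]$. Since $m+m'=3g-3+n+s$, one has $B+R=3g-3+n-s_0$, so once we know that $s_0$, $P_{inv}$ and $B$ are unchanged by the prescribed modification of the data, the rank $R$ is automatically unchanged as well, and the two components are isomorphic as $\Pic^0(S)[2]$-torsors over isomorphic bases. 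The only extra thing to monitor is that $e'=e(d',\bm{a'},\bm{w'})$ stays positive, so that Proposition \ref{prop:topology} applies to the modified datum; this is part of the hypothesis. Thus everything reduces to a bookkeeping of the integers $m,m',s,s_0,s_\pm(\bm{\e})$ and of the divisor $P_{inv}$ under each operation.

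For part (a), keep $p_i$ non-degenerate and $\bcoa_i=\{0\}$ and let $w_1(p_i)$ range over $(0,\frac{1}{2})$. Then $p_i$ never enters $J_{deg}$, so the index sets $J_{deg},J_0,J_{nil},J_{inv}$ and hence $s,s_0,s\even,s\odd,P_{inv}$ are all constant; by the twist computation preceding Proposition \ref{prop:topology} the parabolic twists of $L^{\pm2}_\bullet(\cdots)$ at $p_i$ stay in $(0,1)$, so the integral parts $\floor{\Det L^{-2}_\bullet K(P)}$ and $\floor{\Det^\vee L^2_\bullet K(P)}$ do not jump and $m,m'$ are constant. Finally $e=2\deg(L_\bullet)-d_0$ varies continuously but stays positive by hypothesis, so stability is preserved. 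Hence $(B,R,P_{inv},\bm{\e})$ are literally unchanged and the tautological identification of the underlying triples $(L,\phi,\psi)$ provides the isomorphism; this is Nakajima's observation \cite{nakajima:hyperkaehler}.

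For part (b), $p_i$ passes from non-degenerate to degenerate with $\bcoa'_i$ a nonzero nilpotent class, so it enters $J_{nil}$, $s$ increases by $1$ (into $s\even$ if $w'_1(p_i)=0$, into $s\odd$ if $w'_1(p_i)=\frac{1}{2}$), while $s_0$ and $P_{inv}$ are unchanged because no invertible residue is created. First I compute $e'$ from $e(d,\bm{a},\bm{w})=2d-d_0+2\sum_i w_L(p_i)$, using $w'_L(p_i)=w'_1(p_i)$ (as $a'_i=0$, forced by admissibility at a degenerate point) and the prescribed $d'$; this reproduces the last column of the table in each of the four lines, and the sign rule recorded there, namely $\e'_i=+$ exactly when $e'>e$ and $\e'_i=-$ when $e'<e$, follows. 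It then remains to check $\Delta B=0$, i.e. $\Delta m=\Delta s_-(\bm{\e})$: substituting $m=d_0+2g-2-2d-\NO{\bm{a}}+s\even$ so that $\Delta m=-2\Delta d-\Delta\NO{\bm{a}}+\Delta s\even$, together with the shifts of $(d,\bm{a},s\even)$ dictated by the table, gives $\Delta m=1$ in the two lines with $\e'_i=-$ and $\Delta m=0$ in the two lines with $\e'_i=+$, matching $\Delta s_-(\bm{\e})$ in each case. With $\Delta s_0=0$, $\Delta P_{inv}=0$ and $B+R=3g-3+n-s_0$, the triple $(B,R,P_{inv})$ is preserved, and Corollary \ref{cor:quotient-topology} (applied to $\ol{\bCoa}$ and to $\ol{\bCoa'}$) yields the asserted isomorphism, valid as long as $e'>0$.

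The delicate point is not the arithmetic but the determination of the sign $\e'_i$ and, correlatively, of the pair $(d',\bm{a'})$: these encode which of the two integral line bundles $\floor{\Det L^{-2}_\bullet K(P)}$ (governing $\phi$) and $\floor{\Det^\vee L^2_\bullet K(P)}$ (governing $\psi$) absorbs the jump as the weight reaches the boundary of $[0,\frac{1}{2}]$, equivalently whether the surviving nilpotent residue forces $\phi(p_i)=0$ (so $p_i\in P_-(\bm{\e'})$, $\e'_i=-$) or $\psi(p_i)=0$ (so $p_i\in P_+(\bm{\e'})$, $\e'_i=+$). I would pin this down from the same twist computation, checking that the side on which the degree jumps is precisely the side on which $e$ moves; once this is matched, the required change of $(d',\bm{a'})$ is forced by admissibility (imposing $a'_i=0$) together with the constraint $\det(E_\bullet)\cong\Det$, and the four lines of the table are exactly the four combinations of $a_i\in\{0,1\}$ and $w'_1(p_i)\in\{0,\frac{1}{2}\}$.
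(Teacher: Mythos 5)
Your proposal is correct and follows essentially the same route as the paper, which deduces the corollary from the explicit description of the components as $\Pic^0(S)[2]$-torsors over affine bundles on symmetric products (Proposition \ref{prop:topology} and Corollary \ref{cor:quotient-topology}) together with the sign rule $\e'_i=+$ iff $e'>e$; your bookkeeping of $m,m',s,s_0,s_\pm(\bm{\e}),P_{inv}$ checks out in all four lines of the table ($\Delta m=\Delta s_-(\bm{\e})$ and $\Delta(m'-s+s_-(\bm{\e}))=0$ in each case). The paper leaves this verification implicit in the paragraph preceding the statement, so your write-up is simply a fleshed-out version of the intended argument.
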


Similarly,
since $\Res_{p_i}(q)=\det(\bcoa_i)$ is an affine equation
in $H^0(S,K^2(P+P_{deg}))$, we also have the following result.

\begin{corollary}[Varying the quadratic residue of $\det(\Phi)$]
Let $(\bm{w},\bCoa),d,\bm{a}$ be such that $e(d,\bm{a},\bm{w})>0$
and assume that $\det(\bcoa_i)>0$.
Then $\Higgs^s(S,2,\bm{w},\Det,\ol{\bCoa})^{\bm{\e}}_{d,\bm{\a}}$
is isomorphic to $\Higgs^s(S,2,\bm{w},\Det,\ol{\bCoa}')^{\bm{\e}}_{d,\bm{\a}}$,
where $\bCoa'$ differs from $\bCoa$ only on the $i$-th entrance 
and $\det(\bcoa'_{i})>0$.
\end{corollary}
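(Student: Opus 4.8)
The plan is to build the isomorphism by a fiberwise translation of the quadratic differential $\det(\Phi)$, taking advantage of the fact, emphasized just before the statement, that the condition at $p_i$ is affine, together with the crucial feature that $p_i\in P_{inv}$ is \emph{excluded} from the base $\Sym^{m}(S\setminus P_{inv})$ of the fibration of Proposition \ref{prop:topology}. Write $\delta=\det(\coa_i)>0$ and $\delta'=\det(\coa'_i)>0$. Keeping $\det>0$ guarantees that $i$ stays in $J_{inv}$, so that the combinatorial data $d,\bm{a},\bm{\e}$ and the integers $m,m',s,s_\pm(\bm{\e})$ are unchanged, and only the single affine locus $\ol{\Rcal}_i=\{(Q,q)\in\Xcal\,|\,\Res_{p_i}(q)=\delta\}$ gets replaced by its parallel translate with $\delta'$. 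By Proposition \ref{prop:topology}(c) it then suffices to produce an isomorphism between the corresponding loci $\Qcal\cap\Rcal^{\bm{\e}}$ that is compatible with the $\Pic^0(S)[2]$-torsor structure.

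First I would construct the translation over the base. For a divisor $Q\in\Sym^{m}(S\setminus P_{inv})$ the fiber of $\Qcal$ is $H^0(S,K^2(P+P_{deg})(-Q))$, and $p_i\notin\mathrm{supp}(Q)$. Over the locus of $Q$ for which $\ol{\Rcal}_i$ is nonempty — i.e.\ where $q$ can have a genuine double pole at $p_i$, which is exactly the locus carrying the points of the moduli space when $\delta\neq 0$ and is the \emph{same} for $\delta$ and $\delta'$ — the functional $\Res_{p_i}$ is surjective onto $\CC$. Assembling it with the (generalized) residue/jet functionals at the remaining constrained points $p_j$ ($j\in J_{inv}\setminus\{i\}$, and the higher-order data governing $\Rcal^\pm_j,\Rcal^0_j$ for $j\in J_{nil}\cup J_0$) gives a surjective morphism of vector bundles $\Qcal\to\underline{\CC}^{\,k}$; choosing a splitting produces a section $\eta$ of $\Qcal$, algebraic in $Q$, with $\Res_{p_i}(\eta)=\delta'-\delta$ and all other prescribed data equal to $0$. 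Translation $(Q,q)\mapsto(Q,q+\eta)$ then preserves $Q\le\mathrm{div}(q)$ (since $\eta$ vanishes on $Q$), carries $\ol{\Rcal}_i$ for $\delta$ onto $\ol{\Rcal}_i$ for $\delta'$, and leaves untouched every other defining locus $\ol{\Rcal}_j,\Rcal^\pm_j,\Rcal^0_j$ because the residues and orders controlling them are unchanged.

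Next I would lift this to the Higgs moduli. The map is a translation in the $q=\det(\Phi)$–direction over the identity of $\Sym^{m}(S\setminus P_{inv})$, and the $\Pic^0(S)[2]$-labeling of a point $(E_\bullet,\eta,\Phi)$ — the choice of square root reconstructing $L^{-2}_\bullet$ — depends only on $Q=\mathrm{div}(\phi)$, not on $q$. Hence the translation fixes that labeling and lifts to a $\Pic^0(S)[2]$-equivariant isomorphism of torsors, giving the asserted isomorphism $\Higgs^s(S,2,\bm{w},\Det,\ol{\bCoa})^{\bm{\e}}_{d,\bm{a}}\cong\Higgs^s(S,2,\bm{w},\Det,\ol{\bCoa}')^{\bm{\e}}_{d,\bm{a}}$, compatible with the affine-bundle descriptions of Corollary \ref{cor:quotient-topology} (whose rank and base indeed do not involve $\det(\coa_i)$).

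The hard part will be the \emph{family} statement: producing $\eta$ algebraically over the whole relevant base, i.e.\ splitting the residue surjection of bundles. Fiberwise surjectivity (over the nonemptiness locus) is the easy input; a global splitting amounts to the vanishing of the connecting obstruction in the $H^1$ of the kernel subbundle. I would address this either by realizing the splitting directly from the short exact sequence of bundles over the base defined by the residue/jet conditions — using that these functionals are evaluations at points disjoint from $\mathrm{supp}(Q)$, which remain surjective on global sections — or, failing an outright vanishing, by constructing $\eta$ on an open cover and gluing, noting that two translating sections with equal prescribed data differ by a section of the kernel and therefore induce the \emph{same} isomorphism on $\Qcal\cap\Rcal^{\bm{\e}}$. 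Once the family $\eta$ is secured, the remaining verifications are the elementary residue and order bookkeeping indicated above.
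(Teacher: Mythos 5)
Your overall strategy is the paper's own: the paper disposes of this corollary in one line by observing that $\Res_{p_i}(q)=\det(\bcoa_i)$ is an affine equation in $H^0(S,K^2(P+P_{deg}))$, which is exactly the translation argument you spell out, and your bookkeeping (the combinatorial data $d,\bm{a},\bm{\e},m,m'$ are untouched; the translation preserves $\Qcal$ and all the other defining loci because $\eta$ vanishes on $Q$ and has zero residue/jet data elsewhere; and it lifts to the $\Pic^0(S)[2]$-torsor because the labelling depends only on $Q=\mathrm{div}(\phi)$) is correct. The one point you rightly isolate as nontrivial --- the existence of a \emph{global} holomorphic section $\eta$ of $\Qcal$ with $\Res_{p_i}(\eta)=\delta'-\delta$ and vanishing remaining data --- is, however, not settled by either of your two proposed routes as written. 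The fallback via local construction and gluing is actually wrong: two translating sections $\eta,\eta'$ with the same prescribed data differ by a section of the kernel bundle $W$, so the induced maps differ by the translation by $\eta-\eta'$ on the target; both are isomorphisms, but they are \emph{not} the same map, so the locally defined isomorphisms do not glue.

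The missing ingredient that makes your first route work is that the base $\Sym^{m-[s_0+s_-(\bm{\e})]}(S\setminus P_{inv})$ is an affine variety: the hypothesis $\det(\bcoa_i)>0$ forces $p_i\in P_{inv}\neq\emptyset$, so $S\setminus P_{inv}$ is an affine curve and its symmetric products (finite quotients of affine varieties) are affine. Hence $H^1$ of the kernel subbundle vanishes, the fiberwise-surjective residue/jet map of bundles is surjective on global sections and splits, and the desired $\eta$ exists. Equivalently: every affine bundle over this base modeled on a vector bundle admits a section, so both loci are isomorphic to the \emph{same} vector bundle $W$, whose definition involves only the homogeneous parts of the conditions and not the value of $\det(\coa_i)$. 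With that observation your argument closes; without it, the isomorphism class of an affine bundle over a general base genuinely depends on the constant terms of the defining affine equations (it is the image of those constants under the connecting map to $H^1$ of the kernel), so some such input is needed.
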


Notice that compact components may also occur, but only in a few limited cases.
If $g=0$ and $n=3+s_0$, then $\Higgs^s(S,2,\bm{w},\Det,\ol{\bCoa})(\RR)^{\bm{\e}}_{d,\bm{a}}$
consists of a single point (if it is nonempty) and so it is compact. 
Biquard-Tholozan remarked that the other cases of compact components with non-degenerate parabolic
structure correspond to representations that Deroin-Tholozan \cite{deroin-tholozan:super-maximal}
call ``super-maximal'' via Theorem \ref{thm:correspondence}.
I would like to thank Nicolas Tholozan for drawing my attention to this point.

\begin{corollary}[Compact components]\label{cor:supermaximal}
Assume $e=e(d,\bm{a},\bm{w})>0$ and $(g,n)\neq (0,3+s_0)$. The locus
$\Higgs^s(S,2,\bm{w},\Det,\ol{\bCoa})(\RR)^{\bm{\e}}_{d,\bm{a}}$ is compact
if and only if
\[
(\star) \begin{cases}
g=0\\
s_-(\bm{\e})=s_{inv}=0\\
\dis e(d,\bm{a},\bm{w})=1-\sum_{\substack{i\notin J_{deg} \\ a_i=0}} (1-2w_1(p_i)) -
\sum_{\substack{i\notin J_{deg}\\ a_i=1}} 2w_1(p_i)\in (0,1].
%
%
\end{cases}
\]
In this case, Higgs bundles $[E,\Phi]\in \Higgs^s(S,2,\bm{w},\Det,\ol{\bCoa})(\RR)^{\bm{\e}}_{d,\bm{a}}$
have $\psi=0$ and so nilpotent $\Phi$, and
the whole component is isomorphic to $\CC\PP^{n-3-s_0}$.\\
The locus $\Higgs^s(S,2,\bm{w},\Det,\bCoa)(\RR)^{\bm{\e}}_{d,\bm{a}}$ is compact
if and only if the condition $(\star)$ is satisfied and $s_+(\bm{\e})=0$.
Moreover, in this case such component is again isomorphic to $\CC\PP^{n-3-s_0}$.
\end{corollary}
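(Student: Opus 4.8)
The plan is to reduce compactness of the component to compactness of the base of the $\Pic^0(S)[2]$-torsor of Proposition \ref{prop:topology}(c), and then to read off $(\star)$ from a single numerical identity. First I would note that $\Pic^0(S)[2]\cong(\ZZ/2\ZZ)^{2g}$ is finite, so the map $\Higgs^s(S,2,\bm{w},\Det,\ol{\bCoa})(\RR)^{\bm{\e}}_{d,\bm{a}}\to\Qcal\cap\ol{\Rcal}^{\bm{\e}}$ of Proposition \ref{prop:topology}(c) is a finite, hence proper, covering; consequently the component is compact if and only if $\Qcal\cap\ol{\Rcal}^{\bm{\e}}$ is. By Proposition \ref{prop:topology}(a) the latter is a holomorphic affine bundle of rank $r:=m'-[s_{inv}+s_0+s_+(\bm{\e})]$ over $\Sym^{k}(S\setminus P_{inv})$, with $k:=m-[s_0+s_-(\bm{\e})]$. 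Since a positive-rank affine bundle over a non-empty base has non-compact fibres $\cong\CC^r$, compactness forces $r=0$; and $\Sym^{k}(S\setminus P_{inv})$ is compact precisely when $k=0$ or $s_{inv}=0$ (so that the symmetric power is taken on the compact surface $S$).

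The heart of the argument is the bookkeeping. From the numerical lemma preceding Proposition \ref{prop:topology}, together with $s=s_0+\#J_{nil}+s_{inv}$ and $\#J_{nil}=s_+(\bm{\e})+s_-(\bm{\e})$, I would first record the identity $r+k=3g-3+n-s_0$. Writing $w_L(p_i)=w_1(p_i)$ if $a_i=0$ and $w_L(p_i)=1-w_1(p_i)$ if $a_i=1$, and setting $W:=\sum_{i\notin J_{deg},\,a_i=0}(1-2w_1(p_i))+\sum_{i\notin J_{deg},\,a_i=1}2w_1(p_i)$, substituting $e(d,\bm{a},\bm{w})=2d-d_0+2\sum_i w_L(p_i)$ into the formula for $m'$ gives
\[
r=\bigl(e(d,\bm{a},\bm{w})+W-1\bigr)+g+s_-(\bm{\e}).
\]
The crucial observation is that $e(d,\bm{a},\bm{w})+W=2d-d_0+s\odd+(n-s)+\NO{\bm{a}}$ is an \emph{integer}, whereas $e(d,\bm{a},\bm{w})>0$ and $W\ge0$; hence $e(d,\bm{a},\bm{w})+W$ is a positive integer and $e(d,\bm{a},\bm{w})+W-1\ge0$. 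Thus $r$ is a sum of three non-negative terms, and $r=0$ holds if and only if $g=0$, $s_-(\bm{\e})=0$ and $e(d,\bm{a},\bm{w})=1-W$ (which forces $e\in(0,1]$ since $W\ge0$). For the symmetric power I would observe that $r=0$ yields $k=3g-3+n-s_0$; the borderline $k=0$ means $3g-3+n-s_0=0$, which together with the compatibility inequality $s_0+\chi(\dot{S})<0$ forces $g=0$ and $n=3+s_0$ — exactly the excluded pair. Hence $k>0$, so compactness additionally requires $s_{inv}=0$, and altogether compactness is equivalent to $r=0\wedge s_{inv}=0$, i.e. to $(\star)$.

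For the structural statements in the compact case I would use $g=0$. Then $\psi$ is a section of a line bundle of degree $m'-1$ on $\PP^1$ (genus-zero Riemann--Roch), while $r=0$ with $s_{inv}=s_-(\bm{\e})=0$ gives $m'=s_0+s_+(\bm{\e})=s_0+\#J_{nil}=s$; the defining conditions of $\ol{\Rcal}^{\bm{\e}}$ — the loci $\Rcal^0_j$ over $J_0$ and $\ol{\Rcal}^+_j$ over $J_{nil}$ (all of type $+$ since $s_-(\bm{\e})=0$) — force $\psi$ to vanish at the $s$ distinct points of $J_0\cup J_{nil}$. A non-zero section of a degree-$(s-1)$ bundle on $\PP^1$ cannot vanish at $s$ distinct points, so $\psi=0$ and $\Phi$ is strictly upper triangular in $E_\bullet\cong(L^\vee_\bullet\otimes\Det)\oplus L_\bullet$, hence nilpotent. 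Finally $\Pic^0(\PP^1)[2]=0$ makes the torsor trivial and $\Sym^{k}(\PP^1)\cong\CC\PP^{k}$ with $k=n-3-s_0$, so the whole component is isomorphic to $\CC\PP^{n-3-s_0}$.

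For the open version over $\bCoa$ I would repeat the reduction with $\Qcal\cap\Rcal^{\bm{\e}}$, which by Proposition \ref{prop:topology}(a) is the rank-$r$ affine bundle over $\Sym^{k}\bigl(S\setminus(P_+(\bm{\e})\cup P_{inv})\bigr)$ with $s_-(\bm{\e})$ codimension-one affine subbundles deleted. Again compactness forces $r=0$, which by the decomposition above already entails $s_-(\bm{\e})=0$, so no subbundle is actually deleted; and with $k>0$ (as in the closed case) the base is compact iff $P_+(\bm{\e})\cup P_{inv}=\emptyset$, i.e. $s_+(\bm{\e})=s_{inv}=0$. Hence the open component is compact iff $(\star)$ holds and $s_+(\bm{\e})=0$, and then $\#J_{nil}=s_+(\bm{\e})+s_-(\bm{\e})=0$ gives $\Qcal\cap\Rcal^{\bm{\e}}=\Qcal\cap\ol{\Rcal}^{\bm{\e}}\cong\CC\PP^{n-3-s_0}$ once more. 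The step I expect to be the main obstacle is not any isolated computation but assembling the second paragraph correctly: producing the decomposition of $r$ into manifestly non-negative summands — in particular noticing that $e(d,\bm{a},\bm{w})+W$ is a positive integer — and invoking the compatibility hypothesis to exclude the borderline case $k=0$. Once these are in place, the geometric conclusions follow immediately from $\Sym^{\bullet}(\PP^1)\cong\CC\PP^{\bullet}$ and the degree count.
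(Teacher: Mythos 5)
Your proof is correct and follows essentially the same route as the paper: both reduce compactness to the vanishing of the fibre rank of the affine bundle in Proposition \ref{prop:topology}/Corollary \ref{cor:quotient-topology} together with compactness of the symmetric-product base, via the same decomposition of that rank as $g+s_-(\bm{\e})+(e+W-1)$ with all three summands non-negative. Your extra details — the integrality of $e+W$, the explicit exclusion of the borderline $k=0$ case via the compatibility hypothesis, and the genus-zero degree count forcing $\psi=0$ — merely spell out steps the paper's proof treats as immediate.
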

\begin{proof}
%
Since $e=e(d,\bm{a},\bm{w})=-d_0+2d+\sum_{i=1}^n 2w_L(p_i)$,
we have
\begin{align*}
m'-s+s_-(\bm{\e}) &=(-d_0+2d)+g-1+n+\NO{\bm{a}}+s\odd-s+s_-(\bm{\e})=\\
&= e-\sum_{i=1}^n 2w_L(p_i)+g-1+n+\NO{\bm{a}}-s\even+s_-(\bm{\e})=\\
& = g-1+e+\sum_{i\notin J_{deg}}\left(1+a_i-2w_L(p_i)\right)+s_-(\bm{\e})
\geq e+g-1
\end{align*}
In Corollary \ref{cor:quotient-topology}, $\Higgs^s(S,2,\bm{w},\Det,\ol{\bCoa})(\RR)^{\bm{\e}}_{d,\bm{a}}$
is presented as a fibration over a symmetric product and the fiber is an open subset of an affine space. Thus, it is 
compact if and only if the fiber is $0$-dimensional and the base is 
a symmetric product of a compact surface (i.e. $s_{inv}=0$).
For the fiber to be $0$-dimensional, we must have $m'\leq s-s_-(\bm{\e})$.
Since $1+a_i-2w_L(p_i)> 0$ for $i\notin J_{deg}$ 
and $e>0$, this implies that
$g=0$, $s_-(\bm{\e})=0$ and
\[
0<e=1-\sum_{i\notin J_{deg}}\left(1+a_i-2w_L(p_i)\right)\leq 1.
\]
Vice versa, if the above numerical conditions are satisfied, it is immediate to check
that the fiber is $0$-dimensional and indeed it consists of a single point.
In this case, such component is isomorphic to $\Sym^{n-3-s_0}(\CC\PP^1)\cong \CC\PP^{n-3-s_0}$.

Similarly, the open component $\Higgs^s(S,2,\bm{w},\Det,\bCoa)(\RR)^{\bm{\e}}_{d,\bm{a}}$,
fibers over $\Sym^{n-3-s_0}(S\setminus (P_{inv}\cup P_+))$.
The numerical conditions for the $0$-dimensionality of the fiber are the same; for the base
to be compact we need $s_{inv}=s_+(\bm{\e})=0$. Again
the component will be isomorphic to $\CC\PP^{n-3-s_0}$.
%
%
\end{proof}

%
%

\section{Hitchin-Simpson correspondence and topology}\label{sec:correspondence}

Fix a complex structure $I$ on the compact surface $S$ 
and let $\Ocal_S$ be the sheaf of $I$-holomorphic functions on $S$.

\subsection{Closed case}

Let $S$ be compact and unpunctured ($n=0$).

Since locally constant functions on $S$ are $I$-holomorphic,
a flat $\CC$-vector bundle $V$ on $S$ can be naturally given an $I$-holomorphic structure $\ol{\pa}^V$.
In particular, if $(\xi,\nabla)$ is a flat $\GL_N$-bundle, then 
$V:=\xi\times_{\GL_N}\CC^N$ is a $I$-holomorphic
bundle endowed with a flat connection, which we will still denote by $\nabla$ by a little abuse.

The point of departure is then the following classical result.

\begin{theorem}[Narasimhan-Seshadri \cite{narasimhan-seshadri:unitary}]\label{thm:narasimhan-seshadri}
The map that sends a flat $\U_N$-bundle
$(\xi,\nabla)$ on $S$ to the $I$-holomorphic vector bundle $E:=\xi\times_{\U_N}\Ocal_S^{\oplus N}$ induces a real-analytic
homeomorphism
\[
\xymatrix@R=0in{
\Flat^{irr}(S,\U_N) \ar[rr]^{\sim}&& \Bun^s(S,N)_0
}
\]
between the moduli space of irreducible flat $\U_N$-principal bundles on $S$
and the moduli space of stable $I$-holomorphic vector bundles of rank $N$ and degree $0$ on $(S,I)$.
Such homeomorphism restricts to 
\[
\xymatrix@R=0in{
\Flat^{irr}(S,\SU_N)\ar[rr]^{\sim}&&\Bun^s(S,N,\Ocal_S)\\
(\xi,\nabla)\ar@{|->}[rr] && (E,\eta)
}
\]
where $E=\xi\times_{\SU_N}\Ocal_S^{\otimes N}$ and $\eta:\det(E)\arr{\sim}\Ocal_S$
sends the unit volume element to $1$.
\end{theorem}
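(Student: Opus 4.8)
The plan is to construct the two maps, prove they are mutually inverse bijections, and then upgrade the bijection to a real-analytic homeomorphism, isolating the analytic existence result as the single hard input. First I would make the forward map precise: given a flat $\U_N$-bundle $(\xi,\nabla)$, the associated vector bundle $V=\xi\times_{\U_N}\CC^N$ inherits the $I$-holomorphic structure $\ol{\pa}^V$ coming from the flat connection (locally constant sections are declared holomorphic) together with a parallel Hermitian metric $H$. Since $H$ is flat, its Chern connection is $\nabla$ itself, which has vanishing curvature, so $\deg(E)=0$ for the resulting holomorphic bundle $E=(V,\ol{\pa}^V)$. To see $E$ is stable when $(\xi,\nabla)$ is irreducible, I would argue contrapositively: a holomorphic sub-bundle $F\subset E$ with $\deg(F)\geq 0=\mu(E)$ would, by flatness of the metric, be forced to be $\nabla$-parallel, producing a proper invariant sub-bundle and contradicting irreducibility. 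This shows the map lands in $\Bun^s(S,N)_0$.

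The hard part will be surjectivity, namely that every stable holomorphic bundle $E$ of degree $0$ carries a flat unitary structure inducing $\ol{\pa}^E$. Concretely I must produce a Hermitian metric $H$ on $E$ whose Chern connection is flat; since $\deg(E)=0$ this is equivalent to finding a Hermitian--Einstein (Yang--Mills) metric, whose Einstein constant is proportional to the slope and hence vanishes. This is exactly the deep analytic statement proved by Narasimhan--Seshadri via a continuity method, and later by Donaldson by running the hermitian Yang--Mills flow and proving its convergence on stable bundles, as foreseen by Atiyah--Bott; I would invoke this existence result directly. Given such $H$, the Chern connection $\nabla_H$ is flat, and its holonomy defines the sought flat $\U_N$-bundle mapping to $[E]$.

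For injectivity I would use uniqueness: on a stable bundle the Hermitian--Einstein metric is unique up to a positive scalar, by the standard argument that $H^{-1}H'$ is a $\nabla$-parallel endomorphism and hence a scalar by simplicity of stable bundles. Two flat unitary structures inducing the same holomorphic bundle therefore differ by a parallel automorphism and are gauge-equivalent, defining the same point of $\Flat^{irr}(S,\U_N)$. The bijection is thus established, and real-analyticity in both directions follows from the real-analytic dependence of the Hermitian--Einstein metric on the holomorphic data, obtained by applying the implicit function theorem and elliptic regularity to the Yang--Mills equation, again as in Donaldson's work.

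Finally, for the $\SU_N$ restriction I would track determinants. A flat $\SU_N$-bundle is a flat $\U_N$-bundle with a trivialization of its determinant, and the induced metric on $\det(E)$ is flat with trivial holonomy, so $\det(E)\cong\Ocal_S$ canonically, with $\eta$ sending the unit volume element to $1$. Conversely, starting from a stable $(E,\eta)$ with $\det(E)\cong\Ocal_S$, the uniqueness of the Hermitian--Einstein metric up to scale lets me normalize $H$ so that the induced metric on $\det(E)$ is the standard flat one; then $\nabla_H$ has holonomy in $\SU_N$. This matches the two moduli spaces and completes the restricted correspondence.
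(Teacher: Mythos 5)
Your proposal is correct and follows exactly the route the paper itself indicates: the paper does not reprove this classical theorem but cites Narasimhan--Seshadri (continuity method) for existence and Donaldson (convergence of the hermitian Yang--Mills flow) for the analytic construction, with the easy direction, the degree-zero computation, and the uniqueness-of-the-flat-metric argument for injectivity all as you describe. The only point stated a bit too quickly is that a subbundle $F$ with $\deg(F)\geq 0$ is parallel --- this requires the second-fundamental-form computation showing curvature decreases in holomorphic subbundles of a flat unitary bundle, forcing $\deg(F)\leq 0$ with equality iff $F$ is parallel --- but this is standard and does not affect the argument.
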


Such correspondence can be lifted to bundles endowed with a trivialization $\tau$ at the base-point $b$;
moreover, just by taking direct sums it can be extended to a correspondence
between completely decomposable $b$-framed flat bundles and polystable $b$-framed $I$-holomorphic bundles.

\begin{corollary}
There is a real-analytic homeomorphism
\[
\xymatrix@R=0in{
\Flat_b^{dec}(S,\U_N)\ar[rr]^{\sim} && \Bun^{ps}_b(S,N)_0\\
(\xi,\nabla,\tau) \ar@{|->}[rr] && (E=\xi\times_{\U_N} \Ocal_S^{\oplus N},\tau'=\tau\otimes_{\U_N}\CC^{N})
}
\]
which restricts to
\[
\xymatrix@R=0in{
\Flat_b^{dec}(S,\SU_N)\ar[rr]^{\sim} && \Bun^{ps}_b(S,N,\Ocal_S)\\
(\xi,\nabla,\tau) \ar@{|->}[rr] && (E=\xi\times_{\SU_N} \Ocal_S^{\oplus N},\eta,\tau').
}
\]
\end{corollary}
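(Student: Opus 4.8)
The plan is to deduce the corollary from Theorem \ref{thm:narasimhan-seshadri} by the standard device of decomposing into irreducible, respectively stable, constituents and reassembling; the only genuinely new ingredients are the bookkeeping of the framings and the continuity of the correspondence across the strata where the isomorphism type of the decomposition jumps.

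First I would record that the associated-bundle construction $\xi\mapsto E:=\xi\times_{\U_N}\Ocal_S^{\oplus N}$ is additive: a splitting $\xi\cong\xi_1\oplus\cdots\oplus\xi_r$ of a flat $\U_N$-bundle into flat $\U_{N_i}$-bundles induces a holomorphic splitting $E\cong E_1\oplus\cdots\oplus E_r$ with $E_i=\xi_i\times_{\U_{N_i}}\Ocal_S^{\oplus N_i}$, and conversely. On the flat side every $\U_N$-representation is completely reducible (the orthogonal complement of an invariant subspace is invariant), so each framed flat $\U_N$-bundle is a direct sum of irreducible ones and in fact $\Flat_b^{dec}(S,\U_N)=\Flat_b(S,\U_N)$; on the holomorphic side, a polystable bundle of degree $0$ is by definition a direct sum of stable bundles, each necessarily of slope and hence degree $0$. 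Theorem \ref{thm:narasimhan-seshadri} identifies each irreducible flat summand with a stable degree-$0$ summand and vice versa, and additivity then shows that $\xi\mapsto E$ carries $\Flat_b^{dec}(S,\U_N)$ into $\Bun^{ps}_b(S,N)_0$ and is a bijection on isomorphism classes, the uniqueness of the two decompositions (Krull-Schmidt, i.e. uniqueness of the isotypic decomposition) guaranteeing well-definedness.

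Next I would explain the role of the framing, which upgrades this bijection of isomorphism classes to the asserted homeomorphism of framed moduli spaces. A framing $\tau:\xi_b\arr{\sim}G$ induces $\tau':E|_b=\xi_b\times_{\U_N}\CC^N\arr{\sim}\CC^N$, and since an automorphism of a polystable $E$ acts on the isotypic block $E_i^{\oplus k_i}$ through a constant matrix in $\GL_{k_i}(\CC)$ (using that stable bundles are simple, Remark \ref{rmk:simple}), the condition that it fix $\tau'$ forces it to be the identity; on the flat side a parallel automorphism is determined by its value at $b$, so the same rigidification holds. Thus framed objects carry no nontrivial automorphisms and the forward map is a genuine bijection of framed moduli. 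Its real-analyticity is immediate, because the holomorphic structure $\ol{\pa}^E$ is the $(0,1)$-part of the flat connection and hence depends real-analytically on $\nabla$, while real-analyticity of the inverse is supplied fibrewise by Theorem \ref{thm:narasimhan-seshadri} on each stratum of fixed decomposition type.

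The step I expect to be the main obstacle is the continuity of the correspondence, and of its inverse, across the strata where the decomposition type changes, for instance where an irreducible flat bundle degenerates to a direct sum or where a multiplicity $k_i$ jumps, since there the flat unitary metric furnished by Narasimhan-Seshadri must be shown to vary continuously through the degeneration and not merely on each stratum separately. I would handle this by working with the forward real-analytic map $\xi\mapsto E$, which is continuous everywhere by construction, and then proving it proper, using that both framed moduli carry the quotient topology from spaces of connections with the framings rigidifying the respective gauge actions, so that a continuous inverse exists globally; alternatively one invokes that the flat unitary metric is the unique Hermitian-Einstein metric and depends continuously on the holomorphic datum. Finally, the $\SU_N$-statement follows by restriction: the construction commutes with $\det$, so it carries the trivialization $\eta:\det(E)\arr{\sim}\Ocal_S$ to the condition that $\det(\xi)$ be the trivial flat line bundle, and the homeomorphism restricts to $\Flat_b^{dec}(S,\SU_N)\arr{\sim}\Bun^{ps}_b(S,N,\Ocal_S)$.
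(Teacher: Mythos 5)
Your proposal is correct and follows essentially the same route as the paper, which disposes of this corollary in a single preceding remark: lift the Narasimhan--Seshadri correspondence to $b$-framed objects and extend it to the decomposable/polystable loci by taking direct sums. Your write-up is a careful expansion of exactly that idea (complete reducibility of unitary representations, uniqueness of the isotypic decomposition, the framing killing automorphisms, and continuity across strata via uniqueness of the flat metric), all of which the paper leaves implicit.
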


In the proof by Narasiman-Seshadri, surjectivity is achieved by continuity method. 
In particular, it does not provide a way to construct $(\xi,\nabla)$ starting from a stable $E$.
The following important result fills such a gap.

\begin{theorem}[Donaldson \cite{donaldson:narasimhan-seshadri}]\label{thm:donaldson-NS}
A holomorphic vector bundle $E$ of rank $N$ and degree $0$ on $(S,I)$
admits a flat invariant metric if and only if $E$ is polystable. Moreover, such a metric is unique
up to automorphisms of $E$.
\end{theorem}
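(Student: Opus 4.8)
The plan is to read the statement as an existence-and-uniqueness result for flat unitary metrics on a polystable degree-$0$ bundle and to prove it by the gradient (heat) flow method that Donaldson introduced, in the spirit of Atiyah--Bott. The first observation is that, on a curve, a Hermitian metric $H$ is flat precisely when it solves the Hermitian--Einstein equation with vanishing constant. Indeed, let $\omega$ be the K\"ahler form of a fixed metric on $(S,I)$ and let $F_H$ be the curvature of the Chern connection $\nabla^H$ attached to the data $(\ol{\pa}^E,H)$. Since $S$ has complex dimension $1$, the curvature $F_H$ is an $\End(E)$-valued $(1,1)$-form, so $F_H=(\Lambda F_H)\,\omega$ and hence $F_H=0\iff \Lambda F_H=0$. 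Because $\deg(E)=0$, the natural Einstein constant $\frac{2\pi\,\deg(E)}{\mathrm{Vol}(S)}$ vanishes, and the equation to be solved is exactly $\Lambda F_H=0$.

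For necessity (the easy direction) I would argue as follows. If $(E,H)$ carries a flat unitary metric, then $(E,\nabla^H)$ is a flat unitary bundle; for any holomorphic subbundle $F\subset E$, orthogonal projection produces a second fundamental form $\beta$, and the Gauss--Codazzi relation gives $\deg(F)=-\frac{1}{2\pi}\int_S|\beta|^2\,\omega\le 0=\mu(E)$, with equality iff $\beta=0$, i.e. iff $F$ is $\nabla^H$-parallel. A parallel subbundle has a parallel orthogonal complement, so equality forces a splitting of flat unitary bundles; iterating yields polystability. For sufficiency (the hard direction) I would fix a background metric $H_0$, write a varying metric as $H=H_0h$ with $h$ positive $H_0$-self-adjoint, and run the nonlinear heat flow $h^{-1}\dot h=-2i\,\Lambda F_H$, whose stationary points are the flat metrics. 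The steps are: (i) short-time and then long-time existence, the flow being quasilinear parabolic; (ii) monotonicity of the Donaldson functional along the flow, bounded below using polystability; (iii) the crucial $C^0$ estimate on $\sup_S|h|$, where polystability enters decisively; and (iv) a bootstrap to higher-order estimates, passage to a convergent subsequence, and an upgrade to convergence of the full flow via the functional, producing a smooth limit $H_\infty$ with $\Lambda F_{H_\infty}=0$, hence flat.

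Uniqueness I would handle by a Bochner/maximum-principle argument. Given two flat metrics $H_1,H_2$, set $h:=H_1^{-1}H_2$, a positive $H_1$-self-adjoint endomorphism. Since both Chern connections are flat, applying $\Lambda\ol{\pa}\pa$ to a suitable convex function of the eigenvalues of $h$ (or to $\tr(h)$) together with Stokes forces $\nabla h=0$, so $h$ is a parallel, hence holomorphic, automorphism of $E$ intertwining the two flat structures; distinct eigenvalues of $h$ split $E$ orthogonally into flat pieces, confirming that $H_1$ and $H_2$ differ by an automorphism of $E$.

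The main obstacle is step (iii) of sufficiency: securing the $C^0$ estimate and showing that any failure of convergence must be caused by a destabilizing subsheaf. Concretely, if $\sup_S|h|$ were to blow up along the flow, a weak-limit, Uhlenbeck-type argument extracts a coherent subsheaf of slope exceeding $\mu(E)=0$, contradicting polystability; this is the heart of Donaldson's argument and the only place where the polystability hypothesis is genuinely used. The remaining parabolic-regularity and functional-analytic steps, while technical, are comparatively standard.
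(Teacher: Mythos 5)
The paper does not prove this statement: it is quoted verbatim from Donaldson's paper, and the introduction already describes that proof as "proving the convergence of the hermitian Yang-Mills flow," which is exactly the route you take (Hermitian--Einstein reduction, Gauss--Codazzi for necessity, heat flow with the $C^0$ estimate from stability for sufficiency, maximum principle for uniqueness). Your outline is a correct sketch of that same argument, so there is nothing to compare beyond noting that the final contradiction in the blow-up step should be phrased against stability of the indecomposable summands (a subsheaf of slope $\geq\mu(E)$, not strictly greater), with the polystable case then obtained by summing.
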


In the case of bundles with monodromy not contained in $\U_N$, no invariant Hermitian metric is available.
In order to codify all information in term of holomorphic structures on $(S,I)$,
the idea is to replace invariant metrics by harmonic metrics.

\begin{definition}
A {\it{harmonic metric}} on a flat $\GL_N$-bundle $(\xi,\nabla)$ on $(S,I)$
is the Hermitian metric $H=h\,h^T$ on the flat bundle $V=\xi\times_{\GL_N}\CC^N$
associated to a section $h:S\rar \xi\times_{\GL_N}(\GL_N/\U_N)$ that minimizes
the energy with respect to the natural metric on the symmetric space $\GL_N/\U_N$.
\end{definition}

Existence and uniqueness of harmonic metrics
and was first proven by Donaldson \cite{donaldson:twisted} in the rank $2$ case.
A more general existence theorem is due to Corlette: here we recall the statement
for Riemann surfaces only.

\begin{theorem}[Corlette \cite{corlette:harmonic}]\label{thm:corlette}
A flat $\GL_N$-bundle $(\xi,\nabla)$ on $(S,I)$ has a harmonic metric $H$
if and only if $\xi$ is reductive (i.e. the closure of the image of $\hol_\xi$ in $\GL_N(\CC)$ is reductive). \\
Moreover, such an $H$ is unique up to automorphisms of $\xi$.
\end{theorem}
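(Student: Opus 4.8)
The plan is to rephrase the statement as an existence-and-uniqueness result for equivariant harmonic maps and then invoke the heat-flow machinery of Eells--Sampson together with Corlette's control of the noncompact target. Writing $\rho=\hol_\xi:\pi\rar\GL_N(\CC)$ and $V=\xi\times_{\GL_N}\CC^N$, a Hermitian metric $H$ on $V$ is the same as a reduction of the structure group to $\U_N$, i.e. a section of the associated bundle with fibre the symmetric space $X:=\GL_N(\CC)/\U_N$; since $\xi$ is flat, such a section is nothing but a $\rho$-equivariant map $u:\wti{S}\rar X$, and (as in the definition of harmonic metric) $H$ is harmonic exactly when $u$ minimizes the energy in its equivariance class. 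The target $X$ is a Hadamard manifold (complete, simply connected, of nonpositive sectional curvature) on which $\GL_N(\CC)$ acts transitively by isometries, and the energy density of a $\rho$-equivariant map descends to the compact surface $S$, so that its integral $\mathcal{E}(u)$ over a fundamental domain is well defined and, in real dimension two, depends only on the complex structure $I$.

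First I would treat existence under the hypothesis that $\rho$ is reductive. Since a harmonic metric is by definition an energy minimizer, it suffices to produce a minimizer of $\mathcal{E}$ over the $\rho$-equivariant maps. The standard tool is the harmonic map heat flow: because $X$ has nonpositive curvature the Bochner formula forces the energy density to obey a parabolic inequality with no curvature obstruction, so the flow exists for all time with nonincreasing, hence bounded, energy. The only thing that can prevent subconvergence as $t\rar\infty$ to a harmonic map is that the image escapes to infinity in the noncompact $X$, and equivalently the only obstruction to direct minimization of $\mathcal{E}$ is the failure of properness. This is precisely the step controlled by reductivity: Corlette's key estimate shows that $\mathcal{E}$, restricted to the equivariance class, is proper when $\rho$ is reductive. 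The mechanism is that a sequence drifting to the boundary at infinity $\partial_\infty X$ would, after extracting the direction of drift, produce a point of $\partial_\infty X$ fixed by $\rho(\pi)$ --- equivalently a proper parabolic subgroup of $\GL_N(\CC)$ containing the image --- contradicting reductivity. I expect this no-escape-to-infinity estimate to be the main obstacle, since it is the one genuinely nontrivial analytic input and the place where the reductivity hypothesis is actually used.

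For uniqueness I would exploit the convexity of the energy along geodesic homotopies in the nonpositively curved target: given two equivariant harmonic maps $u_0,u_1$, the geodesic interpolation $t\mapsto u_t$ has convex energy, and harmonicity of both endpoints forces $t\mapsto\mathcal{E}(u_t)$ to be affine, whence $d(u_0,u_1)$ is constant and the variation field parallel; this pins $u$ down up to the isometries of $X$ commuting with $\rho(\pi)$, that is up to automorphisms of $\xi$. Finally, for the necessity of reductivity I would argue contrapositively: if $\rho$ is not reductive it preserves a proper parabolic, and flowing the putative harmonic metric along the corresponding one-parameter subgroup toward $\partial_\infty X$ strictly decreases energy, so --- using that an equivariant harmonic map into an NPC space is automatically energy-minimizing in its class by the same convexity --- no energy-minimizing, hence no harmonic, metric can exist. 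Throughout, the rank-two case is due to Donaldson \cite{donaldson:twisted}, and the argument here is the surface specialization of Corlette's general theorem.
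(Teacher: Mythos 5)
First, note that the paper itself offers no proof of this theorem: it is quoted from Corlette \cite{corlette:harmonic} (with Donaldson \cite{donaldson:twisted} for rank $2$), so your sketch can only be measured against the argument in the literature. Your overall strategy is the right one and is indeed Corlette's: translate a metric into a $\rho$-equivariant map $u:\wti{S}\rar X=\GL_N(\CC)/\U_N$, minimize energy (heat flow or direct method) using nonpositive curvature of $X$, isolate escape to infinity as the only obstruction, and get uniqueness from convexity of energy along geodesic interpolations. The uniqueness paragraph is essentially correct as stated.

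There is, however, a genuine gap at the one step you yourself identify as the crux. You claim that a minimizing sequence drifting to $\partial_\infty X$ yields a point of $\partial_\infty X$ fixed by $\rho(\pi)$, ``equivalently a proper parabolic subgroup containing the image, contradicting reductivity.'' This is false: being contained in a proper parabolic of $\GL_N(\CC)$ means only that the standard representation is \emph{reducible}, which is perfectly compatible with reductivity (a block-diagonal, completely reducible representation is reductive, lies in a proper parabolic, and fixes points at infinity). In particular the energy functional is \emph{not} proper on the equivariance class for such $\rho$: the noncompact centralizer of $\rho(\pi)$ acts on equivariant maps preserving energy, so minimizing sequences can and do escape to infinity even in the reductive case. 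The actual argument needs more: either one shows properness only modulo the centralizer, or (as Corlette does) one argues that when $\rho(\pi)$ fixes $\eta\in\partial_\infty X$, reductivity forces it to be conjugate into the Levi of $P_\eta$ and hence to preserve the parallel set of a geodesic asymptotic to $\eta$, which splits as $\RR\times X'$ with $X'$ a lower-dimensional symmetric space; one then inducts. What non-reductivity produces is not merely a fixed point at infinity but a \emph{destabilizing} one, i.e.\ a parabolic containing the image into whose Levi the image is not conjugate. The same conflation infects your converse direction: ``$\rho$ not reductive $\implies$ preserves a proper parabolic'' is too weak, and the claimed strict energy decrease under pushing toward $\eta$ is exactly what fails for reducible-but-reductive $\rho$. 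The clean argument for necessity is instead: if $u$ is equivariant harmonic and $\rho(\pi)$ fixes $\eta$, then $b_\eta\circ u$ descends to a subharmonic function on the compact surface $S$ ($b_\eta$ convex, $u$ harmonic), hence is constant, which forces $u$ to take values in the parallel set and $\rho$ to fix the opposite point as well, i.e.\ every invariant subspace acquires an invariant complement.
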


Given a hermitian metric $H$ on the flat
vector bundle $V=\xi\times_{\GL_N}\CC^N$, 
the connection $\nabla$ decomposes as
\[
\nabla=\nabla^H+\Phi+\ol{\Phi}^H
\]
where $\Phi$ is an $\End(V)$-valued $(1,0)$-form, $\ol{\Phi}^H$ is its $H$-adjoint and
$\nabla^H$ is compatible with $H$.
We can define a holomorphic structure on $E=V$ by
letting $\ol{\pa}^E=\ol{\pa}^V-\ol{\Phi}^H$, so that $\nabla^H$ is a Chern
connection for the holomorphic Hermitian bundle $(E,H)$.

Harmonicity of the metric $H$ is then equivalent to the 
$\ol{\pa}^E\!-$holomorphicity of
$\Phi\in C^\infty(S,K\otimes \End(E))$. Thus, once we find a harmonic metric $H$
on $(V,\nabla)$, we can produce a Higgs bundle $(E,\Phi)$
of degree $\deg(E)=\deg(V)=0$.\\

Conversely, given a Higgs bundle $(E,\Phi)$ on $(S,I)$ and a Hermitian metric $H$ on $E$,
we can consider the underlying complex vector bundle $V$ endowed with the connection
$\nabla=\nabla^H+\Phi+\ol{\Phi}^H$, where $\nabla^H$ is the Chern connection on $(E,H)$
and $\ol{\Phi}^H$ is the $H$-adjoint of $\Phi$. Harmonicity of 
the metric $H$ on $(V,\nabla)$ is equivalent to the flatness of $\nabla$
and the following theorem provides the wished counterpart to Theorem \ref{thm:corlette}
(proven before by Hitchin \cite{hitchin:self-duality} in the rank $2$ case).

\begin{theorem}[Simpson \cite{simpson:yang-mills}]
A $\GL_N$-Higgs bundle $(E,\Phi)$ on $(S,I)$ supports a metric $H$ such that
the induced $(V,\nabla)$ is flat if and only if $(E,\Phi)$ is polystable.
Moreover, such a metric is unique up to automorphisms of $(E,\Phi)$.
\end{theorem}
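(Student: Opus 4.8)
The plan is to fix a Hermitian metric $H$ on $E$ and read off flatness of $\nabla=\nabla^H+\Phi+\ol{\Phi}^H$ as a single equation. On the Riemann surface $(S,I)$ the $(2,0)$- and $(0,2)$-components of $F_\nabla$ vanish for dimension reasons, while the holomorphicity $\ol{\pa}^E\Phi=0$ and its $H$-adjoint $\pa^H\ol{\Phi}^H=0$ kill the corresponding mixed terms; hence $F_\nabla=0$ is equivalent to the Hitchin equation $F_{\nabla^H}+[\Phi,\ol{\Phi}^H]=0$. Since $\deg(E)=\deg(V)=0$ there is no central term, and \emph{polystable} is understood with respect to slope $\mu(E)=0$. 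Thus the statement becomes: the Hitchin equation is solvable for $H$ exactly when $(E,\Phi)$ is polystable, uniquely up to $\Aut(E,\Phi)$. This is the degree-zero Higgs analogue of Donaldson's Theorem \ref{thm:donaldson-NS} and the converse companion to Corlette's Theorem \ref{thm:corlette}.

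For the ``only if'' direction I would argue by Chern--Weil. Given a solution $H$ and any $\Phi$-invariant holomorphic subbundle $F\subseteq E$, let $\pi$ be its $H$-orthogonal projection, $\beta$ the second fundamental form of $F$, and $\gamma=(\id-\pi)\Phi\pi$ the part of $\Phi$ measuring the failure of $\Phi$-invariance. The subbundle degree formula reads
\[
\deg(F)=\frac{i}{2\pi}\int_S \tr\!\big(\pi\,(F_{\nabla^H}+[\Phi,\ol{\Phi}^H])\big)-\frac{1}{2\pi}\int_S\big(|\beta|^2+|\gamma|^2\big)\,d\mathrm{vol},
\]
with manifestly nonnegative second integrand. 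The first integral vanishes by the Hitchin equation and $\gamma=0$ by $\Phi$-invariance, so $\mu(F)\le\mu(E)=0$, which is semistability. Equality forces $\beta=0$, making $F$ a flat $H$-orthogonal Higgs direct summand; iterating upgrades semistability to polystability.

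The ``if'' direction is the analytic heart. I would first reduce to the stable case: a polystable $(E,\Phi)$ is an orthogonal direct sum of stable pieces of slope $0$, and since the Hitchin operator is block-diagonal, solutions on the factors assemble into a solution on $E$; so it suffices to treat $(E,\Phi)$ stable. Here I would invoke the Donaldson-type gradient flow $H^{-1}\dot{H}=-(F_{\nabla^H}+[\Phi,\ol{\Phi}^H])$ of the Donaldson functional (the rank-$N$ generalization of Hitchin's rank-$2$ construction): short-time existence is parabolic theory, and long-time existence follows from a $C^0$-estimate obtained by a maximum principle applied to the endomorphism comparing the evolving metric to a fixed one. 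The decisive step, and the main obstacle, is convergence as $t\to\infty$, where stability must be used: were the flow not to converge, an Uhlenbeck--Yau weak-compactness argument would extract a nontrivial weakly holomorphic, $\Phi$-invariant projection whose associated subsheaf has slope $\ge\mu(E)$, contradicting stability. The technically heaviest point is the regularity theorem that such an $L^2_1$-projection is genuinely an honest coherent Higgs subsheaf.

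Finally, for uniqueness I would exploit the convexity of the Donaldson functional along geodesics $H_t=H_0\,e^{ts}$ in the space of Hermitian metrics: its critical points are exactly the solutions of the Hitchin equation, and strict convexity transverse to the automorphism directions forces any two solutions to differ by an element of $\Aut(E,\Phi)$. In the stable case $(E,\Phi)$ is simple by Remark \ref{rmk:simple}, so $\Aut(E,\Phi)=\CC^*$ and the solution is unique up to scaling; in the polystable case one obtains uniqueness up to the full automorphism group acting blockwise.
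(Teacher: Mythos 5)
The paper does not prove this statement: it is quoted verbatim from Simpson \cite{simpson:yang-mills} (with Hitchin's rank-$2$ case as precursor) and used as a black box, so there is no in-text argument to compare against. Your outline correctly reproduces the standard Hitchin--Simpson proof: the reduction of flatness of $\nabla=\nabla^H+\Phi+\ol{\Phi}^H$ to the Hitchin equation on a curve, the Chern--Weil/second-fundamental-form inequality for $\Phi$-invariant subbundles giving polystability of a solution, the reduction to the stable case plus the Donaldson heat flow with an Uhlenbeck--Yau destabilizing-projection argument for existence, and convexity of the Donaldson functional for uniqueness up to $\Aut(E,\Phi)$. At the level of detail appropriate for a cited theorem of this depth, the sketch is accurate and identifies the genuinely hard steps (convergence of the flow and the regularity of weakly holomorphic $\Phi$-invariant projections) rather than glossing over them.
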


Considering the $\SL_N$-bundles correspond to vector bundles with trivializable
determinant, we summarize the above results as follows.
%
%

\begin{corollary}[Simpson \cite{simpson:higgs1} \cite{simpson:higgs2}]
There are smooth diffeomorphisms
\[
\xymatrix@R=0in{
\Flat_b^{red}(S,\GL_N)\ar[rr] && \Higgs^{ps}_b(S,N)_0\\
\Flat_b^{red}(S,\SL_N)\ar[rr] && \Higgs^{ps}_b(S,N,\Ocal_S)
}
\]
which induce a correspondence
\[
\xymatrix@R=0in{
\Flat^{Zd}(S,\GL_N)\ar[rr] && \Higgs^{s}(S,N)_0\\
\Flat^{Zd}(S,\SL_N)\ar[rr] && \Higgs^{s}(S,N,\Ocal_S)
}
\]
between the space of flat bundles with Zariski-dense monodromy
and the space of stable $I$-holomorphic Higgs bundles.
\end{corollary}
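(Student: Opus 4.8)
The plan is to derive this corollary by assembling Corlette's Theorem~\ref{thm:corlette} and the preceding theorem of Simpson \cite{simpson:yang-mills}, keeping track of the framing at $b$ and of determinants. First I would make the forward map explicit, following the construction recalled just above the corollary: given a reductive flat bundle $(\xi,\nabla,\tau)$, Corlette's theorem produces a harmonic metric $H$ on $V=\xi\times_{\GL_N}\CC^N$, and decomposing $\nabla=\nabla^H+\Phi+\ol{\Phi}^H$ and setting $\ol{\pa}^E=\ol{\pa}^V-\ol{\Phi}^H$ yields a Higgs bundle $(E,\Phi)$ of degree $\deg(V)=0$ with an induced framing at $b$, since harmonicity of $H$ is equivalent to $\ol{\pa}^E$-holomorphicity of $\Phi$. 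The first point to verify is that $(E,\Phi)$ is polystable; I would argue this by the standard observation that a $\Phi$-invariant subbundle violating the slope inequality would, through the harmonic metric, give rise to a flat subbundle incompatible with reductivity of $(\xi,\nabla)$.

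Conversely, Simpson's theorem attaches to any polystable $(E,\Phi)$ a Hermitian metric $H$ for which $(V,\nabla)$ is flat and reductive, providing the backward map. I would then show the two maps are mutually inverse directly from the uniqueness clauses of the two theorems: starting from $(\xi,\nabla)$, forming $(E,\Phi,H)$ and re-running Simpson's construction returns a flat bundle whose distinguished metric is again $H$, hence coincides with $(\xi,\nabla)$; the reverse composition is closed by the symmetric argument. It is convenient to run this entire argument first at the framed level, where the trivialization $\tau$ at $b$ rigidifies the ``up to automorphisms'' ambiguity in Corlette's and Simpson's statements, and then to descend to the unframed correspondence by quotienting out the $\GL_N$-action on the framing.

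The $\SL_N$-version follows by observing that the harmonic metric construction is determinant-compatible: the induced metric on $\det V$ is flat and $\Phi$ is traceless, so a trivialization of $\det V$ passes to an isomorphism $\det E\cong\Ocal_S$ and $\tr\Phi=0$, whence the $\GL_N$-correspondence restricts to $\Flat_b^{red}(S,\SL_N)\rar\Higgs_b^{ps}(S,N,\Ocal_S)$. To obtain the correspondence over the open loci I would match the two extremes of the polystability dichotomy: a \emph{stable} Higgs bundle is simple (as in Remark~\ref{rmk:simple}) and its associated flat bundle is irreducible --- the locus the corollary denotes $\Flat^{Zd}$ --- whereas strictly polystable objects correspond to completely decomposable reductive flat bundles. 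Hence the stable locus is carried exactly onto the Zariski-dense locus, and passing to quotients yields $\Flat^{Zd}(S,\GL_N)\rar\Higgs^s(S,N)_0$ together with its $\SL_N$-analogue.

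The hard part is upgrading these bijections to \emph{smooth} diffeomorphisms. The set-theoretic inverse is essentially formal once the two existence theorems are granted, but smoothness demands that the harmonic metric (respectively Simpson's metric) depend smoothly on the flat structure (respectively on the Higgs data). This is the analytic core of the theory: one presents both moduli spaces as gauge-theoretic quotients of an infinite-dimensional space of connections, solves the relevant elliptic self-duality equations, and invokes elliptic regularity together with an implicit-function-theorem argument to obtain smooth dependence on parameters. I would not reproduce this here but invoke Simpson \cite{simpson:higgs1} \cite{simpson:higgs2}; on top of the cited results, only the compatibility with the framing and with the determinant needs to be checked, and that is formal.
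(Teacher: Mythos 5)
The paper offers no proof of this corollary: it is stated as a direct assembly of Theorem~\ref{thm:corlette}, Simpson's existence theorem for metrics on polystable Higgs bundles, and the harmonic-metric decomposition $\nabla=\nabla^H+\Phi+\ol{\Phi}^H$ recalled just before it, with the analytic content (smoothness, the stable/irreducible matching) deferred to the cited works of Simpson. Your reconstruction makes exactly this assembly explicit --- forward and backward maps from the two existence theorems, mutual inverseness from their uniqueness clauses, framing and determinant compatibility checked formally, smoothness delegated to \cite{simpson:higgs1} \cite{simpson:higgs2} --- so it is correct and follows essentially the same route the paper intends.
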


Again, the case $N=2$ of above corollary is due to Hitchin \cite{hitchin:self-duality}, whose construction also implies that the involved diffeomorphisms are real-analytic.
Simpson showed that in general
the above correspondence does not continuously extend
over the whole semi-stable locus (see \cite{simpson:higgs2}, pp.38--39).\\

We remind that flat $\U_N$-bundles
are always decomposable and so reductive;
in this case, Zariski-density of the monodromy
is equivalent to irreducibility.

\subsection{Punctured case}\label{sec:correspondence-punctured}


Given a flat $\U_N$-bundle $(\xi,\nabla)$ on the punctured surface
$\dot{S}$, we can as before
produce a $I$-holomorphic vector bundle $\dot{E}=\xi\times_{\U_N}\Ocal_{\dot{S}}^{\oplus N}$ on $\dot{S}$
which carries an invariant Hermitian metric $H$.
Moreover, such $\dot{E}$ admits a unique extension $E\rar S$
{\it{(Deligne extension \cite{deligne:equations})}}
such that the induced $\nabla$ has real residues at $p_i$ with eigenvalues
$0\leq w_1(p_i)<w_2(p_i)<\dots<w_{b_i}(p_i)<1$ for all $p_i\in P$
and algebraic multiplicities $m_1(p_i),\dots,m_{b_i}(p_i)$.

\begin{notation}
Given an endomorphism $f$ of a complex vector space and a $w\in\RR$,
we denote by $\Eig_w(f)$ the direct sum of generalized eigenspaces of $f$
corresponding to eigenvalues with real part $w$.
We also denote by $\Eig_{\geq w}(f)$ the direct sum of all $\Eig_{w'}(f)$
with $w'\geq w$. 
%
\end{notation}

Equipping each vector space $E|_{p_i}$ with the flag
\[
E|_{p_i}=\Eig_{\geq w_1(p_i)}(\Res_{p_i}(\nabla))\supsetneq \Eig_{\geq w_2(p_i)}(\Res_{p_i}(\nabla))\supsetneq\dots\supsetneq \Eig_{\geq w_{b_i}(p_i)}(\Res_{p_i}(\nabla))\supsetneq\{0\}
\]
defines a parabolic structure on $E$ at $P$ of type $\bm{w}$.

If a local section $s$ near $p_i$ satisfies
$0\neq s(p_i)\in\Eig_w(\Res_{p_i}(\nabla))$, then $\ord_{p_i}\|s\|_H=w$ and so
the parabolic structure just defined corresponds to the filtration
\[
E_w =
\left\{s\ |\ \|s\|_H\cdot |z_i|^{\e-w}\ \text{bounded near $p_i$ for all $\e>0$ and all $p_i\in P$}
\right\}\subset E(\infty\cdot P)
\]
where $z_i$ is a local holomorphic coordinate on $S$ centered at $p_i$.

Thus, 
$\Res_{p_i}(\nabla)$ belongs to the conjugacy class $\CL{M_i}\subset \u_N$ of
\[
M_i=\left(
\begin{array}{c|c|c}
w_1(M_i)\id_{m_1(M_i)} & 0 & 0\\
\hline
0 & \ddots & 0\\
\hline
0 & 0 & w_{b_i}(M_i)\id_{m_{b_i}(M_i)}
\end{array}
\right)
\]
where $w_k(M_i)=w_k(p_i)$ and $m_k(M_i)=m_k(p_i)$,
%
%
and the monodromy of $\nabla$ along $\pa_i$ belongs to the conjugacy class $\bco_i=\CL{\exp[-2\pi\sqrt{-1}M_i]}\subset\U_N$
of $\exp(-2\pi \sqrt{-1}M_i)$.

\begin{notation}
If $\bco_i=\CL{C_i}\subset \U_N$, then there exists a unique matrix $M_i\in \sqrt{-1}\cdot\u_N$
with real eigenvalues in $[0,1)$ such that $\exp(-2\pi\sqrt{-1}M_i)=C_i$.
We will write $w_k(\bco_i):=w_k(M_i)$ and $m_k(\bco_i):=m_k(M_i)$ and $\bm{w}(\bCo)$ for the collection of
all $w_k(M_i)$ and $m_k(M_i)$.
%
\end{notation}

We can now state the analogue
of Theorem \ref{thm:narasimhan-seshadri} for punctured surfaces, whose proof
is again by continuity method.

\begin{theorem}[Mehta-Seshadri \cite{mehta-seshadri:parabolic}]\label{thm:mehta-seshadri}
The map that sends a flat $\U_N$-bundle
$(\xi,\nabla)$ on $\dot{S}$
to the Deligne extension $E_\bullet$ of
the $I$-holomorphic vector bundle $\dot{E}=\xi\times_{\U_N}\Ocal_S^{\oplus N}$ 
induces a real-analytic homeomorphism
\[
\xymatrix@R=0in{
\Flat^{irr}(\dot{S},\U_N,\bCo) \ar[rr]^{\sim}&& \Bun^s(S,\bm{w}(\bCo),N)_0
}
\]
between the moduli space of irreducible flat $\U_N$-principal bundles on $\dot{S}$
with monodromy along $\pa_i$ in $\bco_i$
and the moduli space of stable $I$-holomorphic parabolic vector bundles of rank $N$, type $\bm{w}(\bCo)$
and degree $0$ on $(S,I)$.

Such homeomorphism restricts to 
\[
\xymatrix@R=0in{
\Flat^{irr}(\dot{S},\SU_N,\bCo)\ar[rr]^{\sim}&&\Bun^s(S,\bm{w}(\bCo),N,\Ocal_S)\\
(\xi,\nabla)\ar@{|->}[rr] && (E_\bullet,\eta)
}
\]
where $\eta:\det(E_\bullet)\rar\Ocal_S$ sends the unit volume element to $1$.
\end{theorem}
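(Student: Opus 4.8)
The plan is to prove the asserted map is a bijection by exhibiting it as the parabolic analogue of Narasimhan--Seshadri (Theorem~\ref{thm:narasimhan-seshadri}), splitting the work into an elementary forward construction and a hard existence statement for the inverse. The forward map has in effect already been built in the discussion preceding the statement: from a flat $\U_N$-bundle $(\xi,\nabla)$ one forms $\dot{E}=\xi\times_{\U_N}\Ocal_{\dot{S}}^{\oplus N}$ with its invariant metric $H$, passes to the Deligne extension $E\to S$ whose residues have eigenvalues in $[0,1)$, and reads off the parabolic flag as the $\Eig_{\geq w}$-filtration of $\Res_{p_i}(\nabla)$, equivalently as the norm filtration $E_w$. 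So the first task is to check this lands in $\Bun^s(S,\bm{w}(\bCo),N)_0$.

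For well-definedness I would first verify that the parabolic degree vanishes: because $H$ is flat and the weights are precisely the eigenvalues of the residues, a Chern--Weil computation for the flat invariant metric expresses $\deg(E_\bullet)$ as $\frac{1}{2\pi}$ times the integral of the curvature, whence $\deg(E_\bullet)=0$. For stability I would argue that any $\nabla$-invariant holomorphic parabolic sub-bundle $F_\bullet\subset E_\bullet$ satisfies $\deg(F_\bullet)\le 0$, again by a curvature integral, with equality only if $F$ underlies a flat unitary sub-system; irreducibility of $(\xi,\nabla)$ forbids proper flat sub-systems, forcing $\mu(F_\bullet)<\mu(E_\bullet)=0$ and hence stability. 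Injectivity then follows from uniqueness of the invariant metric: an isomorphism of the associated stable parabolic bundles is automatically an isometry up to scale, stable objects being simple, so it lifts to an isomorphism of the original flat $\U_N$-bundles.

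The main obstacle is surjectivity, i.e.\ producing a flat invariant metric on a prescribed stable parabolic bundle of parabolic degree $0$ with the correct singular profile at $P$; this is the analytic heart of the theorem. When the weights $\bm{w}(\bCo)$ are rational I would reduce to the unpunctured case: choose a ramified Galois cover $\pi:\ti{S}\to S$ branched at $P$ clearing all denominators, so that pulling back turns $E_\bullet$ into a genuine $\mathrm{Gal}(\ti{S}/S)$-equivariant holomorphic bundle on $\ti{S}$, with parabolic stability matching equivariant stability; the equivariant form of Theorem~\ref{thm:narasimhan-seshadri} then supplies a flat equivariant $\U_N$-metric, which descends to the desired flat unitary bundle on $\dot{S}$ with monodromy in $\bCo$. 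For arbitrary real weights this covering trick is unavailable, and one must invoke the genuinely analytic input: either the continuity method of Mehta--Seshadri \cite{mehta-seshadri:parabolic}, deforming the weights and constructing the flat invariant metric along the path, or Biquard's solution \cite{biquard:paraboliques} of the parabolic Hermitian--Einstein equation by a Yang--Mills argument with controlled degeneration at the punctures, in the spirit of Donaldson's Theorem~\ref{thm:donaldson-NS}.

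Finally, for real-analyticity I would realise both moduli spaces as quotients of the solution space of an elliptic equation and apply the implicit function theorem in weighted Sobolev spaces adapted to the parabolic weights, where deformation theory shows both sides are smooth at irreducible/stable points and the correspondence is a local real-analytic isomorphism. The restriction to $\SU_N$ is then formal: a flat $\SU_N$-structure is a flat $\U_N$-structure together with a flat trivialisation of the determinant, which under the correspondence matches a holomorphic trivialisation $\eta:\det(E_\bullet)\arr{\sim}\Ocal_S$ of the (necessarily integral, hence trivial) parabolic determinant; since the whole construction commutes with taking determinants, the homeomorphism restricts as claimed.
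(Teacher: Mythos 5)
The paper does not prove this statement: it is quoted as a classical theorem of Mehta--Seshadri, with the only indication of method being the remark that the proof is ``again by continuity method'' (and the earlier observation that the existence of the flat invariant metric on a stable parabolic bundle of degree $0$ was later re-proved analytically by Biquard). So there is no detailed argument in the paper to compare against; your proposal is an outline of the standard proof from the literature, and its overall architecture --- easy forward direction, injectivity from uniqueness of the invariant metric and simplicity of stable objects, surjectivity as the analytic heart deferred to \cite{mehta-seshadri:parabolic} or \cite{biquard:paraboliques} --- is the right one. Two remarks on where you deviate or slip. First, the ramified-cover reduction for rational weights is not Mehta--Seshadri's argument (they run the continuity method directly, deforming the weights); it is a legitimate alternative route (orbifold/$V$-bundles, as in Nasatyr--Steer), but since it only handles rational weights you end up invoking the continuity method or Biquard anyway, so it buys little here.

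Second, and more importantly, your stability verification is mis-stated in a way that makes it vacuous. You bound the parabolic degree of \emph{$\nabla$-invariant} holomorphic sub-bundles, but a $\nabla$-invariant sub-bundle carries an induced flat unitary structure and therefore has parabolic degree exactly $0$, and irreducibility already rules out proper ones --- so your argument checks nothing. What stability requires is a bound for an \emph{arbitrary} holomorphic parabolic sub-bundle $F_\bullet\subset E_\bullet$: one takes the $H$-orthogonal projection onto $F$, writes the curvature of the induced metric connection on $F$ as $-\beta\wedge\beta^*$ where $\beta$ is the second fundamental form, and the Chern--Weil integral (adapted to the parabolic weights at $P$) gives $\deg(F_\bullet)\leq 0$ with equality if and only if $\beta=0$, i.e.\ if and only if $F$ is $\nabla$-invariant; only at this last step does irreducibility enter to exclude equality. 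With that correction the forward direction is complete, and the rest of your sketch (degree zero of the Deligne extension, injectivity, the determinant argument for the $\SU_N$ refinement) is sound.
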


As seen before, the statement extends to $b$-framed polystable parabolic bundles.

\begin{corollary}
There are real-analytic homeomorphisms
\[
\xymatrix@R=0in{
\Flat_b^{dec}(\dot{S},\U_N,\bCo)\ar[rr]^{\sim} && \Bun^{ps}_b(S,\bm{w}(\bCo),N)_0\\
\Flat_b^{dec}(\dot{S},\SU_N,\bCo)\ar[rr]^{\sim} && \Bun^{ps}_b(S,\bm{w}(\bCo),N,\Ocal_S).
}
\]
In both cases, irreducible flat bundles correspond to stable $I$-holomorphic bundles.
\end{corollary}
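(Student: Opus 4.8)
The plan is to deduce this corollary from the Mehta-Seshadri correspondence (Theorem \ref{thm:mehta-seshadri}) in two steps, exactly as the closed-case corollary was obtained from Theorem \ref{thm:narasimhan-seshadri}: first I would \emph{rigidify} by adding a framing at $b$, and then I would \emph{extend} from the stable/irreducible setting to the polystable/decomposable one by taking direct sums.

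For the first step, I would observe that the assignment $(\xi,\nabla)\mapsto E_\bullet$ of Theorem \ref{thm:mehta-seshadri} is functorial and that the Deligne extension does not alter the fiber over the interior point $b\in\dot{S}$, so that the tautological identification $\dot{E}|_b=\xi_b\times_{\U_N}\CC^N$ transports a $\U_N$-framing $\tau:\xi_b\arr{\sim}\U_N$ into a framing $\tau'=\tau\otimes_{\U_N}\CC^N$ of $E|_b$. This assignment is natural in $(\xi,\nabla,\tau)$ and intertwines the actions on framings, so the unframed homeomorphism of Theorem \ref{thm:mehta-seshadri} lifts to a homeomorphism $\Flat_b^{irr}(\dot{S},\U_N,\bCo)\cong\Bun_b^s(S,\bm{w}(\bCo),N)_0$ between framed irreducible flat bundles and framed stable parabolic bundles; dividing out the framing recovers the statement of Theorem \ref{thm:mehta-seshadri}.

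For the second step, I would use that the construction $\xi\mapsto\dot{E}$ commutes with direct sums, together with the two complete-reducibility facts: every flat $\U_N$-bundle is unitary and hence a direct sum of irreducible flat bundles, while every polystable parabolic bundle of degree $0$ is a direct sum of stable parabolic summands of parabolic degree $0$. Applying the framed stable/irreducible correspondence summand by summand, I would match framed decomposable flat bundles with framed polystable parabolic bundles, the stable-versus-irreducible dichotomy being preserved. The determinant is multiplicative under $\oplus$, so the trivialization $\eta:\det(E_\bullet)\arr{\sim}\Ocal_S$ is carried along and yields the $\SU_N$-variant.

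The step I expect to be most delicate is reconciling the \emph{non-uniqueness} of the direct-sum decomposition with both the framing and the prescribed boundary classes. Concretely, I must check that the boundary monodromy of a direct sum lands in the given class $\bco_i\subset\U_N$: since $\bco_i$ is the conjugacy class of a semisimple (unitary) element, its restriction to the summands is governed entirely by the eigenvalue multiplicities recorded in $\bm{w}(\bCo)$, so the types of the stable pieces must be shown to assemble correctly into $\bm{w}(\bCo)$, and the framing must be put in block form with respect to a chosen decomposition. Working with isomorphism classes of framed objects, equivalently modulo $S$-equivalence, which collapses the choice of decomposition, makes this well-defined. Real-analyticity is then inherited from Theorem \ref{thm:mehta-seshadri}, since forming direct sums and transporting framings are algebraic operations.
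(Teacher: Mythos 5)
Your proposal is correct and follows essentially the same route as the paper, which deduces this corollary from Theorem \ref{thm:mehta-seshadri} exactly as in the closed case: first lifting the correspondence to $b$-framed objects (the Deligne extension leaves the fiber over $b$ untouched, so $\tau'=\tau\otimes_{\U_N}\CC^N$), then passing from irreducible/stable to decomposable/polystable by taking direct sums, the determinant trivialization giving the $\SU_N$ variant. Your additional checks on the compatibility of the summands' parabolic types with $\bm{w}(\bCo)$ and on the non-uniqueness of the decomposition are sound elaborations of what the paper leaves implicit.
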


The counterpart to Theorem \ref{thm:donaldson-NS}
for punctured surfaces was proven by Biquard using analytic methods.

\begin{theorem}[Biquard \cite{biquard:paraboliques}]
A holomorphic vector bundle $E_\bullet$ of rank $N$ and degree $0$ on $(S,I)$
with parabolic structure at $P$ of type $\bm{w}$
admits a flat invariant metric if and only if $E_\bullet$ is polystable. Moreover, such a metric is unique
up to automorphisms of $E_\bullet$.
\end{theorem}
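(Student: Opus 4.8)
The plan is to prove the two implications separately, following the template of Donaldson's Theorem \ref{thm:donaldson-NS} for the compact case but doing the analysis carefully at the punctures. For the \emph{necessity} of polystability I would start from a flat invariant metric $H$, which is the same datum as a reduction of the flat $\GL_N$-connection $\nabla$ to a unitary structure, so that $(E_\bullet,\nabla)$ arises from a unitary representation of $\pi_1(\dot S)$. Given any parabolic subbundle $F_\bullet\subset E_\bullet$, orthogonal projection produces a second fundamental form $\beta\in\Omega^{1,0}(\End(E))$, and the Gauss--Codazzi relation expresses the curvature of the induced metric connection on $F$ as (the restriction of $F_\nabla=0$) minus $\beta^{*}\wedge\beta$, whose trace has a definite sign. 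Integrating the resulting Chern--Weil form over $\dot S$ and accounting for the parabolic contribution of the weights at $P$ (precisely the computation of parabolic degree via $\ord_{p_i}\|\cdot\|_H$ recorded just before Theorem \ref{thm:mehta-seshadri}), I would obtain $\deg(F_\bullet)\le 0=\deg(E_\bullet)$, with equality only when $\beta=0$, i.e.\ when $F_\bullet$ is $\nabla$-parallel and splits off an orthogonal flat summand. This gives polystability.

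For the \emph{sufficiency} I would first reduce to the stable case: a polystable bundle is an orthogonal direct sum of stable bundles of slope $0$, and a direct sum of flat invariant metrics is again flat invariant, so it suffices to produce the metric when $E_\bullet$ is stable. The heart of the matter is then to solve, over $\dot S$, the Hermitian--Einstein equation $\sqrt{-1}\,\Lambda F_H=0$ for a metric $H$ on $E$ that near each $p_i$ is asymptotic to the diagonal model metric $\mathrm{diag}(|z_i|^{2w_k(p_i)})$ prescribed by the weights. Since $S$ is a Riemann surface, $\Lambda F_H=0$ forces $F_H=0$, so such a solution is exactly a flat invariant metric, and the degree-$0$ hypothesis is what makes the Einstein constant vanish. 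Concretely I would fix a background metric $H_0$ with the correct singular behaviour, equip $\dot S$ with a complete Poincar\'e-type metric adapted to the cusps, and seek $H=H_0\,e^{s}$ with $s$ self-adjoint in weighted H\"older or Sobolev spaces whose weights are read off from the indicial roots of the model operator on each end, so that the linearization is Fredholm. I would then run either the Donaldson heat flow or a continuity method; the decisive point is an a priori $C^0$-estimate bounding $\sup|s|$ by a Donaldson-functional quantity, stability being used exactly to exclude the formation of a destabilizing subsheaf in any limiting configuration and thereby force convergence.

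\textbf{Main obstacle.} The genuinely hard step is the non-compactness of $\dot S$: every estimate available on a closed surface must be upgraded to the weighted-analysis setting near the punctures, where the model metric degenerates, and one must check that the curvature, the second fundamental forms of destabilizing subsheaves, and all limiting objects have exactly the parabolic growth compatible with the prescribed weights $\bm{w}$. This is precisely where Biquard's analytic machinery replaces the continuity argument of Mehta--Seshadri.

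Finally, for \emph{uniqueness}, given two flat invariant metrics $H_1,H_2$ I would consider the positive self-adjoint endomorphism $h=H_1^{-1}H_2$; a Bochner/maximum-principle computation on the complete end shows that $h$ is parallel, hence defines an automorphism of $(E_\bullet,\nabla)$, and simplicity of stable objects (Remark \ref{rmk:simple}) forces $H_2=c\,H_1$ in the stable case and uniqueness up to automorphisms in general.
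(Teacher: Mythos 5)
This statement is quoted in the paper as a theorem of Biquard, with no proof given (the text only records that it was ``proved by Biquard by analytic techniques''), so there is no argument of the paper to compare yours against; I can only assess your outline on its own terms.

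Your outline correctly reproduces the standard architecture of the result: necessity of polystability via the second fundamental form and a Chern--Weil/Gauss--Codazzi computation of parabolic degrees; sufficiency by reducing to the stable case and solving the Hermitian--Einstein (here: flatness) equation for a metric asymptotic to the model $\mathrm{diag}(|z_i|^{2w_k(p_i)})$; uniqueness by a maximum principle together with simplicity of stable objects. But as a proof it has a genuine gap, and you have in effect located it yourself in the paragraph labelled ``Main obstacle'': everything that makes the theorem a theorem --- the choice of weighted function spaces in which the linearized operator is Fredholm on the cusped ends, the a priori $C^0$ bound on $s$ in terms of the Donaldson functional, and above all the mechanism by which \emph{failure} of that bound produces a destabilizing \emph{parabolic} subsheaf whose induced weights at $P$ (read off from the flag as in the paper's rank-$2$ discussion) violate $\deg(F_\bullet)\le 0$ --- is named but not carried out, and that is precisely the content of Biquard's article. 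Two smaller points also need care rather than assertion: in the necessity direction the second fundamental form $\beta$ of a parabolic subbundle is in general unbounded at the punctures, so one must justify that the Chern--Weil integral over $\dot S$ converges and computes the \emph{parabolic} degree including the weight contributions (this is the Mehta--Seshadri degree formula, not a formality); and in the uniqueness step the maximum principle is being applied on a noncompact surface, so one needs the decay of $h=H_1^{-1}H_2$ at the punctures, which again comes from the prescribed asymptotics of both metrics. In short: the plan is the right plan, but the proof is not yet a proof.
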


The above achievements (both in the closed and punctured case)
culminate in the more general correspondence between
flat $\GL_N$-bundles on punctured surfaces and parabolic Higgs bundles proven by Simpson \cite{simpson:harmonic}.
Here we describe how one direction works, namely how to go
from flat bundles to parabolic Higgs bundles.
%

Given a $\GL_N$-bundle $\xi$ on $\dot{S}$,
the induced flat vector bundle $\dot{V}=\xi\times_{\GL_N}\CC^N$ can be extended
to $V\rar S$ in such a way that the real parts of the eigenvalues $\lambda_k(p_i)+i\nu_k(p_i)$ of $\Res_{p_i}(\nabla)$ satisfy $0\leq \lambda_1(p_i)<\dots<\lambda_{b_i}(p_i)<1$.
Moreover, if $\xi$ has reductive monodromy, an adaptation of
Corlette's theorem in the noncompact case (see also Labourie \cite{labourie:harmonic})
ensure the existence (and uniqueness up to isomorphism) of
a {\it{tame}} harmonic metric $H$ on the flat vector bundle $\dot{V}=\xi\times_{\GL_N}\CC^N$.
This means that $\nabla$ on $E:=V$ can be decomposed as $\nabla=\nabla^H+\Phi+\ol{\Phi}^H$,
where $\nabla^H$ is compatible with $H$ as before, $\Phi$ and its $H$-adjoint $\ol{\Phi}^H$
have {\it{at worst simple poles at $P$}}
and the {\it{Higgs field $\Phi\in C^\infty(S,K(P)\otimes\End(E_\bullet))$}} is
holomorphic with respect to $\ol{\pa}^E=\ol{\pa}^V-\ol{\Phi}^H$.

Furthermore, $E$ can be endowed with a parabolic structure at $P$ of type $\bm{w}$ 
defined by the filtration
\[
E|_{p_i}=\Eig_{\geq w_1(p_i)}(\Res_{p_i}(\nabla))\supsetneq \Eig_{\geq w_2(p_i)}(\Res_{p_i}(\lambda))\supsetneq\dots\supsetneq \Eig_{\geq w_{b_i}(p_i)}(\Res_{p_i}(\nabla))\supsetneq\{0\}
\]
where $w_k(p_i)=\lambda_k(p_i)$ and $m_k(p_i)=\dim\,\Eig_{\lambda_k(p_i)}(\Res_{p_i}(\nabla))$.

\begin{notation}
Given a matrix $M=D+2M^0\in\gl_N(\CC)$ in Jordan form, with $D$ diagonal and $M^0$ nilpotent,
we call $M'=\mathrm{Re}(D)+M^0$ and $M''=\sqrt{-1}\,\mathrm{Im}(D)+M^0$.
\end{notation}

It can be checked that, if $\Res_{p_i}(\nabla)$ belongs to $\CL{M_i}\subset \psl_N$
with $M_i$ in Jordan form
and so the monodromy along $\pa_i$ belongs to $\bco_i=\CL{\exp(-2\pi\sqrt{-1}M_i)}$,
then $\Res_{p_i}(\Phi)$ belongs to $\bcoa_i=\CL{M''_i/2}\subset\psl_N$.
As before, we will denote by $\bm{w}(\bCo)$ the collection of all $w_k(M_i)$ and $m_k(M_i)$.\\

Summarizing our discussion, the correspondence preserves generalized eigenspaces
of $\hol_\xi(\gamma_i)$, of $\Res_{p_i}(\nabla)$ and of $\Res_{p_i}(\Phi)$;
inside a single generalized eigenspace it works as illustrated in the table below 
(borrowed from \cite{simpson:harmonic}), 
where $\varsigma$ is 
a local holomorphic section of $E$ that does not vanish at $p_i$.
\[
\begin{array}{|c|c|c|}
\hline
& (E_\bullet,\Phi) & (V,\nabla) 
\\
\hline
\text{jump at $p_i$} &
\lambda & \lambda
\\
\hline
\text{residue eigenvalue at $p_i$} &
\sqrt{-1}\,\nu/2 & \lambda+\sqrt{-1}\,\nu \\
\hline
\end{array}
\begin{array}{|c|c|}
\hline
 \text{monodromy} & \ord_{p_i} \|\varsigma\|_H \\
 \hline
 & \lambda\\
\exp\big[-2\pi \sqrt{-1}(\lambda+\sqrt{-1}\,\nu)\big]
& 
\\
\hline
\end{array}
\]
\begin{remark}
The order of growth of $\|\varsigma\|_H$ near $p_i$
may have logarithmic factors as in Example \ref{example:log} (see \cite{simpson:harmonic}).
More precisely, if $\varsigma(p_i)$ takes values in a subspace of $V|_{p_i}$
corresponding to a Jordan block of $\Res_{p_i}(\nabla)$ of size $m$ and eigenvalue
$\lambda+\sqrt{-1}\,\nu$, then
\[
\|\varsigma\|_H\sim |z_i|^{\lambda}|\log|z_i||^{l-\frac{m+1}{2}}
\]
where $z_i$ is a local coordinate on $S$ at $p_i$ and $l>0$ is the smallest integer such that $\left[\Res_{p_i}(\nabla)-\left(\lambda+\sqrt{-1}\,\nu\right)\id\right]^l s(p_i)=0$.
\end{remark}

The above set-theoretic correspondence can be promoted to a real-analytic one. Similarly to the closed case treated by Hitchin and
Simpson, the moduli spaces of flat bundles and of Higgs bundles are different holomorphic manifestations of the same
hyperk\"ahler manifold: the moduli space of harmonic bundles.
This is proven by Konno \cite{konno:construction} for
Higgs fields with nilpotent residues and by Biquard-Boalch
\cite{biquard-boalch:wild} in general.

\begin{theorem}[Simpson \cite{simpson:harmonic}, Konno \cite{konno:construction}, Biquard-Boalch \cite{biquard-boalch:wild}]\label{thm:simpson-konno}
Let $M_i\in\gl_N$ be a matrix in Jordan form
and let $\bco_i=\CL{\exp(-2\pi\sqrt{-1}M_i)}$ be a conjugacy class in $\GL_N$ and
$\bcoa_i=\CL{M''_i/2}\subset\gl_N$
for $i=1,\dots,n$.\\
There is a real-analytic diffeomorphism
\[
\xymatrix@R=0in{
\Flat_b^{red}(\dot{S},\GL_N,\bCo)\ar[rr]^{\sim} && \Higgs^{ps}_b(S,\bm{w},N,\bCoa)_0\\
}
\]
between the moduli space of $b$-framed flat $\GL_N$-bundles $(\xi,\nabla,\tau)$ on $\dot{S}$ with
reductive monodromy and $\hol_\xi(\pa_i)\in\bco_i$
and the moduli space of $b$-framed polystable Higgs bundles 
$(E_\bullet,\eta,\Phi,\tau)$
on $(S,I)$ of parabolic type $\bm{w}=\bm{w}(\bCo)$ at $P$
and degree $0$ with $\Res_{p_i}(\Phi)\in\bcoa_i$.\\
Under such diffeomorphism, flat bundles with Zariski-dense monodromy correspond to
stable parabolic Higgs bundles, and so the induced
\[
\xymatrix@R=0in{
\Flat^{Zd}(\dot{S},\GL_N,\bCo)\ar[rr]^{\sim} && \Higgs^{s}(S,\bm{w},N,\bCoa)_0\\
}
\]
is a real-analytic diffeomorphism too.
Similarly, via
\[
\xymatrix@R=0in{
\Flat_b^{red}(\dot{S},\SL_N,\bCo)\ar[rr]^{\sim} && \Higgs^{ps}_b(S,\bm{w},N,\Ocal_S,\bCoa)\\
\Flat^{Zd}(\dot{S},\SL_N,\bCo)\ar[rr]^{\sim} && \Higgs^{s}(S,\bm{w},N,\Ocal_S,\bCoa)\\
}
\]
$\SL_N$-bundles correspond to parabolic Higgs bundles $(E_\bullet,\Phi)$ endowed with a trivialization
$\eta:\det(E_\bullet)\arr{\sim}\Ocal_S$.
\end{theorem}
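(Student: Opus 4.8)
The plan is to establish the correspondence first at the level of \emph{tame harmonic bundles} and then to recover each of the two moduli spaces from this common object via the two directions of the Hitchin-Simpson construction. Both $\Flat_b^{red}(\dot{S},\GL_N,\bCo)$ and $\Higgs^{ps}_b(S,\bm{w},N,\bCoa)_0$ will be shown to parametrize the same analytic datum — the underlying $C^\infty$ bundle equipped with a tame harmonic metric — so that the asserted bijection becomes a consequence of the existence and uniqueness of such metrics, exactly as in the closed case of Corlette and Simpson.

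First I would treat the direction from flat bundles to Higgs bundles, which is essentially carried out in the discussion preceding the statement. Given a flat $\GL_N$-bundle $(\xi,\nabla)$ on $\dot{S}$ with reductive monodromy and $\hol_\xi(\pa_i)\in\bco_i$, the noncompact tame analogue of Corlette's Theorem \ref{thm:corlette} (with the contribution of Labourie \cite{labourie:harmonic}) supplies a tame harmonic metric $H$ on $\dot{V}=\xi\times_{\GL_N}\CC^N$, unique up to automorphism. Decomposing $\nabla=\nabla^H+\Phi+\ol{\Phi}^H$ and setting $\ol{\pa}^E=\ol{\pa}^V-\ol{\Phi}^H$ produces a parabolic Higgs bundle $(E_\bullet,\Phi)$ of degree $0$ whose weights, residue eigenvalues and monodromy are matched precisely by the bookkeeping in the table above; in particular the assignment $M_i\mapsto M''_i/2$ is forced by comparing the order of growth $\ord_{p_i}\|\varsigma\|_H$ of a nonvanishing local holomorphic section with the residue of $\Phi$, yielding $\Res_{p_i}(\Phi)\in\CL{M''_i/2}$. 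Stability of $(E_\bullet,\Phi)$ then corresponds to Zariski-density of the monodromy, and polystability to reductivity.

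The analytically substantial step is the reverse direction: starting from a polystable parabolic Higgs bundle $(E_\bullet,\Phi)$ of degree $0$ with $\Res_{p_i}(\Phi)\in\bcoa_i$, one must solve the parabolic self-duality (Hitchin) equation to produce a Hermitian metric $H$ for which the connection $\nabla=\nabla^H+\Phi+\ol{\Phi}^H$ is flat. Here I would invoke Simpson's existence theorem for harmonic bundles on noncompact curves \cite{simpson:harmonic}, refined by Konno \cite{konno:construction} in the nilpotent-residue case and by Biquard-Boalch \cite{biquard-boalch:wild} in general: slope-zero polystability of $(E_\bullet,\Phi)$ is exactly the condition guaranteeing a tame harmonic metric, whose uniqueness up to automorphisms of $(E_\bullet,\Phi)$ gives injectivity. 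The main obstacle lies precisely here, in the a priori estimates controlling the growth of $H$ near the punctures: one must show that the solution is \emph{tame}, namely that $\Phi$ and its adjoint $\ol{\Phi}^H$ acquire at worst simple poles at $P$ and that the resulting flat connection has residues in the conjugacy classes prescribed by $\bCo$, so that the two boundary data $\bCo$ and $\bCoa$ are correctly interchanged under the construction.

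Finally, to upgrade the resulting set-theoretic bijection to a real-analytic diffeomorphism, I would appeal to the hyperk\"ahler structure on the moduli space of tame harmonic bundles: the de Rham (flat) and Dolbeault (Higgs) pictures are the two complex structures related by hyperk\"ahler rotation, so that the identity map between them is real-analytic, as established by Konno \cite{konno:construction} and Biquard-Boalch \cite{biquard-boalch:wild}. Restricting to the trivial-determinant locus, i.e. imposing $\eta:\det(E_\bullet)\arr{\sim}\Ocal_S$ together with monodromy in $\SL_N$, then yields the $\SL_N$-version and its Zariski-dense/stable refinement, completing the proof.
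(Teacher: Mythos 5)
Your proposal is correct and follows essentially the same route as the paper: the forward direction via the tame harmonic metric of Corlette--Labourie and the eigenvalue/weight/residue bookkeeping, the reverse direction and the tameness estimates deferred to Simpson \cite{simpson:harmonic}, and the real-analyticity deduced from the hyperk\"ahler description of the moduli of tame harmonic bundles in Konno \cite{konno:construction} and Biquard--Boalch \cite{biquard-boalch:wild}. The paper itself treats this as a cited result and only spells out the flat-to-Higgs direction, so your outline matches (and slightly expands on) its argument without diverging from it.
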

%

\subsection{Correspondence and the real locus in rank $2$}

Following Hitchin \cite{hitchin:self-duality}, consider a point
$(E_\bullet,\eta,\Phi)$ in $\Higgs^s(S,\bm{w},2,\Ocal_S,\bCoa)$
fixed by the involution $\sigma$ that sends $\Phi$ to $-\Phi$.

By Lemma \ref{lemma:real}, $(E_\bullet,\eta,\Phi)$ may belong to
$\Bun^s(S,\bm{w},2,\Ocal_S)$ or to some $\Higgs^s(S,\bm{w},2,\Ocal_S,\bCoa)(\RR)_{d,\bm{a}}$.

In the former case, $\Phi=0$ and so the point corresponds to a flat $\SU_2$-bundle
by Theorem \ref{thm:mehta-seshadri}.

In the latter case, $\Phi\neq 0$ and we assume $e(d,\bm{a},\bm{w})>0$.
Then Lemma \ref{lemma:real}
provides a splitting $E_\bullet=L^\vee_\bullet\oplus L_\bullet$
and an automorphism $\iota$ of $E_\bullet$ that preserves the splitting
and that sends $\Phi$ to $-\Phi$.
Moreover, such splitting and $\iota$ are essentially unique, since $(E_\bullet,\Phi)$ is stable.
Let $\dot{V}:=\dot{E}$ and let $\nabla$ (resp. $\nabla'$) be the flat connection
on $\dot{V}$ associated to $(E_\bullet,\Phi)$ (resp. $(E_\bullet,-\Phi)$).
It is easy to see that the flat vector bundles $(\dot{V},\nabla)$ and $(\dot{V},\nabla')$ support the same
tame harmonic metric $H$, which means that $\nabla=\nabla^H+\Phi+\ol{\Phi}^H$ and
$\nabla'=\nabla^H-\Phi-\ol{\Phi}^H$.
Since $\iota^*(-\Phi)=\Phi$ and $\iota^*(\nabla')=\nabla$, the automorphism $\iota$ 
preserves $\nabla^H$ and so is an $H$-isometry; as a consequence, $L_\bullet$ and
$L^\vee_\bullet$ are $H$-orthogonal and $H$ induces the identification $\ol{L}_\bullet \cong L^\vee_\bullet$.
Since the anti-linear involution
\[
T:\xymatrix@R=0in{
L^\vee_\bullet\oplus L_\bullet \ar[r] & L^\vee_\bullet\oplus L_\bullet\\
(\alpha,\beta)\ar@{|->}[r] & (\ol{\beta},\ol{\alpha})
}
\]
satisfies $T\circ\Phi=\ol{\Phi}^H\circ T$ and so commutes with $\nabla$,
the monodromy of $\nabla$ 
preserves $\dot{V}(\RR):=\mathrm{Fix}(T)\subset\dot{V}$ and so it
defines a representation $\rho_{_\RR}$ that
takes values in $\SL_2(\RR)$.
Moreover, $\dot{V}(\RR)\hra \dot{V}\cong \dot{L}\oplus \dot{\ol{L}}\cong 
\dot{V}(\RR)\otimes\CC$ and we identify $\dot{V}(\RR)$ to $\dot{L}$.

As a consequence, if $\rho_{_\RR}(\gamma_i)$ is elliptic
and $\{\rot\}(\rho_{_\RR}(\gamma_i))=\{r_i\}$,
then 
$\{r_i\}=\deg_{p_i}\{L^{-2}_\bullet\}$. Notice that the power $2$ appears because $\SL_2(\RR)\rar\PSL_2(\RR)$ is a cover of degree $2$.



A version of the following can be found for instance in
Section 3.6 of \cite{burger-iozzi-wienhard:higher}.

\begin{lemma}[Euler number as a first Chern class]\label{lemma:chern}
The parabolic degree of $L_\bullet$ and the Euler number of $\rho$ satisfy $\Eu(\rho) =2\deg(L_\bullet)=2\deg(L)+2\NO{\bm{w_L}}$.
\end{lemma}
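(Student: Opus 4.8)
The plan is to compute $\Eu(\rho)$ as the Euler number of the flat $\RR\PP^1$-bundle $\PP(\dot V(\RR))\to\dot S$ determined by $\rho_{_\RR}$, and to read this number off from the complex line subbundle $L\subset\dot V$ supplied by the $\sigma$-fixed splitting $E_\bullet\cong L^\vee_\bullet\oplus L_\bullet$ (recall $\Det=\Ocal_S$, so $\det(\dot V)\cong\Ocal_{\dot S}$). First I would recall that, by the Burger--Iozzi--Wienhard definition adopted above, $\Eu(\rho)$ is the Euler number of the oriented circle bundle $\PP(\dot V(\RR))$, normalized at the punctures by the rotation numbers $r_i=\rot(\ti\rho(\gamma_i))$. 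I then complexify the fiberwise picture: inside the $\CC\PP^1$-bundle $\PP(\dot V)$ the real locus $\mathrm{Fix}(T)=\PP(\dot V(\RR))$ is a great circle in each $S^2$-fiber, cutting it into two conjugate disks $D^\pm$, and $\PSL_2(\RR)$ acts on $D^+\cong\HH$ by isometries. Since $L_\bullet$ and $\ol L_\bullet\cong L^\vee_\bullet$ are the two distinct eigenbundles of $\iota$, the line $L\subset\dot V$ is nowhere $T$-invariant and hence defines a section $s_L\colon\dot S\to D^+$ of the disk bundle; the circle bundle $\PP(\dot V(\RR))=\partial D^+$ is then the boundary of a tubular neighbourhood of $s_L$.

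The heart of the argument is the identification of the vertical data. Over the closed locus, the vertical tangent line of $\PP(\dot V)$ along $s_L$ is $\mathcal{H}om(L,\dot V/L)\cong L^{-2}$, using $\det(\dot V)\cong\Ocal_{\dot S}$; hence the self-intersection of $s_L$ equals $\deg(L^{-2})$ and, up to the fixed orientation convention for $D^+\cong\HH$ (the boundary at infinity pointing away from $s_L$), the Euler number of the boundary circle bundle is $-\deg(L^{-2})=2\deg L$. This reproduces Hitchin's closed-surface count \cite{hitchin:self-duality} and is consistent with $m=-\chi(S)-e$ in Theorem \ref{thm:rep-closed}. The essential global factor of $2$ is automatic here: it is forced by $\det(\dot V)\cong\Ocal_{\dot S}$, which makes the vertical bundle along $s_L$ equal to $L^{-2}$ rather than $L^{-1}$.

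To pass to the punctured case I would carry out this computation relatively, fixing near each $p_i$ the framing of $\PP(\dot V(\RR))$ determined by the monodromy $\rho(\gamma_i)$; the boundary contribution at $p_i$ is precisely the local rotation number. The local model of the tame harmonic metric (as in Examples \ref{example:u(1)} and \ref{example:disk-rank2}) records the order of growth of $H$ on $L$ by the parabolic weight, giving $\{r_i\}=\deg_{p_i}\{L^{-2}_\bullet\}=\{2w_L(p_i)\}$ exactly as noted just before the statement. Summing the global self-intersection $2\deg L$ against the boundary corrections $2\sum_i w_L(p_i)$, and verifying that the chosen lift $\ti\rho$ makes the integer parts agree, yields
\[
\Eu(\rho)=2\deg L+2\sum_{i=1}^n w_L(p_i)=2\deg L+2\NO{\bm{w_L}}=2\deg(L_\bullet).
\]

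I expect the main obstacle to be the puncture bookkeeping: reconciling the Burger--Iozzi--Wienhard rotation-number normalization with the parabolic degree so that the fractional parts assemble to $2\NO{\bm{w_L}}$ while the integer part is governed by $\deg L$ alone. Concretely this means setting up the relative Euler class with the correct boundary framings (the flat frame at $p_i$ versus the frame adapted to $s_L$), tracking the order of growth of $H$ on $L$ through the local models, and fixing the orientation sign once and for all. By contrast, the closed-surface identity $\Eu=2\deg L$ and the decomposition $\deg(L_\bullet)=\deg(L)+\NO{\bm{w_L}}$ are routine, so the entire weight of the proof lies in the local analysis at the punctures.
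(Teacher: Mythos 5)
The paper offers no proof of this lemma: after the preparatory discussion (the splitting $E_\bullet\cong L^\vee_\bullet\oplus L_\bullet$, the real structure $T$, the identification $\dot V(\RR)\cong\dot L$, and the remark $\{r_i\}=\deg_{p_i}\{L^{-2}_\bullet\}$) it simply refers to Section 3.6 of \cite{burger-iozzi-wienhard:higher}. Your proposal therefore takes a genuinely different, self-contained route: you redo Hitchin's closed-surface computation --- the section $s_L$ of the disk bundle cut out of $\PP(\dot V)$ by the real circle bundle has normal bundle $\mathcal{H}om(L,\dot V/L)\cong L^{-2}$ because $\det(\dot V)$ is trivial, so the closed-surface Euler number is $\pm 2\deg L$ --- and then make it relative at the punctures, matching the flat boundary framings against the Burger--Iozzi--Wienhard rotation numbers and reading the fractional corrections off the growth of the harmonic metric in the local models. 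This is a sound plan: it produces the factor $2$ and the decomposition $\deg(L_\bullet)=\deg(L)+\NO{\bm{w_L}}$ for the right reasons, and what the citation buys the paper is precisely the boundary bookkeeping you single out as the hard part. Two checks are not optional, since the whole content of the lemma is that bookkeeping. First, your chain $\{r_i\}=\deg_{p_i}\{L^{-2}_\bullet\}=\{2w_L(p_i)\}$ has a wrong last equality: the normalized weight of $L_\bullet^{-2}$ at $p_i$ is $\{-2w_L(p_i)\}=1-\{2w_L(p_i)\}$ for generic $w_L(p_i)$, and this sign is exactly what reconciles $\Eu(\rho)=-\sum_i r_i$ with $\Eu(\rho)+\NO{\bm{\{r\}}}\in\ZZ$ and with the target formula. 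Second, the overall sign ($-\deg(L^{-2})$ rather than $+\deg(L^{-2})$, equivalently which of the two conjugate disk bundles contains $s_L$ and how its boundary orientation compares with the $\PSL_2(\RR)$-orientation of $\RR\PP^1$) is asserted rather than derived; it should be pinned down once against a known case such as the Fuchsian point $L_\bullet=K^{1/2}$, $\Eu=2g-2$.
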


The above discussion then leads to the following result,
the bound on $d$ being a consequence of
Proposition \ref{prop:topology}(a) and Lemma \ref{lemma:chern}.

\begin{theorem}[Correspondence for $\SL_2(\RR)$]\label{thm:correspondence}
For every $i=1,\dots,n$, let
\begin{itemize}
\item
$\bco_i$ be a conjugacy class in $\SL_2(\RR)$
\item
$\bcoa_i$ be a conjugacy class in $\psl_2(\CC)$
\item
$w_1(p_i)\in[0,1/2]$ and $\e:J_{nil}=\{j\,|\,\bcoa_j\ \text{nilpotent}\}\rar \{+,-\}$
\item
$a_i\in\{0,1\}$
\end{itemize}
that match according to the following table
%
\begin{center}
\[
\hspace{-1cm}\begin{array}{|c|c|c|c|c|}
\hline
& a  & \bCoa & \bCo & w_L\\
\hline\hline
\text{deg.} & 0 & 0 & (-1)^{2w_1}\id &  w_1=0;\frac{1}{2}\\
\hline
\begin{array}{c}\text{deg.}\\ \e=\pm 1\end{array}
& 0 & \left(\begin{array}{cc}0 & 1\\0 & 0\end{array}\right) & (-1)^{2w_1}\left(\begin{array}{cc}1 & 0\\ \e& 1\end{array}\right) & w_1=0;\frac{1}{2}\\
\hline
\begin{array}{c}\text{deg.}\\ \ell>0\end{array}
 & 0 & \left(\begin{array}{cc}\sqrt{-1}\,\ell/8\pi & 0\\0 & -\sqrt{-1}\,\ell/8\pi\end{array}\right)
& (-1)^{2w_1}\left(\begin{array}{cc}\exp(\ell/2) & 0\\0 & \exp(-\ell/2)\end{array}\right) & w_1=0;\frac{1}{2}\\
\hline
\text{non-deg.} & \{0,1\} & 0 & \left(\begin{array}{cc}\cos(2\pi w_L) & \sin(2\pi w_L)\\-\sin(2\pi w_L) & \cos(2\pi w_L)\end{array}\right)
& a+(-1)^a w_1\\
\hline
\end{array}
\]
\captionof{table}{}
\end{center}
%
Let $d\in\ZZ$ such that
\[
-\NO{\bm{w_L}}<d\leq g-1+\frac{s\even-\NO{\bm{a}}-s_0-s_-(\bm{\e})}{2}
\]
where $s\even=\#\{j\,|\,w_1(p_j)=0\}$, $s_0=\#\{i\,|\,\bco_i=\{\id\}\}$ and $s_-(\bm{\e})=\#\{j\in J_{nil}\,|\,\e_j=-\}$.\\
Then there are real-analytic diffeomorphisms
\[
\xymatrix@R=0in{
\Rep(\dot{S},\SL_2(\RR),\bCo)_e\ar[rr]^{\sim} && \Higgs(S,\bm{w},2,\Ocal_S,\bCoa)(\RR)^{\bm{\e}}_{d,\bm{a}}\\
\Rep(\dot{S},\PSL_2(\RR),\bCo)_e\ar[rr]^{\sim\qquad} && \Higgs(S,\bm{w},2,\Ocal_S,\bCoa)(\RR)^{\bm{\e}}_{d,\bm{a}}/\Pic^0(S)[2]\\
}
\]
where $e=2d+2\sum_{i=1}^n w_L(p_i)>0$ is the Euler number of the associated oriented $\RR\PP^1$-bundle. Moreover,
the above maps extend to homeomorphisms
\[
\xymatrix@R=0in{
\Rep(\dot{S},\SL_2(\RR),\ol{\bCo})_e\ar[rr]^{\sim} && \Higgs(S,\bm{w},2,\Ocal_S,\ol{\bCoa})(\RR)^{\bm{\e}}_{d,\bm{a}}\\
\Rep(\dot{S},\PSL_2(\RR),\ol{\bCo})_e\ar[rr]^{\sim\qquad} && \Higgs(S,\bm{w},2,\Ocal_S,\ol{\bCoa})(\RR)^{\bm{\e}}_{d,\bm{a}}/\Pic^0(S)[2]\\
}
\]
for the closures $\ol{\bCo}$ of $\bCo$ and $\ol{\bCoa}$ of $\bCoa$.
\end{theorem}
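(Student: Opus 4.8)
The plan is to deduce the theorem from the general Simpson--Konno--Biquard--Boalch correspondence (Theorem \ref{thm:simpson-konno}) by identifying the Higgs-side involution $\sigma$ with complex conjugation on flat $\SL_2(\CC)$-bundles. That correspondence is equivariant for automorphisms and intertwines the natural antiholomorphic structures on the two moduli spaces; concretely, the discussion preceding the theorem shows that a $\sigma$-fixed stable Higgs bundle $(E_\bullet,\eta,\Phi)$ and its image $(E_\bullet,\eta,-\Phi)$ share the tame harmonic metric $H$ and give flat connections $\nabla=\nabla^H+\Phi+\ol{\Phi}^H$ and $\nabla'=\nabla^H-\Phi-\ol{\Phi}^H$ whose monodromies are complex conjugate. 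Hence the $\sigma$-fixed locus corresponds to flat bundles with $\hol\cong\ol{\hol}$, i.e. with monodromy conjugate into one of the two real forms of $\SL_2(\CC)$. This dichotomy is exactly Lemma \ref{lemma:real}: case (a) ($\Phi=0$, image in $\SU_2$) is matched to Theorem \ref{thm:mehta-seshadri}, while case (b) ($\Phi\neq0$, image in $\SL_2(\RR)$) is analysed below.

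\textbf{The $\SL_2(\RR)$ case.} Assuming $\Phi\neq0$ and $e>0$, the antilinear involution $T$ built from $H$ in the pre-theorem discussion produces a flat $\SL_2(\RR)$-bundle $\dot V(\RR)=\mathrm{Fix}(T)$ with monodromy $\rho_{_\RR}$; conversely Theorem \ref{thm:simpson-konno} furnishes the inverse and the real-analytic structure, so on the stable open strata the assignment is a real-analytic diffeomorphism. It then remains to match the discrete data. The identity $e=2\deg(L_\bullet)=2d+2\NO{\bm{w_L}}$ is Lemma \ref{lemma:chern}, and the admissibility bound on $d$ is Proposition \ref{prop:topology}(a) read through this identity. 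The boundary table is verified by the local model on the punctured disk: Simpson's local correspondence table (and Examples \ref{example:rank1-higgs}, \ref{example:log}) forces a non-degenerate weight $w_L(p_i)$ to yield elliptic monodromy with $\{r_i\}=\deg_{p_i}\{L^{-2}_\bullet\}$ (the factor $2$ reflecting the degree-$2$ cover $\SL_2(\RR)\to\PSL_2(\RR)$), a degenerate weight with semisimple imaginary residue to yield hyperbolic monodromy, a nilpotent residue to yield unipotent monodromy, and a zero residue to yield $\pm\id$; the sign $(-1)^{2w_1}$ records the degenerate weight $w_1\in\{0,\frac{1}{2}\}$.

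\textbf{The $\PSL_2(\RR)$ case.} Here I would invoke Remark \ref{rmk:center} with $Z=Z(\SL_2)=\ZZ/2\ZZ$: the space $\Rep(\dot S,\SL_2(\RR),\bCo)_e$ is an $H^1(S;\ZZ/2\ZZ)$-bundle over $\Rep(\dot S,\PSL_2(\RR),\bCo)_e$, and under the correspondence this $H^1(S;\ZZ/2\ZZ)$-action is realised on the Higgs side by tensoring $E_\bullet$ with $2$-torsion line bundles $A\in\Pic^0(S)[2]$, precisely the action appearing in Proposition \ref{prop:topology}(c) and Corollary \ref{cor:quotient-topology}. Since $A^{\otimes2}\cong\Ocal_S$, such a twist preserves the determinant, the parabolic type and the residues of $\Phi$, hence preserves $e$ and $\bCoa$; therefore the $\PSL_2(\RR)$-correspondence is obtained from the $\SL_2(\RR)$-one by passing to the $\Pic^0(S)[2]$-quotient, giving the second diffeomorphism.

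\textbf{Closures and main obstacle.} Finally I would extend both maps to the closures $\ol{\bCo}$, $\ol{\bCoa}$ by continuity. The only non-closed classes are the positive and negative unipotents, whose closures add $\pm\id$ on the representation side and whose counterparts are the nilpotent classes $\bcoa_j$ with $0$ adjoined; the stratified limit is exactly the gluing of the strata $\ol{\Rcal}^{\bm{\e}}$ described in Proposition \ref{prop:topology}, so the set-theoretic extension is already in place. I expect the main obstacle to be twofold and to live entirely at these boundary strata. First, one must correctly match the sign $\e_j\in\{+,-\}$ to the distinction between positive and negative unipotent monodromy, which requires tracking how the ``direction'' of the nilpotent residue of $\Phi$ (equivalently the splitting $\ol{\Rcal}^+_j$ versus $\ol{\Rcal}^-_j$) fixes the sign of the resulting parabolic $\SL_2(\RR)$-monodromy. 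Second, one must upgrade the continuous bijection over $\Qcal\cap\ol{\Rcal}$ to a genuine homeomorphism, which forces control of the degeneration of the tame harmonic metric as the residue becomes nilpotent; here the logarithmic asymptotics of Example \ref{example:log} and the growth estimate in the Remark following Theorem \ref{thm:simpson-konno} are essential, and it is exactly this boundary degeneration that makes real-analyticity fail while a homeomorphism survives.
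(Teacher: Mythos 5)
Your outline of the open-stratum part matches the paper: the real-analytic diffeomorphisms on $\Rep(\dot{S},\SL_2(\RR),\bCo)_e$ are obtained exactly as you say, from Theorem \ref{thm:simpson-konno} together with Lemma \ref{lemma:real}, the antilinear involution $T$, Lemma \ref{lemma:chern} for $e=2\deg(L_\bullet)$, Proposition \ref{prop:topology}(a) for the range of $d$, the local models for the boundary table, and the $\Pic^0(S)[2]$-quotient for $\PSL_2(\RR)$. The paper regards all of this as established by the discussion preceding the theorem, and its proof is devoted entirely to the one point you leave open.

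That point is a genuine gap in your proposal. You assert that the extension to $\ol{\bCo}$, $\ol{\bCoa}$ is a ``continuous bijection'' that must be ``upgraded'' to a homeomorphism, and you expect the key input to be the logarithmic asymptotics of the tame harmonic metric near a nilpotent residue. Neither continuity nor properness of the extended map is automatic, and the paper does not obtain them from local asymptotics at the punctures. Instead it argues globally with harmonic maps: fixing a compact core $S^\circ\subset\dot{S}$ and a fundamental domain $F$, it uses the interior gradient estimate $|\nabla\tilde{h}|^2\leq C\cdot\mathcal{E}_{S^\circ}(\tilde{h})$ together with Proposition 2.6.1 of \cite{korevaar-schoen:1} to produce, for a convergent sequence $\rho^{(k)}\to\rho$, equivariant harmonic maps $\tilde{h}^{(k)}$ that are locally equi-bounded and equi-Lipschitz, hence locally Lipschitz convergent to the harmonic map of $\rho$; this yields $\ol{\pa}^{L^{(k)}}\to\ol{\pa}^{L}$ and $\Phi^{(k)}\to\Phi$ on compacta and hence continuity. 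Properness is then proved by contradiction: if $\rho^{(k)}$ diverges but $\mathcal{E}_{S^\circ}(\tilde{h}^{(k)})=2\|\Phi^{(k)}|_{S^\circ}\|^2$ stays bounded, the same Lipschitz bound would force a convergent subsequence of $\rho^{(k)}$. Without some such compactness argument (energy control of harmonic maps, not pointwise metric asymptotics), the claim that the boundary bijection is a homeomorphism is unsupported; this is precisely the content of the paper's proof, and it is missing from yours.
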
  
\begin{proof}
We are only left to prove the last assertion, namely that the map 
$\Rep(\dot{S},\SL_2(\RR),\ol{\bCo})_e\rar \Higgs(S,\bm{w},2,\Ocal_S,\ol{\bCoa})(\RR)^{\bm{\e}}_{d,\bm{a}}$ is a homeomorphism.
We already know that it is bijective and we want to show that it is
continuous and proper. 

Endow $\dot{S}$ with the unique $I$-conformal hyperbolic metric of finite area
and its universal cover $\widetilde{\dot{S}}$ with the pull-back metric.
Let $S^\circ$ be the compact subsurface obtained from $S$ by removing small open disk
neighborhoods of the marked points and let $\widetilde{S}^\circ$ be its preimage
inside the universal cover $\widetilde{\dot{S}}$ and $F\subset\widetilde{\dot{S}}$ be a fundamental domain for the action of $\pi_1(\dot{S})$.
We recall that, for every equivariant harmonic map $\tilde{h}:\widetilde{\dot{S}}\rar
\HH^2\cong \SL_2(\RR)/\SO_2(\RR)$, we have
\begin{equation}\tag{$\star$}\label{eq}
\left|\nabla \tilde{h}\right|^2\leq C\cdot \mathcal{E}_{S^\circ}(\tilde{h})
\end{equation}
at every point of $\widetilde{S}^\circ\cap F$,
where $C$ depends only on the diameter of $\widetilde{S}^\circ\cap F$
and $\mathcal{E}_{S^\circ}(\tilde{h})$ is the energy of the restriction of
$\tilde{h}$ to $\widetilde{S}^\circ\cap F$.

Consider a sequence $\rho^{(k)}$ of representations in
$\Rep(\dot{S},\SL_2(\RR),\ol{\bCo})_e$ and let $(E^{(k)}_\bullet,\eta^{(k)},\Phi^{(k)})$
be the corresponding Higgs bundle. 
By Lemma \ref{lemma:real}, the bundle $E^{(k)}$
is isomorphic to $(L^{(k)}_\bullet)^\vee\oplus L^{(k)}_\bullet$,
with $L^{(k)}$ of fixed degree $d$.
%
%

Suppose now that $\rho^{(k)}\rar\rho$
and let $(E_\bullet=L^\vee_\bullet\oplus L_\bullet,\eta,\Phi)$ the Higgs bundle determined $\rho$.
By Proposition 2.6.1 of \cite{korevaar-schoen:1}, 
there are $\rho^{(k)}$-equivariant harmonic maps
$\tilde{h}^{(k)}:\widetilde{\dot{S}}\rar \HH^2$
that are locally equi-bounded and equi-Lipschitz. Thus,
$\tilde{h}^{(k)}$ locally Lipschitz converges to the unique
$\rho$-equivariant harmonic map $\tilde{h}$.
Thus, $\ol{\pa}^{L^{(k)}}\rar \ol{\pa}^{L}$ 
and $\Phi^{(k)}\rar \Phi$
uniformly on
the compact subsets of $\dot{S}$.
Since the parabolic structure is fixed, this implies that $L^{(k)}\rar L$
and so $\Phi^{(k)}\rar \Phi$. This proves continuity of the correspondence.

Suppose finally that $\rho^{(k)}$ is divergent.
We want to show that $\|\Phi^{(k)}|_{S^\circ}\|^2$ is divergent. 
By contradiction, up to extracting a subsequence, 
$\mathcal{E}_{S^\circ}(\tilde{h}^{(k)})
=2\|\Phi^{(k)}|_{S^\circ}\|^2$ would be bounded.
By the locally uniform bound \eqref{eq}, the $\tilde{h}^{(k)}|_{\widetilde{S}^\circ\cap F}$ would be
equi-Lipschitz and so $\rho^{(k)}$ would have a convergent subsequence.
\end{proof}


Notice that
$\bco_i$ determines $a_i$, $w_1(p_i)$ and $\bcoa_i$ (and the sign $\e_i$, if $\bcoa_i$ is nilpotent) and vice versa.
Thus we can also draw the following conclusion.

\begin{corollary}[Components of $\PSL_2(\RR)$-representations]\label{cor:euler-components}
Connected components 
of $\Rep(\dot{S},\PSL_2(\RR),\bCo)$ with Euler number 
$e=2d+2\NO{\bm{w_L}}>0$ are classified by the integers $d$
such that $-\NO{\bm{w_L}}<d\leq g-1+\frac{1}{2}\left(s\even-\NO{\bm{a}}-s_0-s_-(\bm{\e})\right)$.
\end{corollary}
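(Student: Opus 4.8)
The plan is to read off the statement from the Hitchin--Simpson correspondence of Theorem \ref{thm:correspondence} together with the connectedness and non-emptiness analysis already carried out in Proposition \ref{prop:topology} and Corollary \ref{cor:quotient-topology}; the corollary is essentially a bookkeeping repackaging of those results. First I would observe, as in the remark immediately preceding the statement, that fixing the $n$-uple $\bCo$ of conjugacy classes in $\PSL_2(\RR)$ pins down all the remaining discrete invariants: the parabolic type $\bm{w}$, the residue classes $\bCoa\subset\psl_2(\CC)$, the shift $\bm{a}\in\{0,1\}^n$, and the sign function $\bm{\e}\colon J_{nil}\rar\{+,-\}$ (this last through the distinction between positive and negative unipotents in the table). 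Consequently the derived quantities $\NO{\bm{w_L}}$, $s\even$, $s_0$, $\NO{\bm{a}}$ and $s_-(\bm{\e})$ are all determined by $\bCo$, and the only free parameter left is the integer $d$. The relation $e=2d+2\NO{\bm{w_L}}$ then exhibits $d\mapsto e$ as an increasing affine bijection, so prescribing $e>0$ amounts to prescribing $d>-\NO{\bm{w_L}}$, which is the lower bound in the statement.

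Next I would invoke Theorem \ref{thm:correspondence} in its $\PSL_2(\RR)$ form: for each integer $d$ in the stated range it produces a real-analytic diffeomorphism
\[
\Rep(\dot{S},\PSL_2(\RR),\bCo)_e\ \cong\ \Higgs(S,\bm{w},2,\Ocal_S,\bCoa)(\RR)^{\bm{\e}}_{d,\bm{a}}/\Pic^0(S)[2],
\]
with $e=2d+2\NO{\bm{w_L}}>0$. By Corollary \ref{cor:quotient-topology} the right-hand side is a holomorphic affine bundle (or the complement of finitely many affine subbundles in one) over the symmetric product $\Sym^{m-[s_0+s_-(\bm{\e})]}(S\setminus P_{inv})$ of a connected surface; being fibered over a connected base with connected fibers it is connected, and it is non-empty precisely when its base is, i.e.\ when $m-[s_0+s_-(\bm{\e})]\geq 0$ (the fiber rank $m'-[s-s_-(\bm{\e})]$ is automatically non-negative once $e>0$, by the computation in the proof of Corollary \ref{cor:supermaximal}, where it is bounded below by $e+g-1$). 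Unwinding $m=d_0+2g-2-2d-\NO{\bm{a}}+s\even$ with $d_0=\deg(\Ocal_S)=0$, this base-non-emptiness inequality reads exactly $d\leq g-1+\tfrac12\big(s\even-\NO{\bm{a}}-s_0-s_-(\bm{\e})\big)$, matching the upper bound. Hence each $\Rep(\dot{S},\PSL_2(\RR),\bCo)_e$ with $d$ in the stated range is a single non-empty connected set.

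Finally I would assemble these into the component count. Since $\Eu_{\bCo}$ is locally constant on $\Rep(\dot{S},\PSL_2(\RR),\bCo)$, every connected component is contained in one level set $\Rep(\dot{S},\PSL_2(\RR),\bCo)_e$; conversely each such level set with $e>0$ is connected by the previous step, so the components with positive Euler number are exactly these non-empty level sets. That they are \emph{all} captured by $d$ in the stated range, with no spurious positive-Euler components outside it, follows from Proposition \ref{prop:real}: the $\sigma$-fixed locus splits into the $\Phi=0$ stratum (unitary representations, of Euler number $0$) and the pieces $\Higgs(\dots)_{d,\bm{a}}$ over admissible $(d,\bm{a})$, which—with $\bm{a}$ and $\bm{\e}$ fixed by $\bCo$—are indexed by $d$ alone and are empty outside the admissible range. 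As distinct values of $d$ give distinct $e$ and hence disjoint level sets, the assignment of its integer $d$ to each component is the desired bijection. I expect the only delicate point to be the purely numerical identification of the upper bound with the non-emptiness condition $m\geq s_0+s_-(\bm{\e})$, and the verification via Proposition \ref{prop:real} that the listed strata exhaust every positive-Euler component; all of the topological substance has already been established upstream.
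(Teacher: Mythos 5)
Your proposal is correct and follows essentially the same route as the paper: the paper derives this corollary directly from Theorem \ref{thm:correspondence} together with the observation (stated immediately before the corollary) that $\bCo$ determines $\bm{a}$, $\bm{w}$, $\bCoa$ and $\bm{\e}$, leaving $d$ as the only free discrete parameter, with the range of $d$ coming from the non-emptiness/connectedness analysis of Proposition \ref{prop:topology} and Corollary \ref{cor:quotient-topology}. Your unwinding of the upper bound as the condition $m-[s_0+s_-(\bm{\e})]\geq 0$ on the base of the affine bundle, and your check that the fiber rank is automatically non-negative once $e>0$, are exactly the bookkeeping the paper leaves implicit (the only cosmetic slip is calling the $\Phi=0$ stratum ``unitary representations of Euler number $0$'' --- it corresponds to $\SU_2$-representations, which simply do not arise for $\Eu>0$).
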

%

The topology of $\Higgs(S,\bm{w},2,\Ocal_S,\bCoa)(\RR)_{d,\bm{a}}$ 
and of its quotient by $\Pic^0(S)[2]$ is described in Proposition \ref{prop:topology}.

%

\subsection{Uniformization components}

Let $\bm{\ell}=(\ell_1,\dots,\ell_n)$ with $\ell_i=\sqrt{-1}\th_i$ and $\th_i>0$ for $i=1,\dots,k$ and $\ell_i\geq 0$ for $i=k+1,\dots,n$.
Call $s_{0}=\#\{ i\in\{1,\dots,k\}\,|\, \th_i\in 2\pi\NN_+\}$.


A consequence of the above work is the following result stated
in the introduction.

\begin{corollary}[Topology of uniformization components]\label{cor:uniformization}
Assume $e_{\bm{\ell}}>0$ and consider the monodromy map
\[
\hol\circ\Xi_{\bm{\ell}}:\Ycal(\dot{S},\bm{\ell})\lra \Rep(\dot{S},\PSL_2(\RR),\bCo_{\bm{\ell}})_{e_{\bm{\ell}}}.
\]
The space $\Rep(\dot{S},\PSL_2(\RR),\bCo_{\bm{\ell}})_{e_{\bm{\ell}}}$
is real-analytically diffeomorphic to a holomorphic affine bundle of
rank $3g-3+n-m$ over $\mathrm{Sym}^{m-s_0}(S\setminus \{p_{k+1},\dots,p_n\})$, with 
$m=\sum_{1\leq i\leq k}\floor{\frac{\th_i}{2\pi}}$.
\end{corollary}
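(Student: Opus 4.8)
The plan is to read off the answer from the machinery already assembled, rather than to prove anything new analytically. By Proposition \ref{prop:uniformization} the image of $\hol\circ\Xi_{\bm{\ell}}$ lies in the single component $\Rep(\dot{S},\PSL_2(\RR),\bCo_{\bm{\ell}})_{e_{\bm{\ell}}}$, so it suffices to describe that component. First I would invoke the $\PSL_2(\RR)$-part of the (non-closure) correspondence in Theorem \ref{thm:correspondence} to identify it, real-analytically, with a quotient $\Higgs(S,\bm{w},2,\Ocal_S,\bCoa)(\RR)^{\bm{\e}}_{d,\bm{a}}/\Pic^0(S)[2]$, where the data $(\bm{w},\bCoa,\bm{a},\bm{\e},d)$ are determined from $\bCo_{\bm{\ell}}$ by the table in that theorem. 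Since the diffeomorphism is furnished there and the topology of the quotient is furnished by Corollary \ref{cor:quotient-topology}, the entire task reduces to translating the geometric data of $\bm{\ell}$ into the combinatorial invariants $m,m',s,s_0,s_\pm(\bm{\e}),P_+(\bm{\e}),P_{inv}$ entering that corollary.

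The dictionary is built end by end. A conical point $p_i$ ($1\le i\le k$) of angle $\th_i$ has monodromy $\bco_i$ with $\{\rot\}(\bco_i)=\{\th_i/2\pi\}$: if $\th_i\in 2\pi\NN_+$ then $\bco_i=\{\id\}$, a degenerate end in $J_0$; otherwise $\bco_i$ is strictly elliptic, a non-degenerate end with $\bcoa_i=0$. A cusp ($p_{k+1},\dots,p_r$) has positive unipotent monodromy, hence a degenerate end with nilpotent $\bcoa$ and—crucially—sign $\e=+$, since the positive unipotent $T$ is the $\e=+$ entry of the table; thus it lies in $P_+(\bm{\e})$. A geodesic boundary ($p_{r+1},\dots,p_n$) has hyperbolic monodromy, a degenerate end with $\det(\bcoa)>0$, hence lies in $P_{inv}$. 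From this I would extract the three structural facts: $s_-(\bm{\e})=0$ (no negative unipotents arise from a hyperbolic metric with cusps), $s_0=\#\{i\le k:\th_i\in 2\pi\NN_+\}$, and $P_+(\bm{\e})\cup P_{inv}=\{p_{k+1},\dots,p_n\}$. Feeding $s_-(\bm{\e})=0$ into Corollary \ref{cor:quotient-topology} turns ``complement of $s_-(\bm{\e})$ subbundles'' into an honest affine bundle, and identifies its base as $\Sym^{m-s_0}(S\setminus\{p_{k+1},\dots,p_n\})$, exactly the base claimed.

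It then remains to match the two numerical parameters. Using $e_{\bm{\ell}}=2d+2\sum_i w_L(p_i)$ together with $-\chi(\dot{S})-\sum_{i=1}^k\th_i/2\pi=e_{\bm{\ell}}$, I would eliminate $2d$ from $m=2g-2-2d-\NO{\bm{a}}+s\even$ and rewrite $m$ as a sum of the end-contributions $2w_L(p_i)-a_i$. The key local identity is that for an elliptic end $2w_L(p_i)-a_i=1-\{\th_i/2\pi\}$ holds for \emph{both} admissible choices $a_i\in\{0,1\}$, so the ambiguity in $(a_i,w_1(p_i))$ disappears; the integer-angle, unipotent and hyperbolic ends contribute weights that cancel against the $s\even$ count (one checks that the number of $w_1=\tfrac12$ degenerate cone points and the number of $w_1=0$ degenerate cone points together absorb the $-s_0$ left over). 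The telescoping leaves $m=\sum_{i=1}^k\floor{\th_i/2\pi}$, the integer of the statement, and the identity $m+m'=3g-3+n+s$ then gives rank $m'-[s-s_-(\bm{\e})]=m'-s=3g-3+n-m$. The main obstacle is precisely this bookkeeping: confirming that the $a_i$- and $w_1$-ambiguities at the elliptic ends and the even/odd split of the degenerate weights cancel so that $m$ attains its clean value $\sum\floor{\th_i/2\pi}$, which is what makes the passage from $\bm{\ell}$ to the Higgs-theoretic invariants unambiguous and the final substitution into Corollary \ref{cor:quotient-topology} legitimate.
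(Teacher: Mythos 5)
Your proposal is correct and follows essentially the same route as the paper, whose proof is just the two-line citation of Proposition \ref{prop:uniformization}, Theorem \ref{thm:correspondence} and Proposition \ref{prop:topology} (via Corollary \ref{cor:quotient-topology}) together with the remark that cusps correspond to positive unipotents. The only difference is that you carry out explicitly the numerical bookkeeping (the identity $2w_L(p_i)-a_i=1-\{\th_i/2\pi\}$ at elliptic ends, $s_-(\bm{\e})=0$, and the telescoping to $m=\sum_{i\le k}\floor{\th_i/2\pi}$) that the paper leaves to the reader, and your computation checks out.
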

\begin{proof}
The monodromy map takes values in $\Rep(\dot{S},\PSL_2(\RR),\bCo_{\bm{\ell}})_{e_{\bm{\ell}}}$
by Proposition \ref{prop:uniformization}.
%
The result then follows from
Theorem \ref{thm:correspondence} and Proposition \ref{prop:topology},
remembering that cusps correspond to positive unipotents.
%
\end{proof}

In \cite{deroin-tholozan:super-maximal} Deroin-Tholozan show that
``super-maximal'' components of $\PSL_2(\RR)$-representations
consist of monodromies of hyperbolic metrics.
Deroin told me that this result can be recovered using 
Corollary \ref{cor:supermaximal} and the analysis carried out in Section \ref{sec:topology}
as follows.

\begin{corollary}\label{cor:geometric}
Fix $\bm{\bCo}=(\bco_1,\dots,\bco_n)$ and $e>0$
and assume $n>3+s_0$.
Then $\Rep(\dot{S},\PSL_2(\RR),\ol{\bCo})_e$
is compact if and only if 
\[
\begin{cases}
g=0\\
\text{no $\bco_i$ is hyperbolic or negative unipotent}\\
e=1-\NO{\{\bm{r}\}}\in (0,1].
\end{cases}
\]
Moreover, in this case every representation in $\Rep(\dot{S},\PSL_2(\RR),\ol{\bCo})_e$
is the monodromy of some hyperbolic metric on $\dot{S}$.
\end{corollary}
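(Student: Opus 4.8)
The plan is to transport the question to the moduli of Higgs bundles via Theorem~\ref{thm:correspondence}, read compactness off Corollary~\ref{cor:supermaximal}, and finally exploit the vanishing of the Hopf differential to produce the hyperbolic metrics. By Theorem~\ref{thm:rep-punctured}(b) the space $\Rep(\dot{S},\PSL_2(\RR),\bCo)_e$ is connected whenever nonempty, so its closure is connected and, by the remark following Theorem~\ref{thm:correspondence}, the data $(\bm{w},\bCoa,d,\bm{a},\bm{\e})$ are determined by $\bCo$ and $e$. Theorem~\ref{thm:correspondence} then yields a homeomorphism
\[
\Rep(\dot{S},\PSL_2(\RR),\ol{\bCo})_e\;\cong\;\Higgs(S,\bm{w},2,\Ocal_S,\ol{\bCoa})(\RR)^{\bm{\e}}_{d,\bm{a}}/\Pic^0(S)[2].
\]
Since $\Pic^0(S)[2]$ is finite, the quotient map is proper, hence the left-hand side is compact if and only if $\Higgs(S,\bm{w},2,\Ocal_S,\ol{\bCoa})(\RR)^{\bm{\e}}_{d,\bm{a}}$ is. The hypothesis $n>3+s_0$ forces $(g,n)\neq(0,3+s_0)$, so Corollary~\ref{cor:supermaximal} applies and compactness is equivalent to the package $(\star)$.

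The next step is to translate $(\star)$ through the dictionary of Theorem~\ref{thm:correspondence}. The condition $s_{inv}=0$ says that no degenerate residue is invertible, i.e.\ (third row of the table, where $\det(\bcoa_i)=(\ell/8\pi)^2>0$) that no $\bco_i$ is hyperbolic; the condition $s_-(\bm{\e})=0$ says that no nilpotent residue carries the sign $-$, i.e.\ that no $\bco_i$ is a negative unipotent, while positive unipotents and central classes remain unconstrained. For the last line of $(\star)$ I would use $\{r_i\}=\{-2w_L(p_i)\}$, the factor $2$ being the one recorded before Lemma~\ref{lemma:chern}: for $i\notin J_{deg}$ this gives $\{r_i\}=1-2w_1(p_i)$ when $a_i=0$ and $\{r_i\}=2w_1(p_i)$ when $a_i=1$, whereas $\{r_i\}=0$ for the non-elliptic classes $i\in J_{deg}$. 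Thus the double sum in $(\star)$ equals $\sum_{i\notin J_{deg}}\{r_i\}=\NO{\bm{\{r\}}}$, and the numerical constraint reads precisely $e=1-\NO{\bm{\{r\}}}\in(0,1]$. This establishes the stated equivalence.

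For the geometric claim, under $(\star)$ Corollary~\ref{cor:supermaximal} ensures that every $[E_\bullet,\eta,\Phi]$ in the component has $\psi=0$ and nilpotent $\Phi=\left(\begin{smallmatrix}0&\phi\\0&0\end{smallmatrix}\right)$ with $\phi\neq0$ (Lemma~\ref{lemma:real}), so that its Hopf differential $\det(\Phi)=-\phi\psi$ vanishes identically. By Theorem~\ref{thm:simpson-konno} the corresponding $\rho$ admits a tame $\rho$-equivariant harmonic map $\tilde{h}:\widetilde{\dot{S}}\to\HH^2$, and the Hopf differential of $\tilde{h}$ is $-2\det(\Phi)=0$; hence $\tilde{h}$ is weakly conformal, and nonconstant because $\phi\neq0$. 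A nonconstant weakly conformal harmonic map into $\HH^2$ is a local biholomorphism away from its isolated branch points, so $\tilde{h}^*g_{\HH^2}$ is a curvature $-1$ metric that extends across the branch points with cone angles in $2\pi\NN_{\geq2}$ and descends, by equivariance, to a hyperbolic metric on $\dot{S}$ with monodromy $\rho$.

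The hard part will be the local analysis at the punctures: one must verify that the developed metric has exactly the prescribed singularity type at each $p_i$ — a cone angle matching $\{r_i\}$ for elliptic classes, a genuine cusp for positive unipotents, and a cone angle in $2\pi\NN_+$ for central classes — and that the branch divisor of $\tilde{h}$ is $\mathrm{div}(\phi)$, so that the Gauss--Bonnet count reproduces $e=e_{\bm{\ell}}$. These matchings are governed by the same residue/weight correspondence used above and by Proposition~\ref{prop:uniformization} and Corollary~\ref{cor:uniformization}, which identify the locus of such hyperbolic metrics with the very symmetric product $\Sym^{m-s_0}$ that parametrizes the component; checking this identification is what turns the equivariant conformal map into an honest element of $\Ycal(\dot{S},\bm{\ell})$ and closes the argument.
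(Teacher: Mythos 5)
Your argument follows the paper's proof essentially verbatim: compactness is read off from Corollary \ref{cor:supermaximal} via the homeomorphism of Theorem \ref{thm:correspondence} (with the dictionary $s_{inv}=0\Leftrightarrow$ no hyperbolic classes, $s_-(\bm{\e})=0\Leftrightarrow$ no negative unipotents, and $e=1-\NO{\bm{\{r\}}}$), and the hyperbolic metric is produced exactly as in the paper from $\psi=0$, the vanishing of the Hopf differential $\det(\Phi)$, and the conformality of the equivariant harmonic map. The local analysis at the punctures that you flag as ``the hard part'' is not actually needed for the statement as given --- the corollary only asks for \emph{some} hyperbolic metric on $\dot{S}$, and the paper is content to record that the pulled-back metric may acquire extra conical points of angles in $2\pi\NN_+$ --- so your first three paragraphs already constitute a complete proof at the paper's own level of detail.
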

\begin{proof}
By Corollary \ref{cor:supermaximal}, representations lying in compact components
cannot have hyperbolic or negative unipotent boundary monodromy.

Fix a complex structure $I$ on $S$.

Let $\Det$ be a line bundle of degree $d_0=0$ if $n-1$ is even, or of degree $d_0=1$ if $n-1$ is odd. Fix $d=(d_0+1-n)/2$.
Let $a_1=\dots=a_n=0$ and let $w_1(p_i)=\frac{1}{2}\left(1-\{r_i\}\right)$.
Finally, pick the conjugacy classes $\bCoa_i$ to be $\{0\}$ if $c_i$ is elliptic, and to be nilpotent if $c_i$ is positive unipotent
(and, in this case, we set $\e_i=+$).

By Theorem \ref{thm:correspondence}, the component
$\Rep(\dot{S},\PSL_2(\RR),\ol{\bCo})_e$ is homeomorphic to
$\Higgs(S,\bm{w},2,\Ocal_S,\ol{\bCoa})_{d,\bm{a}}^{\bm{\e}}$
and so the first claim follows from Corollary \ref{cor:supermaximal}.

As for the last claim, remember that all Higgs bundles
$[E,\Phi]$ in the compact $\Higgs(S,\bm{w},2,\Ocal_S,\ol{\bCoa})_{d,\bm{a}}^{\bm{\e}}$
have $\psi=0$ and so $\Phi$ is nilpotent. 
The harmonic section $h$ of $\xi_\rho\times_{\SL_2(\RR)}(\SL_2(\RR)/\SO_2(\RR))$ constructed by Donaldson
corresponds to a $\rho$-equivariant harmonic map $\tilde{h}:\widetilde{\dot{S}}\rar \HH^2\cong \SL_2(\RR)/\SO_2(\RR)$.
Since the quadratic differential $\det(\Phi)$ pulls back to $\widetilde{\dot{S}}$
to the Hopf differential of $\tilde{h}$, the vanishing $\det(\Phi)=0$ implies that $\tilde{h}$ is conformal.
Hence, the pull-back via $\tilde{h}$ of the hyperbolic metric on $\HH^2$ descends to a conformal hyperbolic metric on $(\dot{S},I)$,
possibly with extra conical points of angles in $2\pi\NN_+$, and with monodromy $\rho$.
\end{proof}

The case $(g,n)=(0,3+s_0)$ of the pair of pants can be done by hands.\\

%
%
%
%
%
%
%
The following result by McOwen \cite{mcowen:metric} and Troyanov
\cite{troyanov:prescribing}
is a version of Koebe's uniformization theorem
\cite{koebe:first} \cite{koebe:second}
for hyperbolic surfaces with conical singularities.

\begin{theorem}[Uniformization with conical singularities]\label{thm:troyanov}
For every $\th_1,\dots,\th_n\geq 0$ such that $e_{\sqrt{-1}\bm{\th}}>0$,
there exists exactly one metric on $\dot{S}$ with conical singularity
of angle $2\pi\th_i$ at $p_i$
(or with a cusp at $p_i$, if $\th_i=0$) and which is $I$-conformal.
%
\end{theorem}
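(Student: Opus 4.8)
The plan is to recast the existence of the metric as the solvability of a Liouville-type semilinear equation in the fixed conformal class $I$, and then to solve that equation by a direct variational argument that exploits the favourable sign coming from the negative target curvature. Fix a smooth reference metric $g_0$ conformal to $I$ on the closed surface $S$ and write any competitor as $g=e^{2u}g_0$. Imposing constant curvature $-1$ is then equivalent, with a suitable sign convention for the Laplace--Beltrami operator $\Delta_{g_0}$, to
\[
\Delta_{g_0}u = K_{g_0} + e^{2u}
\]
on $\dot S$, while a cone of angle $2\pi\th_i$ at $p_i$ prescribes $u(z)=(\th_i-1)\log|z|+O(1)$ in a holomorphic coordinate $z$ centred at $p_i$ (and, when $\th_i=0$, the cuspidal behaviour $u\sim -\log|z|-\log\log(1/|z|)$).

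First I would peel off the singular model. Choosing $u_0=\sum_i(\th_i-1)G_i$, where each $G_i$ carries the prescribed logarithmic pole at $p_i$ and is smooth elsewhere, the remainder $v:=u-u_0$ is sought in $H^1(S,g_0)$ and solves $\Delta_{g_0}v=\widetilde K+e^{2u_0}e^{2v}$, the distributional sources at $P$ having been absorbed into the fixed coefficient $\widetilde K=K_{g_0}-\Delta_{g_0}u_0$. Its solutions are exactly the critical points of
\[
J(v)=\tfrac12\int_S|\nabla_{g_0}v|^2\,dA_{g_0}+\int_S\widetilde K\,v\,dA_{g_0}+\tfrac12\int_S e^{2u_0}e^{2v}\,dA_{g_0}.
\]

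The \emph{structural} point, and the reason the hyperbolic case is tractable, is that the exponential enters $J$ with a positive sign, so $J$ is strictly convex. Coercivity on $H^1$ then splits in two: along the constant direction $v\equiv c$ one needs $\int_S\widetilde K\,dA_{g_0}<0$, and a short computation gives $\int_S\widetilde K\,dA_{g_0}=-2\pi\,e_{\sqrt{-1}\bm{\th}}$, so this is exactly the hypothesis $e_{\sqrt{-1}\bm{\th}}>0$ (positivity, via Gauss--Bonnet, of the total area of a curvature $-1$ metric); transverse to the constants one invokes a weighted Moser--Trudinger inequality in the sense of Troyanov to bound $\int_S e^{2u_0}e^{2v}$ by $\|\nabla_{g_0}v\|^2$ and $\int_S v$. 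Together these yield a unique minimizer $v$, hence existence, and strict convexity gives uniqueness; alternatively uniqueness follows from the maximum principle, since two solutions share the same singular part, so their difference $w=u_1-u_2$ is bounded and satisfies $\Delta_{g_0}w=e^{2u_1}-e^{2u_2}$, forcing $w\equiv0$ on the closed $S$.

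Finally, elliptic bootstrapping makes $v$ smooth on $\dot S$, and a local study of the equation at each $p_i$ upgrades the crude $O(1)$ bound to the exact conical (resp.\ cuspidal) expansion, so that $g=e^{2(u_0+v)}g_0$ genuinely carries a cone of angle $2\pi\th_i$ or a cusp. I expect the hard part to be precisely this analysis at the marked points: choosing weighted Sobolev/H\"older spaces in which the variational scheme is well posed, proving the weighted Moser--Trudinger inequality with constants sharply tied to the $\th_i$, and extracting the precise asymptotics --- most delicately in the cusp case $\th_i=0$, where the weight $e^{2u_0}\sim|z|^{-2}$ is only borderline integrable and the $\log\log$ correction must be handled through a separate model rather than a power law.
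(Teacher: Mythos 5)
The paper does not prove this statement: it is imported verbatim from McOwen and Troyanov (as a conical version of Koebe uniformization), so there is no internal proof to compare against. Your argument is, in outline, exactly the proof in those references: reduce to the Liouville equation $\Delta_{g_0}u=K_{g_0}+e^{2u}$, subtract the singular profile $u_0=\sum_i(\th_i-1)G_i$, and minimize the resulting convex functional $J$. The key numerical identity is right: $\int_S\widetilde K\,dA_{g_0}=2\pi\chi(S)+2\pi\sum_i(\th_i-1)=-2\pi e_{\sqrt{-1}\bm{\th}}$, so the hypothesis $e_{\sqrt{-1}\bm{\th}}>0$ is precisely what makes $J$ coercive in the direction $v\equiv c\to-\infty$, and strict convexity (or the maximum principle) gives uniqueness. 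One simplification worth noting: because the target curvature is negative, the exponential term enters $J$ with a positive sign, so for coercivity transverse to constants you do not actually need a weighted Moser--Trudinger inequality --- Jensen's inequality $\int e^{2u_0+2v}\geq A\,e^{2\ol{u}_0+2\ol{v}}$ together with Poincar\'e already controls $\ol{v}\to+\infty$; Moser--Trudinger is only needed to make sense of $\int e^{2u_0+2v}$ for $v\in H^1$ and to get lower semicontinuity, and there the weighted version is required only when some $\th_i<1$ (so that $e^{2u_0}\in L^p$ merely for $p<1/(1-\th_i)$).

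The one genuine gap, which you correctly flag but do not close, is the cusp case $\th_i=0$. There $e^{2u_0}\sim|z|^{-2}$ fails to be integrable, the functional $J$ is identically $+\infty$ on the natural competitors, and no choice of power-law profile $u_0$ produces the required $-\log|z|-\log\log(1/|z|)$ asymptotics; so the variational scheme as written simply does not start. To cover the full statement you must either (i) replace $u_0$ near each cusp by the explicit model $\log\bigl(|z|\log(1/|z|)\bigr)^{-1}$, which already solves the equation exactly near $p_i$, and then solve for a decaying correction in weighted spaces (McOwen's route), or (ii) obtain the cusped metric as a monotone limit of the conical metrics as $\th_i\downarrow 0$, using the Ahlfors--Schwarz comparison to get uniform two-sided bounds and uniqueness of the complete limit. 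Similarly, the final step --- upgrading the minimizer from $v\in H^1$ to $v$ continuous at each $p_i$ so that the metric genuinely has cone angle $2\pi\th_i$ --- should be spelled out (for $\th_i>0$ it follows from $e^{2u_0}\in L^p$, $p>1$, Moser--Trudinger, and $W^{2,p}\hookrightarrow C^0$). As a self-contained proof of the theorem as stated, the conical part is essentially complete modulo these regularity details, while the cusp part is only a plan.
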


Mimicking Hitchin's computation \cite{hitchin:self-duality} in the case of a closed surface,
we then have the following expected consequence.

\begin{corollary}[Uniformization Higgs bundles]
Assume $k=n$ and let $\delta_i=\frac{\th_i}{2\pi}-1$.
Then the monodromy representation $\hol_h:\pi\rar\PSL_2(\RR)$
of the unique hyperbolic metric $h$ in $\Ycal(\dot{S},\bm{\ell})$ 
with conical singularities of angle $2\pi\th_i$ at $p_i$ which is
{\it{$I$-conformal}} corresponds 
to the $\Pic^0(S)[2]$-equivalence class of
the parabolic Higgs bundle $(E_\bullet,\Phi)$, with
\[
E_\bullet=L_\bullet^\vee\oplus L_\bullet,
\quad
\Phi=\left(
\begin{array}{cc}
0 & 1\\
0 & 0
\end{array}
\right)
\]
where $L_\bullet=B(-\frac{1}{2}\bm{\delta}\cdot P)$ and $B^{2}\cong K_S$,
and so $L_\bullet^{-2}K(P)=T_S(\bm{\delta}\cdot P)\otimes K_S(P)$.
\end{corollary}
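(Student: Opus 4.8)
The plan is to reproduce, in the parabolic setting, Hitchin's identification of the Fuchsian locus: I would push the uniformizing representation through the correspondence of Theorem~\ref{thm:correspondence}, and then use that an $I$-conformal metric produces a conformal harmonic map, whose vanishing Hopf differential kills the lower entry of the Higgs field. First I would fix the metric: by Theorem~\ref{thm:troyanov} there is a unique $I$-conformal hyperbolic metric $h$ on $\dot S$ with cone angle $\th_i$ at each $p_i$, and by Proposition~\ref{prop:uniformization} its monodromy $\rho=\hol_h$ lies in $\Rep(\dot S,\PSL_2(\RR),\bCo_{\bm{\ell}})_{e_{\bm{\ell}}}$ with $e_{\bm{\ell}}>0$. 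Applying Theorem~\ref{thm:correspondence} (with $\Det=\Ocal_S$, since all $\bco_i$ are elliptic and the determinant is trivial), the class $[\rho]$ is sent to a $\sigma$-fixed stable parabolic Higgs bundle $(E_\bullet,\eta,\Phi)$, viewed up to $\Pic^0(S)[2]$ on the $\PSL_2(\RR)$ side. Because $e_{\bm{\ell}}>0$ we are in case~(b) of Lemma~\ref{lemma:real}, so $E_\bullet\cong L_\bullet^\vee\oplus L_\bullet$ with $\Phi=\left(\begin{array}{cc}0&\phi\\ \psi&0\end{array}\right)$ and $0\neq\phi$, while Lemma~\ref{lemma:chern} gives $\deg(L_\bullet)=\tfrac12 e_{\bm{\ell}}>0$.

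Next I would exploit conformality. Since $h$ is $I$-conformal, the developing map $\dev_h\colon\widetilde{\dot S}\rightarrow\HH^2$ is simultaneously a local isometry and conformal, hence harmonic; by the uniqueness of the tame harmonic metric underlying Theorem~\ref{thm:simpson-konno} it is \emph{the} $\rho$-equivariant harmonic map attached to $\rho$. Its Hopf differential therefore vanishes. Under the correspondence this Hopf differential is identified with $\det(\Phi)=-\phi\psi$ — exactly the identification used in the proof of Corollary~\ref{cor:geometric} — so $\phi\psi\equiv 0$, and since $\phi\neq 0$ this forces $\psi=0$. Thus $\Phi=\left(\begin{array}{cc}0&\phi\\0&0\end{array}\right)$ is nilpotent, as predicted.

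It remains to pin down $L_\bullet$ and to normalize $\phi$. The residue condition $\Res_{p_i}(\Phi)\in\bcoa_i=\{0\}$ (the non-degenerate elliptic row of the table in Theorem~\ref{thm:correspondence}), together with nilpotency, forces $\phi$ to be a parabolic morphism $L_\bullet\rightarrow L_\bullet^\vee\otimes K(P)$ without zeros, i.e.\ a parabolic isomorphism. The cleanest way to see this — and to read off the weight $w_L(p_i)=\{-\tfrac12\delta_i\}$ with $\delta_i=\th_i/2\pi-1$ — is the local model at a cone point, where $\dev_h$ behaves like $z\mapsto z^{\th_i/2\pi}$ and $(E_\bullet,\Phi)$ takes the standard nilpotent normal form (compare Example~\ref{example:log} and Simpson's local classification). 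This gives $L_\bullet^{\otimes 2}\cong K_S(P)$ as parabolic line bundles and lets me scale $\phi$ to $1$. I expect this local step to be the main obstacle: one must match the parabolic weight of $L_\bullet$ to the cone angle, handle the integer shifts absorbed into $\deg(L)$, and verify compatibility with the eigenvalue/weight dictionary of Theorem~\ref{thm:simpson-konno} (including the degenerate case $\th_i\in 2\pi\NN_+$, where $\bco_i=\{\id\}$ and $w_L(p_i)=0$).

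Finally I would confirm the answer by a parabolic degree computation. Taking a theta-characteristic $B$ with $B^{\otimes 2}\cong K_S$, the parabolic line bundle $L_\bullet:=B(-\tfrac12\bm{\delta}\cdot P)$ has parabolic degree $\deg(B)+\sum_i(-\tfrac12\delta_i)=(g-1)+\tfrac12\sum_i\bigl(1-\tfrac{\th_i}{2\pi}\bigr)$, so that $2\deg(L_\bullet)=(2g-2+n)-\sum_i\tfrac{\th_i}{2\pi}=-\chi(\dot S)-\sum_i\tfrac{\th_i}{2\pi}=e_{\bm{\ell}}$, in agreement with Lemma~\ref{lemma:chern}. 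Since $L_\bullet^{-2}=B^{-2}(\bm{\delta}\cdot P)=T_S(\bm{\delta}\cdot P)$, I get $L_\bullet^{-2}K(P)=T_S(\bm{\delta}\cdot P)\otimes K_S(P)\cong\Ocal_S(\sum_i\tfrac{\th_i}{2\pi}p_i)$, on which the normalized $\phi=1$ is the tautological parabolic isomorphism. This is precisely the asserted Higgs bundle $E_\bullet=L_\bullet^\vee\oplus L_\bullet$, $\Phi=\left(\begin{array}{cc}0&1\\0&0\end{array}\right)$, and by Proposition~\ref{prop:topology} its class in $\Higgs(S,\bm{w},2,\Ocal_S,\bCoa)(\RR)_{d,\bm{a}}$ is well defined exactly up to the $\Pic^0(S)[2]$-action, matching the statement.
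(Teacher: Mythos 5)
Your overall strategy is exactly the paper's: the paper's entire proof is the observation that the equivariant harmonic map attached to $\hol_h$ is the developing map itself (a local isometry, hence conformal for the $I$-conformal metric $h$, hence harmonic with vanishing Hopf differential), followed by ``Hitchin's computation''. Your first two paragraphs and your closing degree check carry this out correctly: conformality gives $\det(\Phi)=-\phi\psi=0$, Lemma \ref{lemma:real} gives $\phi\neq 0$, hence $\psi=0$ and $\Phi$ is nilpotent, and $2\deg(L_\bullet)=e_{\bm{\ell}}$ matches Lemma \ref{lemma:chern}.

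There is, however, one concrete misstep in your third paragraph, and it matters because that is the step meant to pin down $L_\bullet$. You assert that $\phi:L_\bullet\rar L_\bullet^\vee\otimes K(P)$ is ``without zeros, i.e.\ a parabolic isomorphism'' and that the local model gives $L_\bullet^{\otimes 2}\cong K_S(P)$. Both claims fail as soon as some $\th_i>0$: a parabolic isomorphism preserves parabolic degree, so $L_\bullet^{2}\cong K(P)$ would force $2\deg(L_\bullet)=-\chi(\dot S)$, i.e.\ $e_{\bm{\ell}}=-\chi(\dot S)$, which holds only in the all-cusps case; and it contradicts the corollary's own answer $L_\bullet^{2}=K_S(-\bm{\delta}\cdot P)$, whose parabolic degree is $e_{\bm{\ell}}=-\chi(\dot S)-\sum_i\th_i/2\pi$. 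The correct output of ``Hitchin's computation'' is that $\phi$ is essentially the $(1,0)$-derivative of the conformal harmonic map $\dev_h$, hence nowhere zero on $\dot S$ where $\dev_h$ is an immersion, while the local model $z\mapsto z^{\th_i/2\pi}$ shows that, as a section of the underlying bundle $\floor{L_\bullet^{-2}K(P)}=\Ocal_S\big(\sum_i\floor{\th_i/2\pi}\,p_i\big)$, it has divisor exactly $\sum_i\floor{\th_i/2\pi}\,p_i$ with parabolic weight $\{\th_i/2\pi\}$ at $p_i$. It is this divisor (together with $\bm{w}$ and $\bm{a}$) that reconstructs $\floor{L_\bullet^{-2}}$, hence $L_\bullet$ up to $\Pic^0(S)[2]$ via Proposition \ref{prop:topology}(c); your final paragraph only verifies the degree, which by itself does not determine the class. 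Once this local identification is stated correctly, your argument closes up and coincides with the paper's.
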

\begin{proof}
It is enough to notice that the harmonic metric
the $\RR^2$-bundle $\dot{V}\rar\dot{S}$ with monodromy $\hol_h$
is provided by the (equivariant) developing map
$(\wti{\dot{S}},\wti{h})\rar \HH^2=\SL_2(\RR)/\SO_2(\RR)$ itself,
and then follow Hitchin's computation.
\end{proof}

%
%
%
%
%
%
%
%


\bibliographystyle{amsplain}
\bibliography{parabolic-biblio}

\end{document}